\documentclass[11pt,a4paper,leqno]{article}
\usepackage{a4wide}
\setlength{\textheight}{23cm}
\setlength{\textwidth}{16cm}
\usepackage{latexsym}
\usepackage{amsmath}
\usepackage{amssymb}
\newtheorem{defin}{Definition}
\newtheorem{lemma}{Lemma}
\newtheorem{prop}{Proposition}
\newtheorem{theo}{Theorem}

\pagestyle{myheadings}
\newenvironment{proof}{\medskip\par\noindent{\bf Proof}}{\hfill $\Box$
\medskip\par}
\begin{document}
\title{On parametric Gevrey asymptotics for some nonlinear initial value Cauchy problems}
\author{{\bf A. Lastra\footnote{The author is partially supported by the project MTM2012-31439 of Ministerio de Ciencia e
Innovacion, Spain}, S. Malek\footnote{The author is partially supported by the french ANR-10-JCJC 0105 project and the PHC
Polonium 2013 project No. 28217SG.}}\\
University of Alcal\'{a}, Departamento de F\'{i}sica y Matem\'{a}ticas,\\
Ap. de Correos 20, E-28871 Alcal\'{a} de Henares (Madrid), Spain,\\
University of Lille 1, Laboratoire Paul Painlev\'e,\\
59655 Villeneuve d'Ascq cedex, France,\\
{\tt alberto.lastra@uah.es}\\
{\tt Stephane.Malek@math.univ-lille1.fr }}
\date{March, 7 2014}
\maketitle
\thispagestyle{empty}
{ \small \begin{center}
{\bf Abstract}
\end{center}
We study a nonlinear initial value Cauchy problem depending upon a complex perturbation parameter $\epsilon$ with vanishing initial
data at complex time $t=0$ and whose coefficients depend analytically on $(\epsilon,t)$ near the origin in
$\mathbb{C}^{2}$ and are bounded holomorphic on some horizontal strip in $\mathbb{C}$ w.r.t the space variable. This problem
is assumed to be non-Kowalevskian in time $t$, therefore analytic solutions at $t=0$ cannot be expected in general. Nevertheless,
we are able to construct a family of
actual holomorphic solutions defined on a common bounded open sector with vertex at 0 in time and on the given strip above in space,
when the complex parameter $\epsilon$ belongs to a suitably chosen set of open bounded sectors whose union form a covering of some
neighborhood $\Omega$ of 0 in $\mathbb{C}^{\ast}$. These solutions are achieved by means of Laplace and Fourier inverse
transforms of some common $\epsilon-$depending function on
$\mathbb{C} \times \mathbb{R}$, analytic near the origin and with exponential growth on some unbounded sectors
with appropriate bisecting directions in the first variable and exponential decay in the second, when the perturbation
parameter belongs to $\Omega$. Moreover, these solutions satisfy the remarkable property that the difference
between any two of them is exponentially flat for some
integer order w.r.t $\epsilon$. With the help of the classical Ramis-Sibuya theorem, we obtain the existence of a formal series
(generally divergent) in $\epsilon$ which is the common Gevrey asymptotic expansion of the built up actual solutions considered
above.\medskip

\noindent Key words: asymptotic expansion, Borel-Laplace transform, Fourier transform, Cauchy problem, formal power series,
nonlinear integro-differential equation, nonlinear partial differential equation, singular perturbation. 2000 MSC: 35C10, 35C20.}
\bigskip \bigskip

\section{Introduction}

In this paper, we consider a family of parameter depending nonlinear initial value Cauchy problems of the form
\begin{multline}
Q(\partial_{z})(\partial_{t}u(t,z,\epsilon)) = (Q_{1}(\partial_{z})u(t,z,\epsilon))(Q_{2}(\partial_{z})u(t,z,\epsilon))
+ \sum_{l=1}^{D} \epsilon^{\Delta_{l}}t^{d_l}\partial_{t}^{\delta_l}R_{l}(\partial_{z})u(t,z,\epsilon)\\
+ c_{0}(t,z,\epsilon)R_{0}(\partial_{z})u(t,z,\epsilon) + f(t,z,\epsilon) \label{ICP_main_intro}
\end{multline}
for given vanishing initial data $u(0,z,\epsilon) \equiv 0$, where $D \geq 2$, $\Delta_{l},d_{l},\delta_{l}$, $1 \leq l \leq D$ are integers
which satisfy the inequalities (\ref{assum_delta_l}), (\ref{assum_d_delta_Delta}) and (\ref{constraints_k_Borel_equation_for_u_p}).
Besides, $Q(X),Q_{1}(X),Q_{2}(X),R_{l}(X)$, $0 \leq l \leq D$ are polynomials submitted to the constraints
(\ref{assum_deg_Q_R}) and which possess the crucial analytic property (\ref{assum_Q_R_D}). The coefficient $c_{0}(t,z,\epsilon)$ and the
forcing term $f(t,z,\epsilon)$ are bounded holomorphic functions on a product $D(0,r) \times H_{\beta} \times D(0,\epsilon_{0})$,
where $D(0,r)$ (resp. $D(0,\epsilon_{0})$) is a disc centered at 0 with small radius $r>0$ (resp. $\epsilon_{0}>0$) and
$H_{\beta} \subset \mathbb{C}$ is some strip of width $\beta>0$ (see (\ref{strip_H_beta})).
In order to avoid cumbersome statements and to improve the readability of the computations, we have restricted our study to a
quadratic nonlinearity and monomial coefficients in $t$ in front of the derivatives with respect to $t$ and $z$ but the method
described here can also be extended to higher order nonlinearities, with polynomial coefficients w.r.t $t$ in the linear part on the
right handside of the equation (\ref{ICP_main_intro}).

This work can be seen as a continuation of the study described in \cite{ma2} where the second author has studied
nonlinear integro-differential initial values problems with the shape
\begin{equation}
R(\partial_{z})P(\partial_{t},\partial_{z})Y(t,z) = \int_{0}^{t} b(t-s,z)\partial_{z}^{s_0}Y(s,z) ds + \int_{0}^{t}
\partial_{z}^{s_1}Y(t-s,z)\partial_{z}^{s_2}Y(s,z) ds \label{int_diff_intro}
\end{equation}
where $R(X) \in \mathbb{C}[X]$, $P(T,X) \in \mathbb{C}[T,X]$ and $s_{0},s_{1},s_{2} \geq 0$ are non negative integers.
The coefficient $b(t,z) = \sum_{k \in I} b_{k}(z)t^{k}$ is a polynomial in $t$
and its coefficients $b_{k}(z)$ are Fourier inverse transform of some function $\mathfrak{b}_{k}(m)$ belonging to a Banach space
$E_{(\beta,\mu)}$ of exponentially decreasing continuous functions on $\mathbb{R}$ (see Definition 2) and define
bounded holomorphic functions on any strip $H_{\beta'}$, $0<\beta'<\beta$. The initial conditions are defined by
$Y(0,z) = Y_{0}(z)$, $(\partial_{t}^{j}Y)(0,z) \equiv 0$, for all $1 \leq j \leq \mathrm{deg}_{T}P(T,X)-1$,
where $Y_{0}$ is also assumed to be the Fourier inverse transform of some $\mathfrak{Y}_{0}(m)$ belonging to $E_{(\beta,\mu)}$.
We focused on the case when the degree of $R(X)P(T,X)$ with
respect to $T$ is smaller than its degree in $X$. In that case the classical Cauchy-Kowalevski theorem (see \cite{fol}) cannot be applied
and the unique formal power series solution $\hat{Y}(t,z) = \sum_{l \geq 0} Y_{l}(z)t^{l}$, with coefficients belonging to
the Banach space of bounded holomorphic functions on $H_{\beta'}$ equipped with the sup norm, is in general divergent.
Nevertheless, under suitable constraints on the roots of the polynomial $T \mapsto P(T^2,im)$ and for sufficiently small data
$||\mathfrak{b}_{k}||_{(\beta,\mu)}$, $||\mathfrak{Y}_{0}||_{(\beta,\mu)}$, one can construct by means of classical
Borel-Laplace procedure and Fourier inverse transform an actual holomorphic solution $Y(t,z)$ on
$\mathbb{C}_{+} \times H_{\beta'}$ of (\ref{int_diff_intro})
for the given initial data ($\mathbb{C}_{+}$ denotes the set of complex numbers $t$ such that $\mathrm{Re}(t) > 0$), which
possess the formal series $\hat{Y}$ as Gevrey asymptotic expansion of order 1 as $t$ tends to 0 (see Definition 6).

Compared to the work \cite{ma2}, the problem (\ref{ICP_main_intro}) now involves an additional complex parameter $\epsilon$.
Provided that $\delta_{D}+\mathrm{deg}(R_{D}) > \mathrm{deg}(Q)+1$ holds, the problem (\ref{ICP_main_intro}) is singularly
perturbed in the parameter $\epsilon$ and belongs to a class of so-called PDEs with irregular singularity at $t=0$ in the sense
of \cite{man2}. In the paper \cite{ma1}, the second author has already considered a similar problem of the form
\begin{equation}
 \epsilon t^{2} \partial_{t}\partial_{z}^{S}X_{p}(t,z,\epsilon) = F(t,z,\epsilon,\partial_{t},\partial_{z})X_{p}(t,z,\epsilon) +
P(t,z,\epsilon,X_{p}(t,z,\epsilon)) \label{SPCP_intro}
\end{equation}
for given initial data
\begin{equation}
(\partial_{z}^{j}X_{p})(t,0,\epsilon)=\phi_{j,p}(t,\epsilon) \ \ , \ \ 0 \leq p \leq \varsigma-1, 0 \leq j \leq S-1, \label{SPCP_init_cond_intro}
\end{equation}
where $S,\varsigma \geq 2$ are some positive integers, $F$ is some differential operator with polynomial coefficients and
$P$ a polynomial. The initial data $\phi_{j,p}(t,\epsilon)$
were assumed to be holomorphic on products $\mathcal{T} \times \mathcal{E}_{p} \subset \mathbb{C}^{2}$ for some sector
$\mathcal{T}$ centered at 0 and where $\underline{\mathcal{E}} = \{ \mathcal{E}_{p} \}_{0 \leq p \leq \varsigma-1}$
denotes a family of open bounded sectors with aperture larger than $\pi$ which form a so-called good covering in
$\mathbb{C}^{\ast}$ (see Definition 4). 
Under convenient assumptions on the shape of the equation (\ref{SPCP_intro}) and on the initial data (\ref{SPCP_init_cond_intro}),
the existence of a formal series $\hat{X}(\epsilon) = \sum_{k \geq 0} h_{k} \epsilon^{k}/k!$ solution of (\ref{SPCP_intro}) is
established with coefficients $h_{k}$
belonging to the Banach space $\mathbb{F}$ of bounded holomorphic functions on $\mathcal{T} \times D(0,\delta)$
(for some $\delta>0$ small enough) equipped with the sup norm. This formal series $\hat{X}(\epsilon)$ is the
Gevrey asymptotic expansion of order 1 of actual holomorphic solutions $X_{p}$ of (\ref{SPCP_intro}), (\ref{SPCP_init_cond_intro}) on
$\mathcal{E}_{p}$ as $\mathbb{F}-$valued functions, for all $0 \leq p \leq \varsigma-1$ (see Definition 6).

In this work we address the same queries as in \cite{ma1}, \cite{ma2}, namely our main purpose is the construction
of actual holomorphic solutions $u_{p}(t,z,\epsilon)$ to the problem (\ref{ICP_main_intro}) on domains
$\mathcal{T} \times H_{\beta'} \times \mathcal{E}_{p}$ using some Borel-Laplace procedure and Fourier inverse transform
and the analysis of their asymptotic expansions as $\epsilon$ tends to 0. More specifically, we can present our main statements
as follows.\medskip

\noindent {\bf Main results} {\it Let $k \geq 1$ be the integer such that $d_{D} = (\delta_{D}-1)(k+1)$ holds. We choose a family
$\underline{\mathcal{E}} = \{ \mathcal{E}_{p} \}_{0 \leq p \leq \varsigma-1}$ of sectors with aperture slightly larger than $\pi/k$
which defines a good covering of $\mathbb{C}^{\ast}$ and a set of adequate directions $\mathfrak{d}_{p} \in \mathbb{R}$,
$0 \leq p \leq \varsigma-1$ for which the constraints (\ref{root_cond_1_in_defin}), (\ref{root_cond_2_in_defin}) hold. We also take
an open bounded sector $\mathcal{T}$ centered at 0 such that for every $0 \leq p \leq \varsigma-1$, the product
$\epsilon t$ belongs to a sector with direction $\mathfrak{d}_{p}$ and aperture slightly larger than $\pi/k$, for all
$\epsilon \in \mathcal{E}_{p}$,
all $t \in \mathcal{T}$. We make the assumption that the coefficient $c_{0}(t,z,\epsilon)$ and the forcing term $f(t,z,\epsilon)$ can
be written as convergent series of the special form
$$ c_{0}(t,z,\epsilon) = \sum_{n \geq 0} c_{0,n}(z,\epsilon) (\epsilon t)^{n} \ \ , \ \
f(t,z,\epsilon) = \sum_{n \geq 1} f_{n}(z,\epsilon) (\epsilon t)^{n},$$
on a domain $D(0,r) \times H_{\beta'} \times D(0,\epsilon_{0})$ (where $H_{\beta'}$ is a strip of width $\beta'$)
such that $\mathcal{T} \subset D(0,r)$, $\cup_{0 \leq p \leq \varsigma-1} \mathcal{E}_{p} \subset D(0,\epsilon_{0})$ and
$0 < \beta' < \beta$ are given positive real numbers.
The coefficients $c_{0,0}(z,\epsilon)$, $c_{0,n}(z,\epsilon)$ and $f_{n}(z,\epsilon)$, $n \geq 1$, are supposed to be inverse Fourier
transform of functions $m \mapsto C_{0,0}(m,\epsilon)$, $m \mapsto C_{0,n}(m,\epsilon)$ and $m \mapsto F_{n}(m,\epsilon)$ that
belong to the Banach space $E_{(\beta,\mu)}$ for some $\mu > \max( \mathrm{deg}(Q_{1})+1, \mathrm{deg}(Q_{2})+1 )$
and that depend holomorphically on $\epsilon$ in $D(0,\epsilon_{0})$.

Our first result stated in Theorem 1 claims that if the norm
$||C_{0,0}(m,\epsilon)||_{(\beta,\mu)}$ and the radius $\epsilon_{0}$ are chosen small enough, then
we can construct a family of holomorphic bounded functions $u_{p}(t,z,\epsilon)$, $0 \leq p \leq \varsigma-1$, defined on the
products $\mathcal{T} \times H_{\beta'} \times \mathcal{E}_{p}$, which solves the problem
(\ref{ICP_main_intro}) with vanishing initial data $u_{p}(0,z,\epsilon) \equiv 0$ and
which can be written as Laplace-Fourier transform
$$ u_{p}(t,z,\epsilon) = \frac{k}{(2\pi)^{1/2}}\int_{-\infty}^{+\infty}
\int_{L_{\gamma_{p}}}
\omega_{k}^{\mathfrak{d}_p}(u,m,\epsilon) e^{-(\frac{u}{\epsilon t})^{k}} e^{izm} \frac{du}{u} dm.
$$
where the inner integration is made along some halfline $L_{\gamma_{p}} \subset S_{\mathfrak{d}_{p}}$ and
$S_{\mathfrak{d}_p}$ is an unbounded sector
with bisecting direction $\mathfrak{d}_{p}$, small aperture and where $\omega_{k}^{\mathfrak{d}_p}(u,m,\epsilon)$ denotes a
function with at most exponential growth of order $k$ in $u/\epsilon$ and exponential decay in $m \in \mathbb{R}$ which
satisfies the estimates
(\ref{|omega_k_d_p|<})}.

{\it Our second main result, described in Theorem 2, asserts that the functions $u_{p}$, $0 \leq p \leq \varsigma-1$,
turn out to be the $k-$sums (in the sense of Definition 6) on $\mathcal{E}_{p}$ of a common formal power series
$$ \hat{u}(t,z,\epsilon) = \sum_{m \geq 0} h_{m}(t,z) \frac{\epsilon^{m}}{m!} \in \mathbb{F}[[\epsilon]]$$
where $\mathbb{F}$ is the Banach space of bounded holomorphic functions on $\mathcal{T} \times H_{\beta'}$ equipped with the
sup norm.}\medskip

It is worth remarking that when $\mathrm{deg}(Q)+1 > \delta_{D} + \mathrm{deg}(R_{D})$, the equation
(\ref{ICP_main_intro}) is not singularly perturbed in $\epsilon$ and possess no irregular singularity at $t=0$. However, the
asymptotic expansion $\hat{u}$ of $u_{p}$ as $\epsilon$ tends to 0 on $\mathcal{E}_{p}$ remains divergent in general. The
reason for this phenomenon to appear relies on the way one constructs the actual solutions $u_{p}$ as Laplace transforms of
order $k$ in the new variable
$\epsilon t$ and from the fact that for any fixed $\epsilon \in D(0,\epsilon_{0}) \setminus \{ 0 \}$, the problem
(\ref{ICP_main_intro}) is not Kowalevskian with respect to $t$ at 0 (meaning that formal series solutions
$\hat{v}(t,z,\epsilon) = \sum_{n \geq 1} v_{n}(z,\epsilon)t^{n}$, with coefficients $z \mapsto v_{n}(z,\epsilon)$ bounded holomorphic
on $H_{\beta'}$, are in general divergent, as a consequence of Propositions 8 and 9) as it was already the case in our
previous paper \cite{ma2}.\medskip
 
The Cauchy problem (\ref{ICP_main_intro}) we consider here comes within the new trend of research concerning Borel-Laplace
summability procedures applied to partial differential differential equation going back to the seminal work of D. Lutz, M. Miyake
and R. Sch\"{a}fke on the linear complex heat equation, see \cite{lumisc}. We quote below some important results in this field not
pretending to be exhaustive. This construction of Borel-Laplace $k-$summable or even
multi-summable formal series solutions has been extended to general linear PDEs in two complex variables with constant
coefficients by W. Balser in \cite{ba3} and \cite{ba4} provided that their initial data are analytic functions near the origin that can be
analytically continued with exponential growth on some unbounded sectors. A similar result has been obtain for the so-called
fractional linear PDEs with non-integer derivatives by S. Michalik, see \cite{mi}. Latter on, linear complex heat like equations
with variable coefficients have been explored by several authors, see \cite{balo}, \cite{copata}, \cite{ly2}.
Recently, general linear PDEs with time dependent coefficients taking for granted that their initial data
are entire functions in $\mathbb{C}^{N}$, $N \geq 1$, have been investigated by H. Tahara and H. Yamazawa in \cite{taya}.
In the context of nonlinear PDEs, we mention the work \cite{ly1} of G. Lysik who constructed summable formal solutions of the
one dimensional Burgers equations with the help of the so-called Cole-Hopf transform. We also point out that
O. Costin and S. Tanveer have constructed summable formal series in time variable to the celebrated 3D Navier Stokes
equations in \cite{cota2}. We also refer to the work of S. Ouchi who constructed multisummable formal solutions to
nonlinear PDEs which come from perturbations of ordinary differential equations, see \cite{ou}. We also mention the fact that,
these last years, a lot of attention has been payed to singularly perturbed PDEs in the complex domain partly motived by a
conjecture of B. Dubrovin which concerns the question of universal behaviour of generic solutions near gradient
catastrophe of singularly Hamiltonian perturbations of first order hyperbolic equations, see \cite{du}. In this active direction,
we refer namely to the works of B. Dubrovin and M. Elaeva who investigated the case of generalized Burgers equations in \cite{duel} and
of T. Claeys and T. Grava in \cite{clgr} who solved the problem for KdV equations. We indicate the recent important studies of
T. Koike on Garnier systems, \cite{ko1}, \cite{ko2} and of S. Hirose on the reduction of general singularly perturbed holonomic
systems in two complex variables to Pearcy systems normal forms, \cite{hi}.\medskip

In the sequel, we explain our principal intermediate key results and the arguments needed in their proofs. 
In a first part, we depart from an auxiliary parameter depending initial value differential and convolution equation which is singular
in its perturbation parameter $\epsilon$ at 0, see (\ref{SCP}). This equation is formally constructed by making the change of
variable $T=\epsilon t$ in the equation (\ref{ICP_main_intro}) (as done in our previous works \cite{ma1}, \cite{lamasa1}) and
by taking the Fourier transform with respect to the variable $z$. We construct a formal power series solution
$\hat{U}(T,m,\epsilon) = \sum_{n \geq 1} U_{n}(m,\epsilon)T^{n}$ of (\ref{SCP}) whose coefficients
$m \mapsto U_{n}(m,\epsilon)$ depend holomorphically on $\epsilon \in \mathbb{C}^{\ast}$ near the origin and belong
to a Banach space $E_{(\beta,\mu)}$ of continuous function with exponential decay on $\mathbb{R}$ introduced in
the paper \cite{cota2} by O. Costin and S. Tanveer. This series turns out to be in general divergent as we will see below.

In the next step, we follow the strategy developped recently by H. Tahara and H. Yamazawa in \cite{taya}, namely we multiply
each hand side of (\ref{SCP}) by the power $T^{k}$ which transforms it into an equation (\ref{SCP_irregular}) which involves only
differential operators in $T$ of irregular type at $T=0$ of the form $T^{\beta}\partial_{T}$ with $\beta \geq k+1$
due to our assumption (\ref{assum_dl_delta_l_Delta_l}) on the shape of the equation (\ref{SCP}).

Then, we apply a formal Borel transform of order $k$ (defined as a slightly modified version of the classical Borel transform of
order $k$ from the reference book \cite{ba}), that we call $m_{k}-$Borel transform in Definition 3, to the formal series
$\hat{U}$ with respect to $T$, denoted
$$ \omega_{k}(\tau,m,\epsilon) = \sum_{n \geq 1} U_{n}(m,\epsilon) \frac{\tau^k}{\Gamma(\frac{n}{k})}. $$
From the commutation rules of the $m_{k}-$Borel transform with respect to the weighted convolution product $\star$
of formal series (introduced in Proposition 5) and the differential operators $T^{\beta}\partial_{T}$ for $\beta \geq k+1$ described in
Proposition 6, we get that $\omega_{k}(\tau,m,\epsilon)$ formally solves
a convolution equation in both variables $\tau$ and $m$, see (\ref{k_Borel_equation}).

Under some size constraint on the $E_{(\beta,\mu)}-$norm of the constant term $C_{0,0}$ of one
coefficient of the equation (\ref{k_Borel_equation}) and for all $\epsilon \in \mathbb{C}^{\ast}$ close enough to 0,
we show that $\omega_{k}(\tau,m,\epsilon)$ is actually convergent for $\tau$
on some fixed neighborhood of 0 and can be extended to a holomorphic functions $\omega_{k}^{d}(\tau,m,\epsilon)$
on unbounded sectors $S_{d}$ centered at zero with bisecting direction $d$ and tiny aperture provided that $S_{d}$ stays away from
the roots of some polynomial $P_{m}(\tau)$, for all $m \in \mathbb{R}$. Besides, the function
$\omega_{k}^{d}(\tau,m,\epsilon)$ satisfies estimates of the form : there exist constants $\nu>0$ and $\varpi_{d}>0$ with
$$
|\omega_{k}^{d}(\tau,m,\epsilon)| \leq \varpi_{d}(1+|m|)^{-\mu}e^{-\beta|m|}
\frac{ |\frac{\tau}{\epsilon}|}{1 + |\frac{\tau}{\epsilon}|^{2k}} \exp( \nu |\frac{\tau}{\epsilon}|^{k})
$$
for all $\tau \in S_{d}$, $m \in \mathbb{R}$, all $\epsilon \in \mathbb{C}^{\ast}$ near the origin (see Proposition 9). The proof
rests on a fixed point argument in some Banach space of
holomorphic functions $F_{(\nu,\beta,\mu,k,\epsilon)}^{d}$ studied in Section 2. It is worth noting that the formal series
$\hat{U}(T,m,\epsilon)$ diverges since the function $\omega_{k}(\tau,m,\epsilon)$ cannot in general be extended
everywhere on $\mathbb{C}$ w.r.t $\tau$. But, as a result, we get that these series $\hat{U}$ are $m_{k}-$summable w.r.t
$T$ (see Definition 3) in all the directions $d$ chosen as above. In other words, some Laplace transform of
order $k$ of $\omega_{k}^{d}$ denoted $U^{d}(T,m,\epsilon)$ can be constructed for all $T$ belonging to a sector
$S_{d,k,h|\epsilon|}$ with bisecting direction $d$, aperture slightly larger than $\pi/k$ and radius $h|\epsilon|$ (for some $h>0$). This
function $T \mapsto U^{d}(T,m,\epsilon)$ is the unique $E_{(\beta,\mu)}-$valued map which admits $\hat{U}(T,m,\epsilon)$ as
Gevrey asymptotic expansion of order $1/k$ on $S_{d,k,h|\epsilon|}$. Moreover, $U^{d}(T,m,\epsilon)$ solves the auxiliary
problem (\ref{SCP}) with vanishing initial data $U^{d}(0,m,\epsilon)$, see Proposition 10.

In Theorem 1, we construct a family of actual bounded holomorphic solutions $u_{p}(t,z,\epsilon)$, $0 \leq p \leq \varsigma-1$
of our original problem (\ref{ICP_main_intro}) on domains of the form $\mathcal{T} \times H_{\beta'} \times \mathcal{E}_{p}$.
The sectors $\mathcal{E}_{p}$, $0 \leq p \leq \varsigma-1$ constitute a so-called good covering in $\mathbb{C}^{\ast}$
(Definition 4). The strip $H_{\beta'}$ has width $0<\beta'<\beta$ and $\mathcal{T}$ is a fixed bounded sector centered at 0
which fulfills the constraint $\epsilon t \in S_{\mathfrak{d}_{p},k}$ for all $\epsilon \in \mathcal{E}_{p}$, $t \in \mathcal{T}$,
and $S_{\mathfrak{d}_{p},k}$ is a sector of bisecting direction $\mathfrak{d}_{p}$ and
aperture slightly larger than $\pi/k$ where $\mathfrak{d}_{p}$ are suitable directions for which the unbounded sectors
$S_{\mathfrak{d}_{p}}$ with small aperture and bisecting direction $\mathfrak{d}_{p}$ satisfy the restrictions described above.
Namely, the functions $u_{p}$ are set as Fourier inverse transforms of $U^{\mathfrak{d}_{p}}$,
$$ u_{p}(t,z,\epsilon) = \mathcal{F}^{-1}(m \mapsto U^{\mathfrak{d}_{p}}(\epsilon t,m,\epsilon) )(z) $$
where the definition of $\mathcal{F}^{-1}$ is pointed out in Proposition 7. In addition to that, one can prove that the difference of any
two neighboring functions $u_{p+1}(t,z,\epsilon)-u_{p}(t,z,\epsilon)$ tends to zero as $\epsilon \rightarrow 0$ on
$\mathcal{E}_{p} \cap \mathcal{E}_{p+1}$ faster than a function with exponential decay of order $k$, uniformly w.r.t.
$t \in \mathcal{T}$ and $z \in H_{\beta'}$, see (\ref{exp_small_difference_u_p}).

The last section of the paper is devoted to deal with this latter growth information in order to show the existence of a common
asymptotic expansion
$\hat{u}(t,z,\epsilon) = \sum_{m \geq 0} h_{m}(t,z) \epsilon^{m}/m!$ of Gevrey order $1/k$
for all the functions $u_{p}(t,z,\epsilon)$ as $\epsilon$ tends to 0 on $\mathcal{E}_{p}$, uniformly w.r.t.
$t \in \mathcal{T}$ and $z \in H_{\beta'}$, see Theorem 2. The key tool in proving the result is the classical
Ramis-Sibuya theorem (Theorem (RS)).\bigskip

\noindent The layout of this work reads as follows.\\
In Section 2, we define some weighted parameter depending Banach spaces of continuous functions on
$\mathbb{C} \times \mathbb{R}$ with exponential growth on sectors w.r.t the first variable and exponential decay on $\mathbb{R}$
w.r.t the second one. We study the continuity properties of several kind of linear and nonlinear integral operators acting on these spaces 
that will be useful in Section 4.\\
In Section 3, we give a definition of $k-$summability (that we call $m_{k}-$summability) which is a minor modification of the classical one
given in the textbook \cite{ba} and which is appropriate for the problem we have to deal with. We also give conditions for the set of
$m_{k}-$sums of formal series to be a differential algebra. This fact will be important in the next section where we construct
actual solutions
of the auxiliary equation (\ref{SCP}). We provide explicit commutation formulas for the $m_{k}-$Borel transform w.r.t products
and differential operators of irregular type.\\
In Section 4, we introduce an auxiliary differential and convolution problem (\ref{SCP}) for which we construct a formal solution.
We show that the $m_{k}-$Borel transform of this formal solution satisfies a convolution problem (\ref{k_Borel_equation}).
Under suitable assumptions, we can solve uniquely this latter problem in the Banach spaces described in Section 2
using some fixed point theorem argument. Then, applying Laplace transform, we can give a uniquely determined actual solution
to (\ref{SCP}) having the formal solution mentioned above as Gevrey asymptotic expansion.\\
In Section 5, with the help of Section 4, we build a family of actual holomorphic solutions to our initial Cauchy
problem (\ref{ICP_main_intro}) on a full neighborhood of the origin in $\mathbb{C}^{\ast}$ w.r.t the perturbation parameter
$\epsilon$. We show that the difference of any two neighboring solutions is exponentially flat for some integer order in
$\epsilon$ (Theorem 1).\\
In Section 6, we show that the actual solutions constructed in Section 5 share a common formal series as
Gevrey asymptotic expansion as $\epsilon$ tends to 0 on sectors (Theorem 2). The result relies on the classical so-called
Ramis-Sibuya theorem.

\section{Banach spaces functions with exponential growth and decay}

We denote by $D(0,r)$ the open disc centered at $0$ with radius $r>0$ in $\mathbb{C}$ and by $\bar{D}(0,r)$ its closure. Let 
$S_{d}$ be an open unbounded sector in direction $d \in \mathbb{R}$ and $\mathcal{E}$ be an
open sector with finite radius $r_{\mathcal{E}}$, both centered at $0$ in $\mathbb{C}$. By
convention, these sectors do not contain the origin in $\mathbb{C}$.

\begin{defin} Let $\nu,\beta,\mu>0$ and $\rho>0$ be positive real numbers. Let $k \geq 1$ be an integer and
let $\epsilon \in \mathcal{E}$. We denote
$F_{(\nu,\beta,\mu,k,\epsilon)}^{d}$ the vector space of continuous functions $(\tau,m) \mapsto h(\tau,m)$ on
$(\bar{D}(0,\rho) \cup S_{d}) \times \mathbb{R}$, which are holomorphic with respect to $\tau$ on $D(0,\rho) \cup S_{d}$ and such that
$$ ||h(\tau,m)||_{(\nu,\beta,\mu,k,\epsilon)} =
\sup_{\tau \in \bar{D}(0,\rho) \cup S_{d},m \in \mathbb{R}} (1+|m|)^{\mu}
\frac{1 + |\frac{\tau}{\epsilon}|^{2k}}{|\frac{\tau}{\epsilon}|}\exp( \beta|m| - \nu|\frac{\tau}{\epsilon}|^{k} ) |h(\tau,m)|$$
is finite. One can check that the normed space
$(F_{(\nu,\beta,\mu,k,\epsilon)}^{d},||.||_{(\nu,\beta,\mu,k,\epsilon)})$ is a Banach space.
\end{defin}
{\bf Remark:} These norms are appropriate modifications of the norms defined by O. Costin and S. Tanveer in \cite{cota2} and
by the second the author in \cite{ma1} and \cite{ma2}.\medskip

Throughout the whole section, we assume $\epsilon \in \mathcal{E}$, $\mu,\beta,\nu>0$ are fixed. In the next lemma, we check
the continuity property by multiplication operation with bounded functions.

\begin{lemma} Let $(\tau,m) \mapsto a(\tau,m)$ be a bounded continuous function on
$(\bar{D}(0,\rho) \cup S_{d}) \times \mathbb{R}$, which is holomorphic with respect to $\tau$ on $D(0,\rho) \cup S_{d}$. Then, we have
\begin{equation}
|| a(\tau,m) h(\tau,m) ||_{(\nu,\beta,\mu,k,\epsilon)} \leq
\left( \sup_{\tau \in \bar{D}(0,\rho) \cup S_{d},m \in \mathbb{R}} |a(\tau,m)| \right)
||h(\tau,m)||_{(\nu,\beta,\mu,k,\epsilon)}
\end{equation}
for all $h(\tau,m) \in F_{(\nu,\beta,\mu,k,\epsilon)}^{d}$.
\end{lemma}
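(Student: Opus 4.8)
The plan is to exploit the fact that the weight defining $||\cdot||_{(\nu,\beta,\mu,k,\epsilon)}$ is independent of $a$, so that multiplication by $a$ only rescales the quantity under the supremum by the pointwise factor $|a(\tau,m)|$. First I would record that the product $a(\tau,m)h(\tau,m)$ again belongs to $F^{d}_{(\nu,\beta,\mu,k,\epsilon)}$: it is continuous on $(\bar{D}(0,\rho)\cup S_{d})\times\mathbb{R}$ and holomorphic with respect to $\tau$ on $D(0,\rho)\cup S_{d}$, since products of continuous (resp.\ holomorphic) functions retain these properties, while the finiteness of its norm follows from the estimate derived below.

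The core of the argument is a one-line factorisation inside the supremum. For every fixed $(\tau,m)\in(\bar{D}(0,\rho)\cup S_{d})\times\mathbb{R}$ I would write
$$(1+|m|)^{\mu}\frac{1+|\tfrac{\tau}{\epsilon}|^{2k}}{|\tfrac{\tau}{\epsilon}|}\exp(\beta|m|-\nu|\tfrac{\tau}{\epsilon}|^{k})\,|a(\tau,m)h(\tau,m)| = |a(\tau,m)|\,(1+|m|)^{\mu}\frac{1+|\tfrac{\tau}{\epsilon}|^{2k}}{|\tfrac{\tau}{\epsilon}|}\exp(\beta|m|-\nu|\tfrac{\tau}{\epsilon}|^{k})\,|h(\tau,m)|,$$
using the elementary identity $|ah|=|a|\,|h|$. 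Bounding $|a(\tau,m)|$ by $M:=\sup_{\tau\in\bar{D}(0,\rho)\cup S_{d},\,m\in\mathbb{R}}|a(\tau,m)|$, which is finite by the boundedness hypothesis on $a$, the right-hand side is at most $M$ times the weighted modulus of $h$ at the same point.

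Taking the supremum over all $(\tau,m)$ on both sides and pulling the constant $M$ out of the supremum then yields $||a(\tau,m)h(\tau,m)||_{(\nu,\beta,\mu,k,\epsilon)}\le M\,||h(\tau,m)||_{(\nu,\beta,\mu,k,\epsilon)}$, which is exactly the claimed inequality. I do not expect any genuine obstacle here: this is the standard statement that multiplication by a bounded holomorphic symbol defines a bounded operator on a sup-type weighted space, and the whole proof reduces to applying the pointwise bound $|a|\le M$ underneath the supremum; the only routine verifications are the continuity and holomorphy of $ah$ and the finiteness of its norm, both of which are immediate from the above.
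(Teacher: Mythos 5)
Your proof is correct and is exactly the standard argument; the paper in fact states Lemma 1 without any proof at all, evidently regarding the pointwise factorisation $|ah|=|a|\,|h|$ followed by bounding $|a|$ by its supremum as immediate. Nothing further is needed.
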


In the next proposition, we study the continuity property of some convolution operators acting on the latter Banach spaces.

\begin{prop} Let $\gamma_{2}>0$ be a real number. Let $k \geq 1$ be an integer such that $1/k \leq \gamma_{2} \leq 1$.
Then, there exists a constant $C_{1}>0$ (depending on $\nu,k,\gamma_{2}$) with
\begin{equation}
||\int_{0}^{\tau^k} (\tau^{k}-s)^{\gamma_2}f(s^{1/k},m) \frac{ds}{s}
||_{(\nu,\beta,\mu,k,\epsilon)} \leq C_{1}|\epsilon|^{k \gamma_{2}} ||f(\tau,m)||_{(\nu,\beta,\mu,k,\epsilon)}
\label{conv_op_continuity_1}
\end{equation}
for all $f(\tau,m) \in F_{(\nu,\beta,\mu,k,\epsilon)}^{d}$.
\end{prop}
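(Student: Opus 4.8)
The plan is to estimate the weighted sup-norm of the convolution operator directly by inserting the definition of the norm $||\cdot||_{(\nu,\beta,\mu,k,\epsilon)}$ and reducing everything to a single-variable estimate in the radial variable $\tau/\epsilon$. First I would write $g(\tau,m) = \int_{0}^{\tau^{k}} (\tau^{k}-s)^{\gamma_{2}} f(s^{1/k},m)\,\frac{ds}{s}$ and bound $|f(s^{1/k},m)|$ from above using the definition of its norm: for every $s$ on the segment from $0$ to $\tau^{k}$ we have
\[
|f(s^{1/k},m)| \leq ||f||_{(\nu,\beta,\mu,k,\epsilon)}\,(1+|m|)^{-\mu} e^{-\beta|m|}
\frac{|s^{1/k}/\epsilon|}{1+|s^{1/k}/\epsilon|^{2k}} \exp\!\bigl(\nu|s^{1/k}/\epsilon|^{k}\bigr).
\]
The factors $(1+|m|)^{-\mu} e^{-\beta|m|}$ pull out cleanly and cancel against the same factors appearing when I multiply $|g(\tau,m)|$ by the weight $(1+|m|)^{\mu} e^{\beta|m|}$. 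So the proposition reduces to a purely $\tau$-dependent inequality, and the constant $C_{1}$ must come out independent of $m$, which it does since $m$ has been entirely factored away.

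Next I would parametrize the inner integral by setting $s = \tau^{k} x$ with $x$ running over $[0,1]$ along a straight segment (using that $f$ is holomorphic on $D(0,\rho)\cup S_{d}$ so the integral is path-independent and can be taken along the ray toward $\tau^{k}$). Writing $A = |\tau/\epsilon|^{k}$, the bound on $|g|$ becomes, after noting $|s^{1/k}/\epsilon|^{k} = A\,|x|$ and $\frac{ds}{s} = \frac{dx}{x}$,
\[
|g(\tau,m)| \leq ||f||\,(1+|m|)^{-\mu} e^{-\beta|m|}\; |\tau|^{k\gamma_{2}} |\epsilon|^{k\gamma_{2}} A^{-\gamma_{2}}
\int_{0}^{1} (1-x)^{\gamma_{2}} \frac{A^{1/k}|x|^{1/k}}{1+A^{2}|x|^{2}} \exp(\nu A|x|)\,\frac{dx}{|x|},
\]
where I have used $|\tau^{k}-s|^{\gamma_{2}} = |\tau|^{k\gamma_{2}}(1-x)^{\gamma_{2}}$. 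Multiplying by the outer weight $\frac{1+A^{2}}{A^{1/k}}\exp(-\nu A)$ that defines $||g||$, the whole task collapses to showing that
\[
\sup_{A>0}\;\frac{1+A^{2}}{1+A^{2}x^{2}}\,\exp\!\bigl(-\nu A(1-x)\bigr)
\]
is integrable against $(1-x)^{\gamma_{2}} x^{1/k-1}\,dx$ on $[0,1]$ with a finite bound depending only on $\nu,k,\gamma_{2}$; the surviving power $|\epsilon|^{k\gamma_{2}}$ then gives exactly the claimed prefactor.

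The main obstacle, and the step I would spend the most care on, is the $A$-uniform control of the kernel $\frac{1+A^{2}}{1+A^{2}x^{2}}\exp(-\nu A(1-x))$. The difficulty is the competition near $x=1$ between the algebraic growth $\frac{1+A^{2}}{1+A^{2}x^{2}}\to 1$ and the exponential $\exp(-\nu A(1-x))$, and separately the behaviour for small $x$ where $\frac{1+A^{2}}{1+A^{2}x^{2}}$ can be as large as order $A^{2}$ but is then damped by $\exp(-\nu A)$. I expect to split the range of $A$ (say $A\le 1$ and $A\ge 1$) and, on the large-$A$ region, to bound $\sup_{A\ge 1}A^{j}\exp(-\nu A(1-x))$ by a constant times $(1-x)^{-j}$ for the relevant exponents $j$, then check that the resulting singularity at $x=1$ is integrable because $\gamma_{2}>0$ supplies the compensating factor $(1-x)^{\gamma_{2}}$; near $x=0$ the factor $x^{1/k-1}$ is integrable since $k\ge 1$. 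Collecting the two regimes yields a finite constant $C_{1}$ depending only on $\nu,k,\gamma_{2}$, completing the estimate. The hypothesis $1/k\le\gamma_{2}\le 1$ is what guarantees these exponents land in the integrable range, so I would keep track of exactly where each inequality in $1/k\le\gamma_{2}\le 1$ is consumed.
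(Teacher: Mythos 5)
Your overall strategy is the same as the paper's: insert the definition of $||f||_{(\nu,\beta,\mu,k,\epsilon)}$ under the integral, factor out the $m$-dependence, rescale the integration variable by $|\epsilon|^{k}$ to extract the power of $|\epsilon|$, and reduce everything to a one-variable supremum in $A=|\tau/\epsilon|^{k}$. However, there is a concrete algebra error at the decisive step. From $(\tau^{k}-s)^{\gamma_{2}}=\tau^{k\gamma_{2}}(1-x)^{\gamma_{2}}$ and $|\tau|^{k}=|\epsilon|^{k}A$ one gets $|\tau|^{k\gamma_{2}}=|\epsilon|^{k\gamma_{2}}A^{+\gamma_{2}}$, whereas your display carries the prefactor $|\tau|^{k\gamma_{2}}|\epsilon|^{k\gamma_{2}}A^{-\gamma_{2}}$, which equals $|\epsilon|^{2k\gamma_{2}}$. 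Taken literally this would prove the bound with $|\epsilon|^{2k\gamma_{2}}$, which is false as $\epsilon\to 0$ (the correct power is $k\gamma_{2}$); correspondingly, the quantity you then propose to control, $\sup_{A>0}\int_{0}^{1}(1-x)^{\gamma_{2}}x^{\frac{1}{k}-1}\frac{1+A^{2}}{1+A^{2}x^{2}}e^{-\nu A(1-x)}dx$, is missing a factor $A^{\gamma_{2}}$ in front of the integral.

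The omission is not cosmetic, because $A^{\gamma_{2}}$ is exactly what makes the endpoint $x=1$ borderline: $\sup_{A>0}A^{\gamma_{2}}e^{-\nu A(1-x)}=C_{\nu,\gamma_{2}}(1-x)^{-\gamma_{2}}$, and this singularity is absorbed precisely by the weight $(1-x)^{\gamma_{2}}$; without it there is no singularity at $x=1$ at all and the role of $\gamma_{2}$ evaporates. Once you restore $A^{\gamma_{2}}$, your plan does close: for $x\leq 1/2$ the damping $e^{-\nu A/2}$ kills $A^{\gamma_{2}}(1+A^{2})$ and $x^{\frac{1}{k}-1}$ is integrable; for $x\geq 1/2$ you must bound $\frac{1+A^{2}}{1+A^{2}x^{2}}$ by the constant $4$ rather than by $1+A^{2}$ (otherwise you would face a non-integrable $(1-x)^{-2}$), and then the cancellation $(1-x)^{\gamma_{2}}(1-x)^{-\gamma_{2}}$ finishes the job. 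Note that the corrected argument uses neither $\gamma_{2}\leq 1$ nor $\gamma_{2}\geq 1/k$; it is in fact slightly sharper than the paper's proof, which discards the coupling between $(x-h)^{\gamma_{2}}$ and the exponential by bounding $(x-h)^{\gamma_{2}}\leq x^{\gamma_{2}}$ and then genuinely needs $\gamma_{2}\leq 1$ in a L'Hospital computation at infinity.
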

\begin{proof} Let $f(\tau,m) \in F_{(\nu,\beta,\mu,k,\epsilon)}^{d}$. For any $\tau \in \bar{D}(0,\rho) \cup S_{d}$, the segment
$[0,\tau^{k}]$ is such that the map $s \in [0,\tau^{k}] \rightarrow f(s^{1/k},m)$ is well defined, provided that $m \in \mathbb{R}$.
By definition, we have that
\begin{multline}
|| \int_{0}^{\tau^k} (\tau^{k}-s)^{\gamma_2}f(s^{1/k},m) \frac{ds}{s}
||_{(\nu,\beta,\mu,k,\epsilon)} \\ = \sup_{\tau \in \bar{D}(0,\rho) \cup S_{d},m \in \mathbb{R}} (1+|m|)^{\mu}
\frac{1 + |\frac{\tau}{\epsilon}|^{2k}}{|\frac{\tau}{\epsilon}|}\exp( \beta|m| - \nu|\frac{\tau}{\epsilon}|^{k} )\\
\times | \int_{0}^{\tau^k} \{ (1+|m|)^{\mu} e^{\beta |m|}
\exp( -\nu |s|/|\epsilon|^{k} )
\frac{ 1 + \frac{|s|^2}{|\epsilon|^{2k}}}{ \frac{|s|^{1/k}}{|\epsilon|}}f(s^{1/k},m) \}\\
\times \mathcal{A}(\tau,s,m,\epsilon) ds|
\end{multline}
where
$$
\mathcal{A}(\tau,s,m,\epsilon) = \frac{1}{(1+|m|)^{\mu}} e^{-\beta |m|}
\frac{\exp( \nu |s|/|\epsilon|^{k})}{1 + \frac{|s|^2}{|\epsilon|^{2k}}} \frac{|s|^{1/k}}{|\epsilon|}
(\tau^{k}-s)^{\gamma_2}\frac{1}{s}
$$
Therefore,
\begin{equation}
|| \int_{0}^{\tau^k} (\tau^{k}-s)^{\gamma_2}f(s^{1/k},m) \frac{ds}{s}
||_{(\nu,\beta,\mu,k,\epsilon)} \leq C_{1}(\epsilon) ||f(\tau,m)||_{(\nu,\beta,\mu,k,\epsilon)} \label{continuity_op}
\end{equation}
where
$$
 C_{1}(\epsilon) = \sup_{\tau \in \bar{D}(0,\rho) \cup S_{d}}
\frac{1 + |\frac{\tau}{\epsilon}|^{2k}}{|\frac{\tau}{\epsilon}|} \exp( -\nu |\frac{\tau}{\epsilon}|^{k} )\\
\times \int_{0}^{|\tau|^k}
\frac{\exp( \nu h/|\epsilon|^{k})}{1+\frac{h^2}{|\epsilon|^{2k}}}
\frac{h^{\frac{1}{k}-1}}{|\epsilon|} (|\tau|^{k}-h)^{\gamma_2} dh
$$
Making the change of variable $h=|\epsilon|^{k}h'$ in the integral inside $C_{1}(\epsilon)$ yields
\begin{multline}
C_{1}(\epsilon) = |\epsilon|^{k \gamma_{2}}
\sup_{\tau \in \bar{D}(0,\rho) \cup S_{d}}
\frac{1 + |\frac{\tau}{\epsilon}|^{2k}}{|\frac{\tau}{\epsilon}|}
\exp( -\nu |\frac{\tau}{\epsilon}|^{k} )\\
\times \int_{0}^{|\frac{\tau}{\epsilon}|^k} \frac{ \exp( \nu h' ) }{ 1 + h'^{2}} (h')^{\frac{1}{k}-1}
(|\frac{\tau}{\epsilon}|^{k} - h')^{\gamma_{2}} dh'
\leq |\epsilon|^{k \gamma_{2}} \sup_{x \geq 0} A(x) \label{C1_chg_var}
\end{multline}
where
$$ A(x) = \frac{1 + x^2}{x^{1/k}} \exp( -\nu x )
\int_{0}^{x} \frac{\exp( \nu h )}{1 + h^2} h^{\frac{1}{k}-1} (x-h)^{\gamma_2} dh
$$
For any $x>0$, we have $A(x) \leq \tilde{A}(x)$, where
$$ \tilde{A}(x) = (1+x^2)x^{\gamma_{2} - \frac{1}{k}}
\exp( - \nu x )\int_{0}^{x} \frac{\exp( \nu h )}{1 + h^2} h^{\frac{1}{k}-1} dh.
$$
Using L'Hospital rule, we know that
\begin{multline*}
\lim_{x \rightarrow +\infty} \tilde{A}(x) =
\lim_{x \rightarrow +\infty}
\frac{ \exp( \nu x )x^{\frac{1}{k}-1}/(1+x^2) }{ \partial_{x}(
\frac{ \exp( \nu x)}{(1+x^2)x^{\gamma_{2}-\frac{1}{k}}}) } \\
= \lim_{x \rightarrow +\infty} \frac{ (1+x^2)x^{2(\gamma_{2}-\frac{1}{k})}x^{\frac{1}{k}-1} }{\nu
(1+x^2)x^{\gamma_{2}-\frac{1}{k}} - (2x^{\gamma_{2}-\frac{1}{k}+1} +
(\gamma_{2}-\frac{1}{k})x^{\gamma_{2}-\frac{1}{k}-1}(1+x^2)) }
\end{multline*}
and this latter limit is finite if $\gamma_{2} \leq 1$ holds. Hence, we deduce that there exists a constant $\tilde{A}>0$
such that
\begin{equation}
\sup_{x \geq 0} \tilde{A}(x) \leq \tilde{A}
\label{tilde_A_1_bounded}
\end{equation} 
Gathering the estimates (\ref{continuity_op}), (\ref{C1_chg_var}), (\ref{tilde_A_1_bounded}), we see that
(\ref{conv_op_continuity_1}) holds.
\end{proof}

\begin{prop} Let $\gamma_{1} \geq 0$ and $\chi_{2}>-1$ be real numbers. Let $\nu_{2} \geq 0$ be an integer.
We consider a holomorphic function $a_{\gamma_{1},k}(\tau)$ on $D(0,\rho) \cup S_{d}$, continuous on
$\bar{D}(0,\rho) \cup S_{d}$, such that
$$ |a_{\gamma_{1},k}(\tau)| \leq \frac{1}{(1+|\tau|^{k})^{\gamma_1}} $$
for all $\tau \in \bar{D}(0,\rho) \cup S_{d}$.\medskip

\noindent {\bf i)} Assume that $\chi_{2} \geq 0$.\medskip

\noindent If $\nu_{2} + \chi_{2} - \gamma_{1} \leq 0$, then there exists a constant $C_{2.1}>0$ (depending on
$\nu,\nu_{2},\chi_{2},\gamma_{1}$) such that
\begin{multline}
|| a_{\gamma_{1},k}(\tau) \int_{0}^{\tau^k} (\tau^{k}-s)^{\chi_2}s^{\nu_{2}}f(s^{1/k},m) ds ||_{(\nu,\beta,\mu,k,\epsilon)} \\
\leq C_{2.1}|\epsilon|^{k(1+\nu_{2}+\chi_{2}-\gamma_{1})}
||f(\tau,m)||_{(\nu,\beta,\mu,k,\epsilon)}
\label{conv_op_prod_s_continuity_1}
\end{multline}
for all $f(\tau,m) \in F_{(\nu,\beta,\mu,k,\epsilon)}^{d}$.\medskip

\noindent {\bf ii)} Assume that $\chi_{2} = \frac{\chi}{k} - 1$ for some real number $\chi \geq 1$.\medskip

\noindent If $\nu_{2} + \frac{1}{k} - \gamma_{1} \leq 0$, then there exists a constant $C_{2.2}>0$
(depending $\chi,k,\nu,\gamma_{1},\nu_{2}$) on such that
\begin{multline}
|| a_{\gamma_{1},k}(\tau) \int_{0}^{\tau^k} (\tau^{k}-s)^{\chi_2}s^{\nu_{2}}f(s^{1/k},m) ds ||_{(\nu,\beta,\mu,k,\epsilon)} \\
\leq C_{2.2}|\epsilon|^{k(1+\nu_{2}+\chi_{2}-\gamma_{1})}
||f(\tau,m)||_{(\nu,\beta,\mu,k,\epsilon)}
\label{conv_op_prod_s_continuity_2}
\end{multline}
for all $f(\tau,m) \in F_{(\nu,\beta,\mu,k,\epsilon)}^{d}$.
\end{prop}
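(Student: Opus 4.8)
The plan is to follow almost verbatim the machinery already displayed in the proof of the preceding proposition (the estimate (\ref{conv_op_continuity_1})), the only genuine novelties being the extra analytic factor $a_{\gamma_{1},k}(\tau)$, the monomial $s^{\nu_{2}}$ inside the integral, and the splitting into two cases according to the shape of $\chi_{2}$. First I would apply the definition of $||\cdot||_{(\nu,\beta,\mu,k,\epsilon)}$ to the function $g(\tau,m)=a_{\gamma_{1},k}(\tau)\int_{0}^{\tau^{k}}(\tau^{k}-s)^{\chi_{2}}s^{\nu_{2}}f(s^{1/k},m)\,ds$. Inside the integral I multiply and divide $f(s^{1/k},m)$ by the weight $(1+|m|)^{\mu}e^{\beta|m|}\frac{1+|s|^{2}/|\epsilon|^{2k}}{|s|^{1/k}/|\epsilon|}\exp(-\nu|s|/|\epsilon|^{k})$ coming from the norm, so that the bracketed quantity is dominated by $||f||_{(\nu,\beta,\mu,k,\epsilon)}$ and the remaining kernel $\mathcal{A}(\tau,s,m,\epsilon)$ is explicit. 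The factors in $m$ cancel, and using the hypothesis $|a_{\gamma_{1},k}(\tau)|\le(1+|\tau|^{k})^{-\gamma_{1}}$ I reduce to $||g||\le ||f||\,C_{*}(\epsilon)$, where $C_{*}(\epsilon)$ is a supremum over $\tau$ of an integral over $h=|s|\in[0,|\tau|^{k}]$.

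Next, exactly as before, I perform the change of variable $h=|\epsilon|^{k}h'$. Counting the powers of $|\epsilon|$ produced by $(\tau^{k}-s)^{\chi_{2}}$, by $s^{\nu_{2}}$, by $ds$, and by the factor carried along with $a_{\gamma_{1},k}$, one obtains precisely the prefactor $|\epsilon|^{k(1+\nu_{2}+\chi_{2}-\gamma_{1})}$ asserted in (\ref{conv_op_prod_s_continuity_1}) and (\ref{conv_op_prod_s_continuity_2}), so that the whole statement collapses to the finiteness of an $\epsilon$-free quantity $\sup_{x\ge0}A(x)$, with $x=|\tau/\epsilon|^{k}$ and $A(x)$ of the shape $\frac{1+x^{2}}{x^{1/k}}e^{-\nu x}(1+x)^{-\gamma_{1}}\int_{0}^{x}(x-h)^{\chi_{2}}h^{\nu_{2}+1/k}\frac{e^{\nu h}}{1+h^{2}}\,dh$. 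Here the factor from $a_{\gamma_{1},k}$ is handled by $\frac{|\epsilon|^{k\gamma_{1}}}{(1+|\epsilon|^{k}x)^{\gamma_{1}}}\le x^{-\gamma_{1}}$ for large $x$ and by a bounded constant for small $x$. Since $A$ is continuous on $(0,\infty)$, its boundedness reduces to the behaviour at the two endpoints; near $x=0$ the inner integral is of Beta type, $\sim x^{\chi_{2}+\nu_{2}+1/k+1}$, which is harmless because $\chi_{2}>-1$ and $\nu_{2}\ge0$ make the exponent positive.

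The genuine work is the decay of $A(x)$ as $x\to+\infty$, and this is where the two cases part ways. In case i), where $\chi_{2}\ge0$, I bound $(x-h)^{\chi_{2}}\le x^{\chi_{2}}$, pull it out, and apply L'Hospital's rule to the quotient whose numerator is $\int_{0}^{x}h^{\nu_{2}+1/k}\frac{e^{\nu h}}{1+h^{2}}\,dh$ and whose denominator is $e^{\nu x}/((1+x^{2})x^{\chi_{2}-1/k-\gamma_{1}})$; the ratio of derivatives is asymptotic to a constant multiple of $x^{\nu_{2}+\chi_{2}-\gamma_{1}}$, which stays finite exactly under the assumption $\nu_{2}+\chi_{2}-\gamma_{1}\le0$, giving (\ref{conv_op_prod_s_continuity_1}).

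Case ii) is the main obstacle: since $\chi_{2}=\chi/k-1$ may be negative, the crude bound $(x-h)^{\chi_{2}}\le x^{\chi_{2}}$ is no longer legitimate, and the kernel is singular at $h=x$. Here I would instead substitute $w=x-h$, rewriting the inner integral as $e^{\nu x}\int_{0}^{x}w^{\chi/k-1}(x-w)^{\nu_{2}+1/k}\frac{e^{-\nu w}}{1+(x-w)^{2}}\,dw$, and exploit the concentration of $e^{-\nu w}$ near $w=0$ together with the convergent Gamma integral $\int_{0}^{\infty}w^{\chi/k-1}e^{-\nu w}\,dw=\Gamma(\chi/k)\nu^{-\chi/k}$, finite precisely because $\chi\ge1$. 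A careful L'Hospital estimate retaining the factor $\frac{1+x^{2}}{1+(x-w)^{2}}$ then shows that $A(x)$ remains bounded exactly when $\nu_{2}+\frac{1}{k}-\gamma_{1}\le0$, yielding (\ref{conv_op_prod_s_continuity_2}). I expect the delicate point to be precisely this bookkeeping: ensuring simultaneously integrability of the singular kernel at both endpoints and the sharp decay at infinity that lands on the stated sign condition, rather than a strictly stronger one.
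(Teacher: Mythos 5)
Your proof of part \textbf{i)} is essentially the paper's own: the same insertion of the norm weight, the same change of variable $h=|\epsilon|^{k}h'$ producing $|\epsilon|^{k(1+\nu_{2}+\chi_{2})}$, the same crude bound $(x-h)^{\chi_{2}}\leq x^{\chi_{2}}$ (legitimate because $\chi_{2}\geq 0$), the same two-regime treatment of $(1+|\epsilon|^{k}x)^{-\gamma_{1}}$ (constant for $x\leq x_{0}$, $\leq |\epsilon|^{-k\gamma_{1}}x^{-\gamma_{1}}$ beyond), and the same L'Hospital computation landing on $x^{\nu_{2}+\chi_{2}-\gamma_{1}}$. Part \textbf{ii)} is where you genuinely diverge. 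The paper's device is to rewrite $(\tau^{k}-s)^{\chi/k-1}=(\tau^{k}-s)^{\chi/k}\cdot(\tau^{k}-s)^{-1}$ and multiply and divide by the full norm weight in the increment $\tau^{k}-s$; this splits the estimate into a one-variable supremum $C_{2.2}(\epsilon)$, evaluated by the elementary bound (\ref{x_m_exp_x<}) and producing the factor $|\epsilon|^{\chi}$, and a purely rational double kernel $C_{2.3}(\epsilon)$ handled by the substitution $h=xu$ and a partial-fraction decomposition $F_{k}=F_{1,k}+F_{2,k}$. You instead keep a single integral and substitute $w=x-h$, so that the exponentials combine into $e^{-\nu w}$ and the singular power $w^{\chi/k-1}$ is tamed by $\int_{0}^{\infty}w^{\chi/k-1}e^{-\nu w}\,dw=\Gamma(\chi/k)\nu^{-\chi/k}$. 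This works: splitting at $w=x/2$, the far piece is killed by $e^{-\nu x/2}$ against a polynomial, while on $[0,x/2]$ one has $(x-w)^{\nu_{2}+1/k}/(1+(x-w)^{2})\leq C\,x^{\nu_{2}+1/k-2}$, whence $A(x)\leq C\,x^{\nu_{2}-\gamma_{1}}$ at infinity. Two remarks. First, L'Hospital is not the right framing for your case ii): after the substitution the inner integral converges as $x\to+\infty$ rather than diverging, so what is needed is exactly the direct splitting just described, not a limit of a quotient of derivatives. Second, your route only requires $\nu_{2}-\gamma_{1}\leq 0$, which is strictly weaker than the stated $\nu_{2}+\frac{1}{k}-\gamma_{1}\leq 0$ (the paper loses the $1/k$ by discarding $h^{1/k}\leq x^{1/k}$ before substituting $h=xu$); so ``bounded exactly when $\nu_{2}+\frac{1}{k}-\gamma_{1}\leq 0$'' is an overstatement, but the hypothesis of the proposition is amply sufficient and (\ref{conv_op_prod_s_continuity_2}) follows, with the same final power $|\epsilon|^{k(1+\nu_{2}+\chi_{2}-\gamma_{1})}$ as in the paper.
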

\begin{proof} In the first part of the proof, let us assume that {\bf i)} holds. Let
$f(\tau,m) \in F_{(\nu,\beta,\mu,k,\epsilon)}^{d}$. By definition, we have
\begin{multline}
|| a_{\gamma_{1},k}(\tau) \int_{0}^{\tau^k} (\tau^{k}-s)^{\chi_2}s^{\nu_2}f(s^{1/k},m) ds
||_{(\nu,\beta,\mu,k,\epsilon)} \\ = \sup_{\tau \in \bar{D}(0,\rho) \cup S_{d},m \in \mathbb{R}} (1+|m|)^{\mu}
\frac{1 + |\frac{\tau}{\epsilon}|^{2k}}{|\frac{\tau}{\epsilon}|}\exp( \beta|m| - \nu|\frac{\tau}{\epsilon}|^{k} )\\
\times | a_{\gamma_{1},k}(\tau) \int_{0}^{\tau^k} \{ (1+|m|)^{\mu} e^{\beta |m|}
\exp( -\nu |s|/|\epsilon|^{k} )
\frac{ 1 + \frac{|s|^2}{|\epsilon|^{2k}}}{ \frac{|s|^{1/k}}{|\epsilon|}}f(s^{1/k},m) \}\\
\times \mathcal{B}(\tau,s,m,\epsilon) ds|
\end{multline}
where
$$
\mathcal{B}(\tau,s,m,\epsilon) = \frac{1}{(1+|m|)^{\mu}} e^{-\beta |m|}
\frac{\exp( \nu |s|/|\epsilon|^{k})}{1 + \frac{|s|^2}{|\epsilon|^{2k}}} \frac{|s|^{1/k}}{|\epsilon|}
(\tau^{k}-s)^{\chi_2}s^{\nu_2}.
$$
Therefore,
\begin{equation}
|| a_{\gamma_{1},k}(\tau) \int_{0}^{\tau^k} (\tau^{k}-s)^{\chi_2}s^{\nu_2}f(s^{1/k},m) ds
||_{(\nu,\beta,\mu,k,\epsilon)} \leq C_{2}(\epsilon) ||f(\tau,m)||_{(\nu,\beta,\mu,k,\epsilon)} \label{continuity_op_prod_s}
\end{equation}
where
\begin{multline*}
 C_{2}(\epsilon) = \sup_{\tau \in \bar{D}(0,\rho) \cup S_{d}}
\frac{1 + |\frac{\tau}{\epsilon}|^{2k}}{|\frac{\tau}{\epsilon}|} \exp( -\nu |\frac{\tau}{\epsilon}|^{k} )\\
\times \frac{1}{(1+|\tau|^{k})^{\gamma_1}} \int_{0}^{|\tau|^k}
\frac{\exp( \nu h/|\epsilon|^{k})}{1+\frac{h^2}{|\epsilon|^{2k}}}
\frac{h^{\frac{1}{k}}}{|\epsilon|} (|\tau|^{k}-h)^{\chi_2} h^{\nu_2} dh
\end{multline*}
Making the change of variable $h=|\epsilon|^{k}h'$ in the integral inside $C_{2}(\epsilon)$ yields
\begin{multline}
C_{2}(\epsilon) = |\epsilon|^{k(1+\nu_{2}+\chi_{2})}
\sup_{\tau \in \bar{D}(0,\rho) \cup S_{d}}
\frac{1 + |\frac{\tau}{\epsilon}|^{2k}}{|\frac{\tau}{\epsilon}|}
\exp( -\nu |\frac{\tau}{\epsilon}|^{k} )\\
\times \frac{1}{(1+|\epsilon|^{k}|\frac{\tau}{\epsilon}|^{k})^{\gamma_1}}
\int_{0}^{|\frac{\tau}{\epsilon}|^k} \frac{ \exp( \nu h' ) }{ 1 + h'^{2}} (h')^{\frac{1}{k}}
(|\frac{\tau}{\epsilon}|^{k} - h')^{\chi_{2}} h'^{\nu_2} dh'\\
\leq |\epsilon|^{k (1 + \nu_{2} + \chi_{2})} \sup_{x \geq 0} B(x,\epsilon) \label{C2_chg_var}
\end{multline}
where
$$ B(x,\epsilon) = \frac{1 + x^2}{x^{1/k}} \exp( -\nu x ) \frac{1}{(1+|\epsilon|^{k}x)^{\gamma_1}}
\int_{0}^{x} \frac{\exp( \nu h )}{1 + h^2} h^{\frac{1}{k} + \nu_{2}} (x-h)^{\chi_2} dh.
$$
For any $x>0$, we get that $B(x,\epsilon) \leq \tilde{B}(x,\epsilon)$, where
$$
\tilde{B}(x,\epsilon) = \frac{(1+x^2)x^{\chi_{2}}}{(1+|\epsilon|^{k}x)^{\gamma_1}}
\exp(-\nu x) \int_{0}^{x} \frac{\exp( \nu h )}{1 + h^2} h^{\nu_2} dh
$$
Let $x_{0}>0$. From the inequality $1+|\epsilon|^{k}x \geq 1$, for all $x \in [0,x_{0}]$ and
$\epsilon \in \mathcal{E}$, there exists a constant $\tilde{B}>0$ such that
\begin{equation}
\sup_{x \in [0,x_{0}],\epsilon \in \mathcal{E}} \tilde{B}(x,\epsilon) \leq \tilde{B}. \label{tilde_B_bounded_small_x}
\end{equation}
On the other hand, since $1 + |\epsilon|^{k}x \geq |\epsilon|^{k}x$ holds for all $x \geq 0$ and $\epsilon \in \mathcal{E}$, we get that
$\tilde{B}(x,\epsilon) \leq \tilde{B}_{2}(x)/|\epsilon|^{k \gamma_{1}}$ where
\begin{equation}
\tilde{B}_{2}(x) = (1+x^2)x^{\chi_{2}-\gamma_{1}} \exp( -\nu x ) \int_{0}^{x}
\frac{\exp(\nu h)}{1 + h^2} h^{\nu_{2}} dh
\end{equation}
for all $x \geq x_{0}$. By L'Hospital rule we get that
$$
\lim_{x \rightarrow +\infty} \tilde{B}_{2}(x)
=  \lim_{x \rightarrow +\infty} \frac{ (1+x^2)x^{2(\chi_{2} - \gamma_{1})}x^{\nu_{2}} }{\nu
(1+x^2)x^{\chi_{2} - \gamma_{1}} - (2x^{\chi_{2} - \gamma_{1} + 1} +
(\chi_{2} - \gamma_{1})x^{\chi_{2} - \gamma_{1} - 1}(1+x^2))}
$$
which is finite if we assume that $1 \geq (1 + \nu_{2} + \chi_{2} - \gamma_{1})$. We deduce that there exists a constant
$\tilde{B}_{2}>0$ such that
\begin{equation}
\sup_{x \geq x_{0}} \tilde{B}(x,\epsilon) \leq \frac{1}{|\epsilon|^{k\gamma_{1}}} \sup_{x \geq x_{0}} \tilde{B}_{2}(x) \leq
\frac{\tilde{B}_{2}}{|\epsilon|^{k\gamma_{1}}} \label{tilde_B_2_bounded}
\end{equation}
Bearing in mind the estimates (\ref{continuity_op_prod_s}), (\ref{C2_chg_var}), (\ref{tilde_B_bounded_small_x}) and
(\ref{tilde_B_2_bounded}), we obtain (\ref{conv_op_prod_s_continuity_1}).\medskip

In the second part of the proof, assume now that the condition {\bf ii)} holds. Let
$f(\tau,m) \in F_{(\nu,\beta,\mu,k,\epsilon)}^{d}$. By definition, we have
\begin{multline}
|| a_{\gamma_{1},k}(\tau) \int_{0}^{\tau^k} (\tau^{k}-s)^{\frac{\chi}{k}-1}s^{\nu_2}f(s^{1/k},m) ds
||_{(\nu,\beta,\mu,k,\epsilon)} \\ = \sup_{\tau \in \bar{D}(0,\rho) \cup S_{d},m \in \mathbb{R}} (1+|m|)^{\mu}
\frac{1 + |\frac{\tau}{\epsilon}|^{2k}}{|\frac{\tau}{\epsilon}|}\exp( \beta|m| - \nu|\frac{\tau}{\epsilon}|^{k} )\\
\times | a_{\gamma_{1},k}(\tau) \int_{0}^{\tau^k} \{ (1+|m|)^{\mu} e^{\beta |m|}
\exp( -\nu |s|/|\epsilon|^{k} )
\frac{ 1 + \frac{|s|^2}{|\epsilon|^{2k}}}{ \frac{|s|^{1/k}}{|\epsilon|}}f(s^{1/k},m) \}\\
\times \{ \exp( -\nu \frac{|\tau^{k}-s|}{|\epsilon|^k} )
\frac{ 1 + \frac{|\tau^{k}-s|^2}{|\epsilon|^{2k}} }{\frac{|\tau^{k}-s|^{1/k}}{|\epsilon|}} (\tau^{k}-s)^{\frac{\chi}{k}} \}
\times \mathfrak{B}(\tau,s,m,\epsilon) ds|
\end{multline}
where
\begin{multline*}
\mathfrak{B}(\tau,s,m,\epsilon) = \frac{e^{-\beta |m|}}{(1+|m|)^{\mu}} \exp(\nu \frac{|s|}{|\epsilon|^k})
\exp( \nu \frac{ |\tau^{k}-s| }{|\epsilon|^k} ) \frac{|s|^{1/k}}{|\epsilon|} \frac{ |\tau^{k}-s|^{1/k}}{|\epsilon|}\\
\times \frac{1}{1 + \frac{|s|^2}{|\epsilon|^{2k}}}\frac{1}{1 + \frac{|\tau^{k}-s|^2}{|\epsilon|^{2k}}}(\tau^{k}-s)^{-1}s^{\nu_2}.
\end{multline*}
Hence,
\begin{equation}
|| a_{\gamma_{1},k}(\tau) \int_{0}^{\tau^k} (\tau^{k}-s)^{\frac{\chi}{k}-1}s^{\nu_2}f(s^{1/k},m) ds
||_{(\nu,\beta,\mu,k,\epsilon)} \leq
C_{2.2}(\epsilon)C_{2.3}(\epsilon)||f(\tau,m)||_{(\nu,\beta,\mu,k,\epsilon)} \label{norm_a_int_polyn_f<C22C23_norm_f}
\end{equation}
where
\begin{multline}
 C_{2.2}(\epsilon) = \sup_{x \geq 0} \exp( -\nu \frac{x}{|\epsilon|^k})
\frac{1 + \frac{x^2}{|\epsilon|^{2k}}}{\frac{x^{1/k}}{|\epsilon|}} x^{\frac{\chi}{k}},\\
C_{2.3}(\epsilon) = \sup_{\tau \in \bar{D}(0,\rho) \cup S_{d}}
\frac{1 + |\frac{\tau}{\epsilon}|^{2k}}{|\frac{\tau}{\epsilon}|}\frac{1}{(1 + |\tau|^k)^{\gamma_1}}\\
\times 
\int_{0}^{|\tau|^k} \frac{h^{1/k}}{|\epsilon|}\frac{(|\tau|^{k}-h)^{1/k}}{|\epsilon|}
\frac{1}{1 + \frac{h^2}{|\epsilon|^{2k}}}\frac{1}{1 + \frac{(|\tau|^{k}-h)^2}{|\epsilon|^{2k}}}(|\tau|^{k}-h)^{-1}h^{\nu_2} dh.
\end{multline}
By using the classical estimates
\begin{equation}
\sup_{x \geq 0} x^{m_1}\exp(-m_{2}x) = (\frac{m_1}{m_2})^{m_1}e^{-m_1} \label{x_m_exp_x<}
\end{equation}
for any real numbers $m_{1} \geq 0$ and $m_{2}>0$, we get that
\begin{equation}
C_{2.2}(\epsilon) \leq |\epsilon|^{\chi}\left( (\frac{\chi-1}{k \nu})^{\frac{\chi-1}{k}}e^{-(\frac{\chi-1}{k})}
+ (\frac{2 + \frac{\chi-1}{k}}{\nu})^{2+\frac{\chi-1}{k}} e^{-(2 + \frac{\chi-1}{k})} \right). \label{C22_epsilon<}
\end{equation}
Making the change of variable $h=|\epsilon|^{k}h'$ in the integral involved in the definition of $C_{2.3}(\epsilon)$ yields
\begin{multline}
C_{2.3}(\epsilon) = \sup_{\tau \in \bar{D}(0,\rho) \cup S_{d}}
\frac{1 + |\frac{\tau}{\epsilon}|^{2k}}{|\frac{\tau}{\epsilon}|}\frac{1}{(1 + |\epsilon|^{k}|\frac{\tau}{\epsilon}|^k)^{\gamma_1}}\\
\times \int_{0}^{|\frac{\tau}{\epsilon}|^{k}} (h')^{1/k} (|\frac{\tau}{\epsilon}|^{k}-h')^{1/k} \frac{1}{1 + (h')^2}
\frac{1}{1 + (|\frac{\tau}{\epsilon}|^{k} - h')^{2}} (|\frac{\tau}{\epsilon}|^{k}-h')^{-1} |\epsilon|^{k \nu_{2}}
(h')^{\nu_2} dh' \\
\leq |\epsilon|^{k \nu_{2}} \sup_{x \geq 0} B_{2.3}(x,\epsilon) \label{C23_epsilon<}
\end{multline}
where
$$ B_{2.3}(x,\epsilon) = \frac{1 + x^2}{x^{1/k}} \frac{1}{(1 + |\epsilon|^{k}x)^{\gamma_1}}
\int_{0}^{x} \frac{1}{(1+h^2)(1+(x-h)^2)} \frac{1}{(x-h)^{1-\frac{1}{k}}} h^{\frac{1}{k}+\nu_{2}} dh. $$
For any $x>0$, we have that $B_{2.3}(x,\epsilon) \leq \tilde{B}_{2.3}(x,\epsilon)$, where
$$ \tilde{B}_{2.3}(x,\epsilon) = \frac{1+x^2}{(1 + |\epsilon|^{k}x)^{\gamma_1}}
\int_{0}^{x} \frac{1}{(1+h^2)(1 + (x-h)^2)} \frac{1}{(x-h)^{1 - \frac{1}{k}}} h^{\nu_2} dh. $$
Let $x_{0}>0$. From the inequality $1 + |\epsilon|^{k}x \geq 1$, for all $x \in [0,x_{0}]$, $\epsilon \in \mathcal{E}$, there
exists a constant $\tilde{B}_{2.3}>0$ such that
\begin{equation}
\sup_{x \in [0,x_{0}], \epsilon \in \mathcal{E}} \tilde{B}_{2.3}(x,\epsilon) \leq \tilde{B}_{2.3}. \label{tilde_B23_x_near_zero<}
\end{equation}
On the other hand, since $1 + |\epsilon|^{k}x \geq |\epsilon|^{k}x$ holds for all $x \geq 0$ and $\epsilon \in \mathcal{E}$, we get that
\begin{equation}
\tilde{B}_{2.3}(x,\epsilon) \leq \frac{ \tilde{B}_{2.4}(x) }{|\epsilon|^{k\gamma_1}}
\end{equation}
where
$$ \tilde{B}_{2.4}(x) = (1+x^2)x^{-\gamma_{1}} \int_{0}^{x} \frac{1}{(1+h^2)(1+(x-h)^2)} \frac{1}{(x-h)^{1-\frac{1}{k}}}
h^{\nu_2} dh $$
for all $x \geq x_{0}$. Now, we make the change of variable $h=xu$ in the integral inside $\tilde{B}_{2.4}(x)$. We can write
$$ \tilde{B}_{2.4}(x) = (1+x^2)x^{\nu_{2}+\frac{1}{k}-\gamma_{1}}F_{k}(x) $$
where
$$ F_{k}(x) = \int_{0}^{1} \frac{u^{\nu_2}}{(1+x^{2}u^{2})(1 + x^{2}(1-u)^2)(1-u)^{1-\frac{1}{k}}} du. $$
Using a partial fraction decomposition, we can split $F_{k} = F_{1,k}(x) + F_{2,k}(x)$, where
\begin{multline*}
F_{1,k}(x) = \frac{1}{4+x^2} \int_{0}^{1} \frac{(2u+1)u^{\nu_2}}{(1+x^{2}u^{2})(1-u)^{1 - \frac{1}{k}}} du,\\
F_{2,k}(x) = \frac{1}{4 + x^2}\int_{0}^{1} \frac{(3-2u)u^{\nu_2}}{(1 + x^{2}(1-u)^{2})(1-u)^{1 - \frac{1}{k}}} du.
\end{multline*}
In particular, we observe that there exist two constants $\mathfrak{F}_{1,k},\mathfrak{F}_{2,k}>0$ such that
\begin{equation}
F_{1,k}(x) \leq \frac{ \mathfrak{F}_{1,k} }{4 + x^{2}} \ \ , \ \ F_{2,k}(x) \leq \frac{ \mathfrak{F}_{2,k} }{4 + x^{2}}
\end{equation}
for all $x \geq x_{0}$. Hence, if one assumes that $\nu_{2} + \frac{1}{k} - \gamma_{1} \leq 0$, then we get a constant
$\tilde{B}_{2.4.1}>0$ such that
\begin{equation}
\sup_{x \geq x_{0}} \tilde{B}_{2.3}(x,\epsilon) \leq \frac{1}{|\epsilon|^{k \gamma_{1}}}
\sup_{x \geq x_{0}} \tilde{B}_{2.4}(x) \leq \frac{\tilde{B}_{2.4.1}}{|\epsilon|^{k \gamma_{1}}} \label{tilde_B23_x_large<}
\end{equation}
Finally, gathering all the estimates (\ref{norm_a_int_polyn_f<C22C23_norm_f}), (\ref{C22_epsilon<}),
(\ref{C23_epsilon<}), (\ref{tilde_B23_x_near_zero<}), (\ref{tilde_B23_x_large<}), we get (\ref{conv_op_prod_s_continuity_2}).
\end{proof}

\begin{prop} Let $k \geq 1$ be an integer. Let $Q_{1}(X),Q_{2}(X),R(X) \in \mathbb{C}[X]$ such that
\begin{equation}
\mathrm{deg}(R) \geq \mathrm{deg}(Q_{1}) \ \ , \ \ \mathrm{deg}(R) \geq \mathrm{deg}(Q_{2}) \ \ , \ \ R(im) \neq 0
\label{R>Q1_R>Q2_R_nonzero}
\end{equation}
for all $m \in \mathbb{R}$. Assume that $\mu > \max( \mathrm{deg}(Q_{1})+1, \mathrm{deg}(Q_{2})+1 )$. Let
$m \mapsto b(m)$ be a continuous function on $\mathbb{R}$ such that
$$
|b(m)| \leq \frac{1}{|R(im)|}
$$
for all $m \in \mathbb{R}$. Then, there exists a constant $C_{3}>0$ (depending on $Q_{1},Q_{2},R,\mu,k,\nu$) such that
\begin{multline}
|| b(m) \int_{0}^{\tau^{k}} (\tau^{k}-s)^{\frac{1}{k}} ( \int_{0}^{s} \int_{-\infty}^{+\infty} Q_{1}(i(m-m_{1}))
f((s-x)^{1/k},m-m_{1}) \\
\times Q_{2}(im_{1}) g(x^{1/k},m_{1}) \frac{1}{(s-x)x} dx dm_{1} ) ds ||_{(\nu,\beta,\mu,k,\epsilon)} \\
\leq C_{3}|\epsilon| ||f(\tau,m)||_{(\nu,\beta,\mu,k,\epsilon)} ||g(\tau,m)||_{(\nu,\beta,\mu,k,\epsilon)}
\label{norm_conv_f_g<norm_f_times_norm_g}
\end{multline}
for all $f(\tau,m), g(\tau,m) \in F_{(\nu,\beta,\mu,k,\epsilon)}^{d}$.
\end{prop}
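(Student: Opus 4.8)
The plan is to estimate the triple integral directly from the definition of $||\cdot||_{(\nu,\beta,\mu,k,\epsilon)}$, splitting the work into an $m$-part controlled by the hypotheses on $Q_1,Q_2,R$ and $\mu$, and a time-part controlled by the very same change-of-variable and asymptotic techniques already used in the proofs of Propositions 1 and 2. First I would write out the supremum defining the left-hand norm and insert the defining bounds for $f$ and $g$: since $f,g \in F_{(\nu,\beta,\mu,k,\epsilon)}^{d}$, taking $\tau=(s-x)^{1/k}$ gives
\[ |f((s-x)^{1/k},m-m_1)| \le ||f|| \, (1+|m-m_1|)^{-\mu} e^{-\beta|m-m_1|} \frac{|s-x|^{1/k}/|\epsilon|}{1+|s-x|^{2}/|\epsilon|^{2k}} \exp\Big(\nu \frac{|s-x|}{|\epsilon|^{k}}\Big), \]
and analogously for $g(x^{1/k},m_1)$. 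I would then bound $|Q_1(i(m-m_1))| \le C(1+|m-m_1|)^{\mathrm{deg}(Q_1)}$, $|Q_2(im_1)| \le C(1+|m_1|)^{\mathrm{deg}(Q_2)}$ and use $|b(m)| \le 1/|R(im)| \le C(1+|m|)^{-\mathrm{deg}(R)}$, the last step being legitimate because $R(im)\neq 0$ on $\mathbb{R}$ forces $|R(im)|$ to be bounded below by a positive constant and to grow like $|m|^{\mathrm{deg}(R)}$. Since each factor of the integrand splits into a pure function of $(m-m_1,m_1)$ times a pure function of $(s-x,x)$ times $\frac{1}{(s-x)x}$, the inner double integral factorizes into an $m_1$-integral and an $x$-integral.

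Next I would treat the $m_1$-convolution. The inequality $|m| \le |m-m_1|+|m_1|$ gives $e^{\beta|m|}e^{-\beta|m-m_1|}e^{-\beta|m_1|} \le 1$, absorbing all exponential factors in the Fourier variables, so it remains to bound
\[ (1+|m|)^{\mu-\mathrm{deg}(R)} \int_{-\infty}^{+\infty} (1+|m-m_1|)^{\mathrm{deg}(Q_1)-\mu}(1+|m_1|)^{\mathrm{deg}(Q_2)-\mu}\, dm_1 . \]
As $\mu > \mathrm{deg}(Q_i)+1$, both exponents are $<-1$, and the classical convolution estimate in $E_{(\beta,\mu)}$ shows the integral is $\le C(1+|m|)^{-(\mu-\max(\mathrm{deg}(Q_1),\mathrm{deg}(Q_2)))}$; combining this with the prefactor and the inequality $\mathrm{deg}(R) \ge \max(\mathrm{deg}(Q_1),\mathrm{deg}(Q_2))$ from (\ref{R>Q1_R>Q2_R_nonzero}) leaves a quantity bounded uniformly in $m$. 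This is precisely where all three conditions in (\ref{R>Q1_R>Q2_R_nonzero}) and the size assumption on $\mu$ are consumed.

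For the time variables I would combine $\frac{|s-x|^{1/k}}{|s-x|}=|s-x|^{1/k-1}$ and likewise for $x$, and observe that along the segment $[0,\tau^{k}]$ one has $|s|=|s-x|+|x|$, so the two factors $\exp(\nu|s-x|/|\epsilon|^{k})$ and $\exp(\nu|x|/|\epsilon|^{k})$ collapse to $\exp(\nu|s|/|\epsilon|^{k})$. Performing the substitution $x=|\epsilon|^{k}u$ in the inner integral and then $w=|\epsilon|^{k}w'$ (the modulus of $s$) in the outer integral, exactly as in Propositions 1 and 2, I would extract a single factor $|\epsilon|$ and reduce the whole estimate to
\[ \sup_{X\ge 0} \frac{1+X^{2}}{X^{1/k}} e^{-\nu X}\int_{0}^{X}(X-w')^{1/k} e^{\nu w'} G(w')\, dw' < \infty, \qquad X=|\tfrac{\tau}{\epsilon}|^{k}, \]
where $G(w')=\int_{0}^{w'} \frac{(w'-u)^{1/k-1} u^{1/k-1}}{(1+(w'-u)^{2})(1+u^{2})}\, du$.

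The main obstacle is the last display, the genuinely bilinear analogue of the one-dimensional integrals handled before. I would first control $G$ itself: splitting $[0,w']$ at $w'/2$ and applying the partial-fraction decomposition of $\frac{1}{(1+(w'-u)^{2})(1+u^{2})}$ used in part ii) of Proposition 2 yields $G(w') \le C/(1+w'^{2})$ for large $w'$, while near the origin $G(w')$ behaves like $w'^{2/k-1}$ and stays integrable. Writing $e^{\nu w'}e^{-\nu X}=e^{-\nu(X-w')}$ and substituting $v=X-w'$ localizes the outer integral near $v=0$; since $G(X-v) \le C/(1+X^{2})$ on $[0,X/2]$ and the tail $[X/2,X]$ is exponentially negligible, the prefactor $\frac{1+X^{2}}{X^{1/k}}$ is compensated and the supremum is finite (the small-$X$ regime being handled by the $w'^{2/k-1}$ behaviour, which makes the integral vanish like $X^{2/k}$). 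Collecting the bounded $m$-constant, this bounded supremum, and the extracted factor $|\epsilon|$ yields (\ref{norm_conv_f_g<norm_f_times_norm_g}) with a constant $C_3$ depending only on $Q_1,Q_2,R,\mu,k,\nu$.
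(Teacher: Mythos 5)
Your proposal is correct and follows essentially the same route as the paper's proof: the same insertion of the norm weights for $f$ and $g$, the same polynomial bounds $|Q_j(im)|\leq C(1+|m|)^{\mathrm{deg}(Q_j)}$, $|R(im)|\geq C(1+|m|)^{\mathrm{deg}(R)}$ and convolution lemma for the $m_{1}$-integral, the same rescalings $h=|\epsilon|^{k}h'$, $x=|\epsilon|^{k}x'$ extracting the single factor $|\epsilon|$, and the same reduction to the scalar supremum over $X=|\tau/\epsilon|^{k}$ involving the kernel $J_{k}(h')=\int_{0}^{h'}(1+(h'-x')^{2})^{-1}(1+x'^{2})^{-1}(h'-x')^{\frac{1}{k}-1}x'^{\frac{1}{k}-1}dx'$. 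The only divergence is in the endgame: you bound $J_{k}(h')\leq C/(1+h'^{2})$ for large $h'$ uniformly in $k\geq 1$ and finish by splitting the outer integral at $X/2$ after the substitution $v=X-w'$, whereas the paper uses the cruder bound $J_{k}(h')\leq j_{k}h'^{\frac{2}{k}-1}/(h'^{2}+4)$ together with L'Hospital's rule for $k\geq 2$ and must invoke Corollary 4.9 of Costin--Tanveer separately for $k=1$; your unified treatment of the two cases checks out and is a small but genuine simplification.
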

\begin{proof} Let $f(\tau,m), g(\tau,m) \in F_{(\nu,\beta,\mu,k,\epsilon)}^{d}$. For any $\tau \in \bar{D}(0,\rho) \cup S_{d}$, the
segment $[0,\tau^{k}]$ is such that for any $s \in [0,\tau^{k}]$, any $x \in [0,s]$, the expressions $f((s-x)^{1/k},m-m_{1})$
and $g(x^{1/k},m_{1})$ are well defined, provided that $m,m_{1} \in \mathbb{R}$. By definition, we can write
\begin{multline*}
|| b(m) \int_{0}^{\tau^{k}} (\tau^{k}-s)^{\frac{1}{k}} ( \int_{0}^{s} \int_{-\infty}^{+\infty} Q_{1}(i(m-m_{1}))
f((s-x)^{1/k},m-m_{1}) \\
\times Q_{2}(im_{1}) g(x^{1/k},m_{1}) \frac{1}{(s-x)x} dx dm_{1} ) ds ||_{(\nu,\beta,\mu,k,\epsilon)}\\
= \sup_{\tau \in \bar{D}(0,\rho) \cup S_{d},m \in \mathbb{R}} (1+|m|)^{\mu}
\frac{1 + |\frac{\tau}{\epsilon}|^{2k}}{|\frac{\tau}{\epsilon}|}\exp( \beta|m| - \nu|\frac{\tau}{\epsilon}|^{k} )\\
\times | \int_{0}^{\tau^{k}} (\tau^{k}-s)^{1/k} ( \int_{0}^{s}
\int_{-\infty}^{+\infty} \{ (1+|m-m_{1}|)^{\mu} e^{\beta|m-m_{1}|}
\frac{1 + \frac{|s-x|^{2}}{|\epsilon|^{2k}}}{\frac{|s-x|^{1/k}}{|\epsilon|}}
\exp( -\nu |s-x|/|\epsilon|^{\kappa} )\\
\times f( (s-x)^{1/k},m-m_{1}) \} \times \{ (1 + |m_{1}|)^{\mu}e^{\beta|m_{1}|}
\frac{1 + \frac{|x|^2}{|\epsilon|^{2k}} }{ \frac{|x|^{1/k}}{|\epsilon|} } \exp( -\nu |x|/|\epsilon|^{k} )
g(x^{1/k},m_{1}) \}\\
\times \mathcal{C}(s,x,m,m_{1},\epsilon) dxdm_{1} ) ds |
\end{multline*}
where
\begin{multline*}
\mathcal{C}(s,x,m,m_{1},\epsilon) = \frac{ \exp(-\beta|m_1|) \exp(-\beta|m-m_{1}|) }{(1 + |m-m_{1}|)^{\mu}(1+|m_{1}|)^{\mu}}
b(m)Q_{1}(i(m-m_{1}))Q_{2}(im_{1})\\
\times \frac{ \frac{|s-x|^{1/k}|x|^{1/k}}{|\epsilon|^2} }{(1 + \frac{|s-x|^2}{|\epsilon|^{2k}})(1 + \frac{|x|^2}{|\epsilon|^{2k}})}
\times 
\exp( \nu |s-x|/|\epsilon|^{k} ) \exp( \nu |x|/|\epsilon|^{k} ) \frac{1}{(s-x)x}
\end{multline*}
Now, we know that there exist $\mathfrak{Q}_{1},\mathfrak{Q}_{2},\mathfrak{R}>0$ with
\begin{multline}
|Q_{1}(i(m-m_{1}))| \leq \mathfrak{Q}_{1}(1 + |m-m_{1}|)^{\mathrm{deg}(Q_{1})} \ \ , \ \
|Q_{2}(im_{1})| \leq \mathfrak{Q}_{2}(1 + |m_{1}|)^{\mathrm{deg}(Q_{2})},\\
|R(im)| \geq \mathfrak{R}(1+|m|)^{\mathrm{deg}(R)} \label{Q1_Q2_R_deg_order}
\end{multline}
for all $m,m_{1} \in \mathbb{R}$. Therefore,
\begin{multline}
|| b(m) \int_{0}^{\tau^{k}} (\tau^{k}-s)^{\frac{1}{k}} ( \int_{0}^{s} \int_{-\infty}^{+\infty} Q_{1}(i(m-m_{1}))
f((s-x)^{1/k},m-m_{1})\\
\times Q_{2}(im_{1})g(x^{1/k},m_{1}) \frac{1}{(s-x)x} dx dm_{1} ) ds ||_{(\nu,\beta,\mu,k,\epsilon)}\\
\leq C_{3}(\epsilon)
||f(\tau,m)||_{(\nu,\beta,\mu,k,\epsilon)} ||g(\tau,m)||_{(\nu,\beta,\mu,k,\epsilon)}
\end{multline}
where
\begin{multline}
C_{3}(\epsilon) = \sup_{\tau \in \bar{D}(0,\rho) \cup S_{d},m \in \mathbb{R}} (1+|m|)^{\mu}
\frac{1 + |\frac{\tau}{\epsilon}|^{2k}}{|\frac{\tau}{\epsilon}|}\exp( \beta|m| - \nu|\frac{\tau}{\epsilon}|^{k} )
\frac{1}{\mathfrak{R}(1+|m|)^{\mathrm{deg}(R)}}\\
\times \int_{0}^{|\tau|^{k}} (|\tau|^{k}-h)^{1/k} (\int_{0}^{h}
\int_{-\infty}^{+\infty} \frac{ \exp(-\beta|m_1|) \exp(-\beta|m-m_{1}|) }{(1 + |m-m_{1}|)^{\mu}(1+|m_{1}|)^{\mu}}\\
\times
\mathfrak{Q}_{1}\mathfrak{Q}_{2}(1+|m-m_{1}|)^{\mathrm{deg}(Q_{1})}(1+|m_{1}|)^{\mathrm{deg}(Q_{2})}
\frac{ \frac{(h-x)^{1/k}x^{1/k}}{|\epsilon|^2} }{(1 + \frac{(h-x)^2}{|\epsilon|^{2k}})(1 + \frac{x^2}{|\epsilon|^{2k}})}\\
\times 
\exp( \nu (h-x)/|\epsilon|^{k} ) \exp( \nu x/|\epsilon|^{k} ) \frac{1}{(h-x)x} dx dm_{1}) dh
\label{defin_C_3}
\end{multline}

Using the triangular inequality $|m| \leq |m_{1}| + |m-m_{1}|$, for all $m,m_{1} \in \mathbb{R}$, we get that
$C_{3}(\epsilon) \leq C_{3.1}C_{3.2}(\epsilon)$ where
\begin{equation}
C_{3.1} = \frac{\mathfrak{Q}_{1}\mathfrak{Q}_{2}}{\mathfrak{R}}
\sup_{m \in \mathbb{R}} (1+|m|)^{\mu - \mathrm{deg}(R)} \int_{-\infty}^{+\infty}
\frac{1}{(1+|m-m_{1}|)^{\mu - \mathrm{deg}(Q_{1})}(1+|m_{1}|)^{\mu - \mathrm{deg}(Q_{2})}} dm_{1} \label{defin_C_3.1} 
\end{equation}
which is finite whenever $\mu>\max( \mathrm{deg}(Q_{1})+1, \mathrm{deg}(Q_{2})+1)$ under the assumption
(\ref{R>Q1_R>Q2_R_nonzero}) using the same estimates as in Lemma 4 of \cite{ma2} (see also the Lemma 2.2 from \cite{cota2}),
and where
\begin{multline}
C_{3.2}(\epsilon) = \sup_{\tau \in \bar{D}(0,\rho) \cup S_{d}}
\frac{1 + |\frac{\tau}{\epsilon}|^{2k}}{|\frac{\tau}{\epsilon}|} \exp( -\nu |\frac{\tau}{\epsilon}|^{k} )\\
\times 
\int_{0}^{|\tau|^k} (|\tau|^{k}-h)^{1/k} \exp( \nu h/|\epsilon|^{\kappa} ) \int_{0}^{h}
\frac{ \frac{(h-x)^{1/k}x^{1/k}}{|\epsilon|^2} }{(1 + \frac{(h-x)^2}{|\epsilon|^{2k}})(1 + \frac{x^2}{|\epsilon|^{2k}})}
\frac{1}{(h-x)x} dx dh. \label{defin_C3.2}
\end{multline}
Making the changes of variables $h=|\epsilon|^{k}h'$ and $x=|\epsilon|^{k}x'$, we get that
\begin{multline}
\int_{0}^{|\tau|^k} (|\tau|^{k}-h)^{1/k} \exp( \nu h/|\epsilon|^{k} ) \int_{0}^{h}
\frac{ \frac{(h-x)^{1/k}x^{1/k}}{|\epsilon|^2} }{(1 + \frac{(h-x)^2}{|\epsilon|^{2k}})(1 + \frac{x^2}{|\epsilon|^{2k}})}
\frac{1}{(h-x)x} dx dh\\
= |\epsilon| \int_{0}^{|\frac{\tau}{\epsilon}|^{k}} (|\frac{\tau}{\epsilon}|^{k}-h')^{1/k}
\exp( \nu h' ) \int_{0}^{h'} \frac{1}{(1 + (h'-x')^2)(1+x'^{2})} \frac{1}{(h'-x')^{1-\frac{1}{k}}x'^{1-\frac{1}{k}}} dx'dh'
\label{change_var_int}
\end{multline}
From (\ref{defin_C3.2}) and (\ref{change_var_int}), we get that $C_{3.2}(\epsilon) \leq |\epsilon|C_{3.3}$, where
\begin{multline}
C_{3.3} = \sup_{x \geq 0} \frac{1+x^2}{x^{1/k}} \exp( -\nu x )
\int_{0}^{x} (x-h')^{1/k} \exp( \nu h' )\\
\times (\int_{0}^{h'} \frac{1}{(1 + (h'-x')^2)(1+x'^{2})} \frac{1}{(h'-x')^{1-\frac{1}{k}}x'^{1-\frac{1}{k}}} dx') dh'
\label{defin_C3.3}
\end{multline}
Again by the change of variable $x'=h'u$, for $u \in [0,1]$, we can write
\begin{multline}
\int_{0}^{h'} \frac{1}{(1 + (h'-x')^2)(1+x'^{2})} \frac{1}{(h'-x')^{1-\frac{1}{k}}x'^{1-\frac{1}{k}}} dx'\\
= \frac{1}{h'^{1 - \frac{2}{k}}} \int_{0}^{1}
\frac{1}{(1 + (h')^{2}(1-u)^{2})(1 + h'^{2}u^2)(1 - u)^{1 - \frac{1}{k}}u^{1 - \frac{1}{k}}} du = J_{k}(h')
\end{multline}
Using a partial fraction decomposition, we can split $J_{k}(h') = J_{1,k}(h') + J_{2,k}(h')$, where
\begin{multline}
J_{1,k}(h') = \frac{1}{h'^{1 - \frac{2}{k}}(h'^{2}+4)} \int_{0}^{1}
\frac{3-2u}{(1 + h'^{2}(1-u)^{2})(1-u)^{1-\frac{1}{k}}u^{1-\frac{1}{k}}} du \\
J_{2,k}(h') = \frac{1}{h'^{1 - \frac{2}{k}}(h'^{2}+4)} \int_{0}^{1}
\frac{2u+1}{(1 + h'^{2}u^{2})(1-u)^{1-\frac{1}{k}}u^{1-\frac{1}{k}}} du
\end{multline}
From now on, we assume that $k \geq 2$. By construction of $J_{1,k}(h')$ and $J_{2,k}(h')$, we see that there exists a constant
$j_{k}>0$ such that
\begin{equation}
J_{k}(h') \leq \frac{j_{k}}{h'^{1 - \frac{2}{k}}(h'^{2}+4)} \label{J_k_maj_frac}
\end{equation}
for all $h' \geq 0$. From (\ref{defin_C3.3}) and (\ref{J_k_maj_frac}), we deduce that $C_{3.3} \leq \sup_{x \geq 0} \tilde{C}_{3.3}(x)$,
where
\begin{equation}
\tilde{C}_{3.3}(x) = (1+x^2) \exp( -\nu x )\int_{0}^{x} \frac{j_{k}
\exp( \nu h' )}{h'^{1 - \frac{2}{k}}(h'^{2}+4)} dh'. \label{defin_tilde_C3.3}
\end{equation}
From L'Hospital rule, we know that
$$
\lim_{x \rightarrow +\infty} \tilde{C}_{3.3}(x) = \lim_{x \rightarrow +\infty} \frac{j_{k}}{x^{1-\frac{2}{k}}}
\frac{ \frac{(1+x^2)^2}{x^2+4}}{\nu (1+x^2) - 2x }
$$
is finite when $k \geq 2$. Therefore, we get a constant $\tilde{C}_{3.3}>0$ such that
\begin{equation}
\sup_{x \geq 0} \tilde{C}_{3.3}(x) \leq \tilde{C}_{3.3}. \label{maj_C_3.3}
\end{equation}
Taking into account the estimates for (\ref{defin_C_3}), (\ref{defin_C_3.1}), (\ref{defin_C3.2}), (\ref{defin_C3.3}),
(\ref{defin_tilde_C3.3}) and (\ref{maj_C_3.3}), we obtain the result (\ref{norm_conv_f_g<norm_f_times_norm_g}) when
$k \geq 2$.\medskip

\noindent In the remaining case $k=1$, from Corollary 4.9 of \cite{cota} one can check the existence of a constant $j_{1}>0$ such that
\begin{equation}
J_{1}(h') \leq \frac{j_{1}}{h'^{2}+1} \label{J_1_maj_frac}
\end{equation}
for all $h' \geq 0$. From (\ref{defin_C3.3}) and (\ref{J_1_maj_frac}), we deduce that $C_{3.3} \leq \sup_{x \geq 0} \tilde{C}_{3.3.1}(x)$,
where
\begin{equation}
\tilde{C}_{3.3.1}(x) = (1+x^2) \exp( -\nu x )\int_{0}^{x} \frac{j_{1}
\exp( \nu h' )}{h'^{2}+1} dh'. \label{defin_tilde_C3.3.1}
\end{equation}
From L'Hospital rule, we know that
$$
\lim_{x \rightarrow +\infty}\tilde{C}_{3.3.1}(x) =
\lim_{x \rightarrow +\infty} \frac{ j_{1}(1+x^2) }{ \nu  (1+x^2) -2x }
$$
is finite. Therefore, we get a constant $\tilde{C}_{3.3.1}>0$ such that
\begin{equation}
\sup_{x \geq 0} \tilde{C}_{3.3.1}(x) \leq \tilde{C}_{3.3.1}. \label{maj_C_3.3.1}
\end{equation}
Taking into account the estimates for (\ref{defin_C_3}), (\ref{defin_C_3.1}), (\ref{defin_C3.2}), (\ref{defin_C3.3}),
(\ref{defin_tilde_C3.3.1}) and (\ref{maj_C_3.3.1}), we obtain the result (\ref{norm_conv_f_g<norm_f_times_norm_g}) for $k=1$.
\end{proof}

\begin{defin} Let $\beta, \mu \in \mathbb{R}$. We denote by
$E_{(\beta,\mu)}$ the vector space of continuous functions $h : \mathbb{R} \rightarrow \mathbb{C}$ such that
$$ ||h(m)||_{(\beta,\mu)} = \sup_{m \in \mathbb{R}} (1+|m|)^{\mu} \exp( \beta |m|) |h(m)| $$
is finite. The space $E_{(\beta,\mu)}$ equipped with the norm $||.||_{(\beta,\mu)}$ is a Banach space.
\end{defin}

\begin{prop} Let $k \geq 1$ be an integer. Let $Q(X),R(X) \in \mathbb{C}[X]$ be polynomials such that
\begin{equation}
\mathrm{deg}(R) \geq \mathrm{deg}(Q) \ \ , \ \ R(im) \neq 0 \label{cond_R_Q}
\end{equation}
for all $m \in \mathbb{R}$. Assume that $\mu > \mathrm{deg}(Q) + 1$. Let $m \mapsto b(m)$ be a continuous function such that
$$
|b(m)| \leq \frac{1}{|R(im)|}
$$
for all $m \in \mathbb{R}$. Then, there exists a constant $C_{4}>0$ (depending on $Q,R,\mu,k,\nu$) such that
\begin{multline}
|| b(m) \int_{0}^{\tau^{k}} (\tau^{k}-s)^{\frac{1}{k}}\int_{-\infty}^{+\infty} f(m-m_{1})Q(im_{1})g(s^{1/k},m_{1})dm_{1}
\frac{ds}{s}||_{(\nu,\beta,\mu,k,\epsilon)}\\
\leq C_{4}|\epsilon| ||f(m)||_{(\beta,\mu)} ||g(\tau,m)||_{(\nu,\beta,\mu,k,\epsilon)}
\label{norm_conv_f_g<norm_f_beta_mu_times_norm_g}
\end{multline}
for all $f(m) \in E_{(\beta,\mu)}$, all $g(\tau,m) \in F_{(\nu,\beta,\mu,k,\epsilon)}^{d}$.
\end{prop}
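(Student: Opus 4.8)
The plan is to follow the same template as the proof of Proposition 3, with the simplification that here only the factor $g$ lives in the $\tau$-dependent space $F_{(\nu,\beta,\mu,k,\epsilon)}^{d}$, whereas $f$ belongs to the space $E_{(\beta,\mu)}$ of Definition 2 and carries no dependence on $\tau$. First I would fix $f(m)\in E_{(\beta,\mu)}$ and $g(\tau,m)\in F_{(\nu,\beta,\mu,k,\epsilon)}^{d}$ and write out the left-hand side of (\ref{norm_conv_f_g<norm_f_beta_mu_times_norm_g}) straight from Definition 1. Then I would multiply and divide by the appropriate weights so that the two quantities $\{(1+|m-m_1|)^{\mu}e^{\beta|m-m_1|}f(m-m_1)\}$ and $\{(1+|m_1|)^{\mu}e^{\beta|m_1|}\frac{1+|s|^{2}/|\epsilon|^{2k}}{|s|^{1/k}/|\epsilon|}e^{-\nu|s|/|\epsilon|^{k}}g(s^{1/k},m_1)\}$ — bounded respectively by $\|f\|_{(\beta,\mu)}$ and $\|g\|_{(\nu,\beta,\mu,k,\epsilon)}$ — appear inside the integrand. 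Pulling these norms out bounds the left-hand side by $C_{4}(\epsilon)\,\|f\|_{(\beta,\mu)}\,\|g\|_{(\nu,\beta,\mu,k,\epsilon)}$ for an explicit constant $C_4(\epsilon)$ collecting the leftover weights and the kernel.

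Next I would estimate $C_4(\epsilon)$ by factoring it as a product $C_4(\epsilon)\le C_{4.1}\,C_{4.2}(\epsilon)$ of a purely $m$-dependent part and a purely $\tau$-dependent part; this factorization is clean here because the bound on $|g(s^{1/k},m_1)|$ splits as a product of an $m_1$-weight and an $s$-weight. Using the polynomial estimates $|Q(im_1)|\le \mathfrak{Q}(1+|m_1|)^{\mathrm{deg}(Q)}$ and $|R(im)|\ge \mathfrak{R}(1+|m|)^{\mathrm{deg}(R)}$ as in (\ref{Q1_Q2_R_deg_order}), together with $|b(m)|\le 1/|R(im)|$ and the triangular inequality $|m|\le |m-m_1|+|m_1|$ to bound the exponential weights by $1$, the $m$-part reads
$$ C_{4.1}=\frac{\mathfrak{Q}}{\mathfrak{R}}\sup_{m\in\mathbb{R}}(1+|m|)^{\mu-\mathrm{deg}(R)}\int_{-\infty}^{+\infty}\frac{dm_1}{(1+|m-m_1|)^{\mu}(1+|m_1|)^{\mu-\mathrm{deg}(Q)}}. $$
Its finiteness is the delicate point, and it follows from the classical convolution estimate of Lemma 4 in \cite{ma2} (see also Lemma 2.2 in \cite{cota2}), which applies precisely because $\mu>\mathrm{deg}(Q)+1$ and $\mathrm{deg}(R)\ge\mathrm{deg}(Q)$ under the hypotheses (\ref{cond_R_Q}).

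Finally, for the $\tau$-dependent part I observe that after the reconstruction above the remaining $s$-integral is exactly the one treated in the proof of Proposition 1 with $\gamma_2=1/k$: setting $\tau=s^{1/k}$ turns the $s$-weight of $g$ into $\frac{|s|^{1/k}/|\epsilon|}{1+|s|^{2}/|\epsilon|^{2k}}\exp(\nu|s|/|\epsilon|^{k})$, so that
$$ C_{4.2}(\epsilon)=\sup_{\tau\in\bar{D}(0,\rho)\cup S_d}\frac{1+|\frac{\tau}{\epsilon}|^{2k}}{|\frac{\tau}{\epsilon}|}\exp(-\nu|\tfrac{\tau}{\epsilon}|^{k})\int_0^{|\tau|^k}\frac{\exp(\nu h/|\epsilon|^{k})}{1+h^{2}/|\epsilon|^{2k}}\frac{h^{1/k-1}}{|\epsilon|}(|\tau|^{k}-h)^{1/k}\,dh $$
coincides with the quantity $C_1(\epsilon)$ of Proposition 1 for $\gamma_2=1/k$. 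The change of variable $h=|\epsilon|^{k}h'$ then extracts a factor $|\epsilon|^{k\gamma_2}=|\epsilon|$ and reduces the supremum to $\sup_{x\ge 0}A(x)$ for the explicit function $A$ of that proof, which is finite by L'Hospital's rule since $\gamma_2=1/k\le 1$; hence $C_{4.2}(\epsilon)\le C_1|\epsilon|$. Multiplying the two bounds yields (\ref{norm_conv_f_g<norm_f_beta_mu_times_norm_g}) with $C_4=C_{4.1}C_1$. Thus the only genuine difficulty is concentrated in the convergence of the one-dimensional $m$-integral defining $C_{4.1}$, while the $\tau$-estimate is routine and identical to the one already established in Proposition 1.
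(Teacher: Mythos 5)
Your proposal is correct and follows essentially the same route as the paper's own proof, which likewise reduces the $m$-integral to the constant already controlled in Proposition 3 (via Lemma 4 of the cited reference, using $\mu>\mathrm{deg}(Q)+1$) and identifies the $\tau$-integral with the quantity $C_{1}(\epsilon)$ of Proposition 1 for $\gamma_{2}=1/k$, whence the factor $|\epsilon|^{k\gamma_{2}}=|\epsilon|$. The only difference is the immaterial swap of the labels $C_{4.1}$ and $C_{4.2}$ between the $m$-part and the $\tau$-part.
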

\begin{proof} The proof follows the same lines of arguments as those of Propositions 1 and 3. Let
$f(m) \in E_{(\beta,\mu)}$, $g(\tau,m) \in F_{(\nu,\beta,\mu,k,\epsilon)}^{d}$. We can write
\begin{multline}
N_{2} := || b(m) \int_{0}^{\tau^{k}} (\tau^{k}-s)^{\frac{1}{k}}\int_{-\infty}^{+\infty} f(m-m_{1})Q(im_{1})g(s^{1/k},m_{1})dm_{1}
\frac{ds}{s}||_{(\nu,\beta,\mu,k,\epsilon)} \\
= \sup_{\tau \in \bar{D}(0,\rho) \cup S_{d},m \in \mathbb{R}} (1+|m|)^{\mu}
\frac{1 + |\frac{\tau}{\epsilon}|^{2k}}{|\frac{\tau}{\epsilon}|}\exp( \beta|m| - \nu|\frac{\tau}{\epsilon}|^{k} )\\
\times|b(m) \int_{0}^{\tau^k}\int_{-\infty}^{+\infty} \{ (1 + |m-m_{1}|)^{\mu}
\exp( \beta|m-m_{1}| )f(m-m_{1}) \} \\
\times \{ (1+|m_{1})^{\mu} \exp( \beta |m_{1}|) 
\exp( -\frac{\nu|s|}{|\epsilon|^k} ) \frac{1 + \frac{|s|^2}{|\epsilon|^{2k}}}{\frac{|s|^{1/k}}{|\epsilon|}}g(s^{1/k},m_{1}) \}
\times \mathcal{D}(\tau,s,m,m_{1},\epsilon) dm_{1}ds |
\end{multline}
where
$$
\mathcal{D}(\tau,s,m,m_{1},\epsilon) =
\frac{ Q(im_{1})e^{-\beta |m_{1}|} e ^{-\beta|m-m_{1}|} }{(1 + |m-m_{1}|)^{\mu} (1 + |m_{1}|)^{\mu}} \times
\frac{ \exp( \frac{\nu |s|}{|\epsilon|^k} ) }{1 + \frac{|s|^2}{|\epsilon|^{2k}}} \frac{ |s|^{1/k} }{|\epsilon|}
(\tau^{k}-s)^{1/k} \frac{1}{s}
$$
Again, we know that there exist constants $\mathfrak{Q},\mathfrak{R}>0$ such that
$$ |Q(im_{1})| \leq \mathfrak{Q}(1+|m_{1}|)^{\mathrm{deg}(Q)} \ \ , \ \ |R(im)| \geq \mathfrak{R}(1+|m|)^{\mathrm{deg}(R)} $$
for all $m,m_{1} \in \mathbb{R}$. By means of the triangular inequality $|m| \leq |m_{1}|+|m-m_{1}|$, we get that
\begin{equation}
N_{2} \leq C_{4.1}(\epsilon)C_{4.2}||f(m)||_{(\beta,\mu)}||g(\tau,m)||_{(\nu,\beta,\mu,k,\epsilon)} \label{norm_b_int_conv_f_Q_g<}
\end{equation}
where
$$ C_{4.1}(\epsilon) = \sup_{\tau \in \bar{D}(0,\rho) \cup S_{d}} \frac{1 + |\frac{\tau}{\epsilon}|^{2k}}{|\frac{\tau}{\epsilon}|}
\exp( - \nu|\frac{\tau}{\epsilon}|^{k} ) \int_{0}^{|\tau|^k} \frac{ \exp( \nu h/|\epsilon|^k) }{1 + \frac{h^2}{|\epsilon|^{2k}}}
\frac{ h^{\frac{1}{k}-1} }{ |\epsilon| } (|\tau|^{k} - h)^{1/k} dh
$$
and
$$
C_{4.2} = \frac{\mathfrak{Q}}{\mathfrak{R}} \sup_{m \in \mathbb{R}} (1 + |m|)^{\mu - \mathrm{deg}(R)}
\int_{-\infty}^{+\infty} \frac{1}{(1 + |m-m_{1}|)^{\mu}(1+|m_{1}|)^{\mu - \mathrm{deg}(Q)}} dm_{1}.
$$
From the estimates (\ref{C1_chg_var}) and (\ref{tilde_A_1_bounded}), we know that there exists a constant $C_{4.1}>0$ such that
\begin{equation}
C_{4.1}(\epsilon) \leq C_{4.1}|\epsilon| \label{C41_epsilon_maj}
\end{equation}
and from the estimates for (\ref{defin_C_3.1}), we know that $C_{4.2}$ is finite under the assumption (\ref{cond_R_Q}) provided that
$\mu > \mathrm{deg}(Q)+1$. Finally, gathering this latter bound estimates together with (\ref{norm_b_int_conv_f_Q_g<}) and
(\ref{C41_epsilon_maj}) yields the result (\ref{norm_conv_f_g<norm_f_beta_mu_times_norm_g}).
\end{proof}

In the next proposition, we show that $(E_{(\beta,\mu)},||.||_{(\beta,\mu)})$ is a Banach algebra for some noncommutative
product $\star$ introduced below.

\begin{prop} Let $Q_{1}(X),Q_{2}(X),R(X) \in \mathbb{C}[X]$ be polynomials such that
\begin{equation}
\mathrm{deg}(R) \geq \mathrm{deg}(Q_{1}) \ \ , \ \ \mathrm{deg}(R) \geq \mathrm{deg}(Q_{2}) \ \ , \ \ R(im) \neq 0,
\label{cond_R_Q1_Q2}
\end{equation}
for all $m \in \mathbb{R}$. Assume that $\mu > \max( \mathrm{deg}(Q_{1})+1, \mathrm{deg}(Q_{2})+1 )$. Then, there exists a
constant $C_{5}>0$ (depending on $Q_{1},Q_{2},R,\mu$) such that
\begin{multline}
|| \frac{1}{R(im)} \int_{-\infty}^{+\infty} Q_{1}(i(m-m_{1})) f(m-m_{1}) Q_{2}(im_{1})g(m_{1}) dm_{1} ||_{(\beta,\mu)}\\
\leq C_{5} ||f(m)||_{(\beta,\mu)}||g(m)||_{(\beta,\mu)}
\end{multline}
for all $f(m),g(m) \in E_{(\beta,\mu)}$. Therefore, $(E_{(\beta,\mu)},||.||_{(\beta,\mu)})$ becomes a Banach algebra for the product
$\star$ defined by
$$ f \star g (m) = \frac{1}{R(im)} \int_{-\infty}^{+\infty} Q_{1}(i(m-m_{1})) f(m-m_{1}) Q_{2}(im_{1})g(m_{1}) dm_{1}.$$
As a particular case, when $f,g \in E_{(\beta,\mu)}$ with $\beta>0$ and $\mu>1$, the classical convolution
product
$$ f \ast g (m) = \int_{-\infty}^{+\infty} f(m-m_{1})g(m_{1}) dm_{1} $$
belongs to $E_{(\beta,\mu)}$.
\end{prop}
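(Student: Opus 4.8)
The plan is to follow the same scheme used in the proofs of Propositions 1 and 3, reducing the weighted norm of the bilinear expression to a single scalar supremum controlled by a convolution integral on the real line. First I would unwind the definition of $||\cdot||_{(\beta,\mu)}$: writing $N$ for the left-hand side,
$$ N = \sup_{m \in \mathbb{R}} (1+|m|)^{\mu} e^{\beta|m|} \frac{1}{|R(im)|} \left| \int_{-\infty}^{+\infty} Q_{1}(i(m-m_{1}))f(m-m_{1})Q_{2}(im_{1})g(m_{1}) \, dm_{1} \right|. $$
Inside the integral I would insert the trivial bounds coming from the definitions of the norms, namely $|f(m-m_{1})| \leq ||f||_{(\beta,\mu)}(1+|m-m_{1}|)^{-\mu}e^{-\beta|m-m_{1}|}$ and $|g(m_{1})| \leq ||g||_{(\beta,\mu)}(1+|m_{1}|)^{-\mu}e^{-\beta|m_{1}|}$, together with the polynomial estimates $|Q_{1}(i(m-m_{1}))| \leq \mathfrak{Q}_{1}(1+|m-m_{1}|)^{\mathrm{deg}(Q_{1})}$, $|Q_{2}(im_{1})| \leq \mathfrak{Q}_{2}(1+|m_{1}|)^{\mathrm{deg}(Q_{2})}$ and $|R(im)| \geq \mathfrak{R}(1+|m|)^{\mathrm{deg}(R)}$. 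The lower bound on $|R(im)|$ is exactly where the hypothesis $R(im) \neq 0$ enters: the continuous function $m \mapsto |R(im)|/(1+|m|)^{\mathrm{deg}(R)}$ is strictly positive on $\mathbb{R}$ and tends to the modulus of the leading coefficient of $R$ at infinity, hence is bounded below by some $\mathfrak{R}>0$.

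Second, I would cancel the exponential weights. The triangle inequality $|m| \leq |m-m_{1}| + |m_{1}|$ yields $e^{\beta|m|}e^{-\beta|m-m_{1}|}e^{-\beta|m_{1}|} \leq 1$, so after collecting the powers of $(1+|\cdot|)$ the problem collapses to showing that
$$ C_{5} := \frac{\mathfrak{Q}_{1}\mathfrak{Q}_{2}}{\mathfrak{R}} \sup_{m \in \mathbb{R}} (1+|m|)^{\mu - \mathrm{deg}(R)} \int_{-\infty}^{+\infty} \frac{dm_{1}}{(1+|m-m_{1}|)^{\mu - \mathrm{deg}(Q_{1})}(1+|m_{1}|)^{\mu - \mathrm{deg}(Q_{2})}} $$
is finite; this is precisely the constant $C_{3.1}$ appearing in (\ref{defin_C_3.1}), so the remaining work is the finiteness of that supremum.

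The main obstacle is this last estimate, which I would settle by a split-domain argument on the integration variable. Set $a = \mu - \mathrm{deg}(Q_{1})$ and $b = \mu - \mathrm{deg}(Q_{2})$; by hypothesis $a,b > 1$. On the region $|m_{1}| \leq |m|/2$ one has $|m-m_{1}| \geq |m|/2$, so $(1+|m-m_{1}|)^{-a} \leq C(1+|m|)^{-a}$ while $\int_{\mathbb{R}}(1+|m_{1}|)^{-b}\,dm_{1}$ converges; this part is bounded by $C(1+|m|)^{-a}$. Symmetrically, on $|m_{1}| \geq |m|/2$ one obtains a bound $C(1+|m|)^{-b}$. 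Hence the integral is at most $C(1+|m|)^{-\min(a,b)} = C(1+|m|)^{-(\mu - \max(\mathrm{deg}(Q_{1}),\mathrm{deg}(Q_{2})))}$, and multiplying by $(1+|m|)^{\mu - \mathrm{deg}(R)}$ gives $(1+|m|)^{\max(\mathrm{deg}(Q_{1}),\mathrm{deg}(Q_{2})) - \mathrm{deg}(R)} \leq 1$ thanks to the assumption $\mathrm{deg}(R) \geq \max(\mathrm{deg}(Q_{1}),\mathrm{deg}(Q_{2}))$. This proves $C_{5} < \infty$ and hence the claimed bilinear estimate; since $||f \star g||_{(\beta,\mu)} \leq C_{5}||f||_{(\beta,\mu)}||g||_{(\beta,\mu)}$ exhibits $\star$ as a continuous bilinear product, $(E_{(\beta,\mu)},||\cdot||_{(\beta,\mu)})$ is a Banach algebra. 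Finally, the particular case of the ordinary convolution $f \ast g$ is recovered by taking $Q_{1} = Q_{2} = R = 1$, i.e. $\mathrm{deg}(Q_{1}) = \mathrm{deg}(Q_{2}) = \mathrm{deg}(R) = 0$, for which the very same computation requires only $\mu > 1$.
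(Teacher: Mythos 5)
Your proposal is correct and follows essentially the same route as the paper: unwind the $||\cdot||_{(\beta,\mu)}$ norm, insert the weighted bounds on $f$, $g$ and the polynomial growth/decay estimates for $Q_{1}$, $Q_{2}$, $R$, cancel the exponentials via $|m|\leq|m-m_{1}|+|m_{1}|$, and reduce everything to the finiteness of the supremum defining $C_{5}$ (the paper's $C_{3.1}$). The only difference is that the paper delegates that last finiteness claim to Lemma 4 of \cite{ma2} and Lemma 2.2 of \cite{cota2}, whereas you prove it directly with the split of the $m_{1}$-integral over $|m_{1}|\leq|m|/2$ and $|m_{1}|\geq|m|/2$; your argument is correct and makes the proof self-contained.
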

\begin{proof} The proof is similar to the one of Proposition 3. Let $f(m),g(m) \in E_{(\beta,\mu)}$. We write
\begin{multline}
|| \frac{1}{R(im)} \int_{-\infty}^{+\infty} Q_{1}(i(m-m_{1}))f(m-m_{1})Q_{2}(im_{1})g(m_{1}) dm_{1} ||_{(\beta,\mu)}\\
= \sup_{m \in \mathbb{R}} (1+|m|)^{\mu} e^{\beta |m|} |\frac{1}{R(im)} \int_{-\infty}^{+\infty} \{ (1 + |m-m_{1}|)^{\mu}
e^{\beta|m-m_{1}|}f(m-m_{1}) \} \\
\times \{ (1+|m_{1}|)^{\mu} e^{\beta|m_{1}|}g(m_{1}) \} \times \mathcal{E}(m,m_{1}) dm_{1} |
\end{multline}
where
$$ \mathcal{E}(m,m_{1}) = \frac{ e^{-\beta|m-m_{1}|} e^{-\beta|m_{1}|} }{(1+|m-m_{1}|)^{\mu}(1+|m_{1}|)^{\mu}}
Q_{1}(i(m-m_{1}))Q_{2}(im_{1}). $$
Using the triangular inequality $|m| \leq |m_{1}|+|m-m_{1}|$ and the estimates in (\ref{Q1_Q2_R_deg_order}), we get that
\begin{multline}
|| \frac{1}{R(im)} \int_{-\infty}^{+\infty} Q_{1}(i(m-m_{1}))f(m-m_{1})Q_{2}(im_{1})g(m_{1}) dm_{1} ||_{(\beta,\mu)}\\
\leq C_{5} ||f(m)||_{(\beta,\mu)} ||g(m)||_{(\beta,\mu)}
\end{multline}
where
$$
C_{5} = \frac{\mathfrak{Q}_{1}\mathfrak{Q}_{2}}{\mathfrak{R}}
\sup_{m \in \mathbb{R}} (1+|m|)^{\mu - \mathrm{deg}(R)} \int_{-\infty}^{+\infty}
\frac{1}{(1+|m-m_{1}|)^{\mu - \mathrm{deg}(Q_{1})}(1+|m_{1}|)^{\mu - \mathrm{deg}(Q_{2})}} dm_{1}
$$
which is finite whenever $\mu > \max( \mathrm{deg}(Q_{1})+1, \mathrm{deg}(Q_{2})+1 )$ provided that
(\ref{cond_R_Q1_Q2}) holds as explained in Proposition 3 (see (\ref{defin_C_3.1})).
\end{proof}

\section{Laplace transform, asymptotic expansions and Fourier transform}

We give a definition of $k-$Borel summability of formal series with coefficients in a Banach space which is a slightly modified version of
the one given in \cite{ba}, Section 3.2, in order to fit our necessities.

\begin{defin} Let $k \geq 1$ be an integer. Let $m_{k}(n)$ be the sequence defined by
$$ m_{k}(n) = \Gamma(\frac{n}{k}) =  \int_{0}^{+\infty} t^{\frac{n}{k}-1} e^{-t} dt $$
for all $n \geq 1$. A formal series
$$\hat{X}(T) = \sum_{n=1}^{\infty}  a_{n}T^{n} \in T\mathbb{E}[[T]]$$
with coefficients in a Banach space $( \mathbb{E}, ||.||_{\mathbb{E}} )$ is said to be $m_{k}-$summable
with respect to $t$ in the direction $d \in [0,2\pi)$ if \medskip

{\bf i)} there exists $\rho \in \mathbb{R}_{+}$ such that the following formal series, called a formal $m_{k}-$Borel transform of
$\hat{X}$ 
$$ \mathcal{B}_{m_k}(\hat{X})(\tau) = \sum_{n=1}^{\infty} \frac{ a_{n} }{ \Gamma(\frac{n}{k}) } \tau^{n}
\in \tau\mathbb{E}[[\tau]],$$
is absolutely convergent for $|\tau| < \rho$. \medskip

{\bf ii)} there exists $\delta > 0$ such that the series $\mathcal{B}_{m_k}(\hat{X})(\tau)$ can be analytically continued with
respect to $\tau$ in a sector
$S_{d,\delta} = \{ \tau \in \mathbb{C}^{\ast} : |d - \mathrm{arg}(\tau) | < \delta \} $. Moreover, there exist $C >0$ and $K >0$
such that
$$ ||\mathcal{B}_{m_k}(\hat{X})(\tau)||_{\mathbb{E}}
\leq C e^{ K|\tau|^{k} } $$
for all $\tau \in S_{d, \delta}$.
\end{defin}
If this is so, the vector valued Laplace transform of $\mathcal{B}_{k}(\hat{X})(\tau)$ in the direction $d$ is defined by
$$ \mathcal{L}^{d}_{m_k}(\mathcal{B}(\hat{X}))(T) = k \int_{L_{\gamma}}
\mathcal{B}_{k}(\hat{X})(u) e^{ - ( u/T )^{k} } \frac{d u}{u},$$
along a half-line $L_{\gamma} = \mathbb{R}_{+}e^{i\gamma} \subset S_{d,\delta} \cup \{ 0 \}$, where $\gamma$ depends on
$T$ and is chosen in such a way that $\cos(k(\gamma - \mathrm{arg}(T))) \geq \delta_{1} > 0$, for some fixed $\delta_{1}$.
The function $\mathcal{L}^{d}_{m_k}(\mathcal{B}(\hat{X}))(T)$ is well defined, holomorphic and bounded in any sector
$$ S_{d,\theta,R^{1/k}} = \{ T \in \mathbb{C}^{\ast} : |T| < R^{1/k} \ \ , \ \ |d - \mathrm{arg}(T) | < \theta/2 \},$$
where $\frac{\pi}{k} < \theta < \frac{\pi}{k} + 2\delta$ and
$0 < R < \delta_{1}/K$. This function is called the $m_{k}-$sum of the formal series $\hat{X}(T)$ in the direction $d$.\medskip

\noindent We now state some elementary properties concerning the $m_{k}-$sums of formal power series.\\

\noindent 1) The function $\mathcal{L}^{d}_{m_k}(\mathcal{B}_{k}(\hat{X}))(T)$ has the formal series $\hat{X}(T)$ as
Gevrey asymptotic
expansion of order $1/k$ with respect to $t$ on $S_{d,\theta,R^{1/k}}$. This means that for all
$\frac{\pi}{k} < \theta_{1} < \theta$, there exist $C,M > 0$
such that
\begin{equation}
 ||\mathcal{L}^{d}_{m_k}(\mathcal{B}_{k}(\hat{X}))(T) - \sum_{p=1}^{n-1} a_p T^{p}||_{\mathbb{E}} \leq
CM^{n}\Gamma(1+\frac{n}{k})|T|^{n} \label{Laplace_k_Gevrey_ae}
\end{equation}
for all $n \geq 2$, all $T \in S_{d,\theta_{1},R^{1/k}}$. Moreover, from Watson's lemma (see Proposition 11 p. 75 in \cite{ba}), we get
that $\mathcal{L}^{d}_{m_k}(\mathcal{B}_{k}(\hat{X}))(T)$ is the unique holomorphic function that satisfies the estimates
(\ref{Laplace_k_Gevrey_ae}) on the sectors $S_{d,\theta_{1},R^{1/k}}$ with large aperture $\theta_{1} > \frac{\pi}{k}$.\medskip

\noindent 2) Let us assume that $( \mathbb{E}, ||.||_{\mathbb{E}} )$ also has the structure of a Banach algebra for a product $\star$.
Let $\hat{X}_{1}(T),\hat{X}_{2}(T) \in T\mathbb{E}[[T]]$ be $m_{k}-$summable formal power series in direction
$d$. Let $q_{1} \geq q_{2} \geq 1$ be integers. We assume that 
$\hat{X}_{1}(T)+\hat{X}_{2}(T)$, $\hat{X}_{1}(T) \star \hat{X}_{2}(T)$ and
$T^{q_1}\partial_{T}^{q_2}\hat{X}_{1}(T)$, which are elements of $T\mathbb{E}[[T]]$, are $m_{k}-$summable in direction $d$.
Then, the following equalities
\begin{multline}
\mathcal{L}^{d}_{m_k}(\mathcal{B}_{k}(\hat{X}_{1}))(T) +
\mathcal{L}^{d}_{m_k}(\mathcal{B}_{k}(\hat{X}_{2}))(T) =
\mathcal{L}^{d}_{m_k}(\mathcal{B}_{k}(\hat{X}_{1} + \hat{X}_{2}))(T),\\
\mathcal{L}^{d}_{m_k}(\mathcal{B}_{k}(\hat{X}_{1}))(T) \star
\mathcal{L}^{d}_{m_k}(\mathcal{B}_{k}(\hat{X}_{2}))(T) =
\mathcal{L}^{d}_{m_k}(\mathcal{B}_{k}(\hat{X}_{1} \star \hat{X}_{2}))(T)\\
T^{q_1}\partial_{T}^{q_2}\mathcal{L}^{d}_{m_k}(\mathcal{B}_{k}(\hat{X}_{1}))(T) =
\mathcal{L}^{d}_{m_k}(\mathcal{B}_{k}(T^{q_1}\partial_{T}^{q_2}\hat{X}_{1}))(T) \label{sum_prod_deriv_m_k_sum}
\end{multline}
hold for all $T \in S_{d,\theta,R^{1/k}}$. These equalities are consequence of the unicity of the function having a given
Gevrey expansion of order $1/k$ in large sectors as stated above in 1) and from the fact that the set of holomorphic functions having
Gevrey asymptotic expansion of order $1/k$ on a sector with values in the Banach algebra $\mathbb{E}$ form a
differential algebra (meaning that this set is stable with respect to the sum and product of functions and derivation in the variable
$T$) (see Theorem 18,19 and 20 in \cite{ba}).\medskip

In the next proposition, we give some identities for the $m_{k}-$Borel transform that will be useful in the sequel.
\begin{prop} Let $\hat{f}(t) = \sum_{ n \geq 1} f_{n}t^{n}$, $\hat{g}(t) = \sum_{n \geq 1} g_{n}t^{n}$ be formal series
whose coefficients $f_{n},g_{n}$ belong to some Banach space $(\mathbb{E},||.||_{\mathbb{E}})$. We assume that
$(\mathbb{E},||.||_{\mathbb{E}})$ is a Banach algebra for some product $\star$. Let $k,m \geq 1$ be integers.
The following formal identities hold.
\begin{equation}
\mathcal{B}_{m_k}(t^{k+1}\partial_{t}\hat{f}(t))(\tau) = k \tau^{k} \mathcal{B}_{m_k}(\hat{f}(t))(\tau) \label{Borel_diff}
\end{equation}
\begin{equation}
\mathcal{B}_{m_k}(t^{m}\hat{f}(t))(\tau) = \frac{\tau^{k}}{\Gamma(\frac{m}{k})}
\int_{0}^{\tau^{k}} (\tau^{k} - s)^{\frac{m}{k}-1} \mathcal{B}_{m_k}(\hat{f}(t))(s^{1/k}) \frac{ds}{s} \label{Borel_mult_monom}
\end{equation}
and
\begin{equation}
\mathcal{B}_{m_k}( \hat{f}(t) \star \hat{g}(t) )(\tau) = \tau^{k}\int_{0}^{\tau^{k}}
\mathcal{B}_{m_k}(\hat{f}(t))((\tau^{k}-s)^{1/k}) \star \mathcal{B}_{m_k}(\hat{g}(t))(s^{1/k}) \frac{1}{(\tau^{k}-s)s} ds
\label{Borel_product}
\end{equation}
\end{prop}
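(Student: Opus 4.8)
Since the three identities are purely formal, the plan is to expand both sides of each as power series in $\tau$ with coefficients in $\mathbb{E}$, relying only on the definition $\mathcal{B}_{m_k}(\sum_{n \geq 1} a_{n} t^{n})(\tau) = \sum_{n \geq 1} \frac{a_{n}}{\Gamma(n/k)}\tau^{n}$ given in Definition 3, and then to compare the coefficient of each monomial $\tau^{n}$. Two elementary properties of the Gamma function will carry the whole argument: the functional equation $\Gamma(\frac{n}{k}+1) = \frac{n}{k}\Gamma(\frac{n}{k})$, and the Beta integral
$$ \int_{0}^{\tau^{k}} (\tau^{k}-s)^{a-1} s^{b-1}\, ds = \tau^{k(a+b-1)}\frac{\Gamma(a)\Gamma(b)}{\Gamma(a+b)}, $$
which I would establish by the substitution $s = \tau^{k}u$ followed by the standard evaluation $\int_{0}^{1}(1-u)^{a-1}u^{b-1}\,du = \Gamma(a)\Gamma(b)/\Gamma(a+b)$.

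For the identity (\ref{Borel_diff}), I would first write $t^{k+1}\partial_{t}\hat{f}(t) = \sum_{n \geq 1} n f_{n} t^{n+k}$, so that its $m_{k}$-Borel transform carries the coefficient $n f_{n}/\Gamma(\frac{n+k}{k})$ in front of $\tau^{n+k}$. The functional equation gives $\Gamma(\frac{n+k}{k}) = \Gamma(\frac{n}{k}+1) = \frac{n}{k}\Gamma(\frac{n}{k})$, whence this coefficient collapses to $k f_{n}/\Gamma(\frac{n}{k})$; pulling out the common factor $\tau^{k}$ then reproduces exactly $k\tau^{k}\mathcal{B}_{m_k}(\hat{f})(\tau)$.

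For (\ref{Borel_mult_monom}), I would insert $\mathcal{B}_{m_k}(\hat{f})(s^{1/k}) = \sum_{n \geq 1} \frac{f_{n}}{\Gamma(n/k)} s^{n/k}$ into the integral on the right and interchange summation with integration, which is legitimate at the level of formal series since we compare coefficients of a fixed power of $\tau$. Each term then produces the integral $\int_{0}^{\tau^{k}}(\tau^{k}-s)^{\frac{m}{k}-1} s^{\frac{n}{k}-1}\, ds$, which by the Beta formula above (with $a=m/k$, $b=n/k$) equals $\tau^{m+n-k}\Gamma(m/k)\Gamma(n/k)/\Gamma(\frac{m+n}{k})$. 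Multiplying by the prefactor $\tau^{k}/\Gamma(m/k)$ cancels both $\Gamma(m/k)$ and $\Gamma(n/k)$, leaving $\sum_{n \geq 1} f_{n}\tau^{m+n}/\Gamma(\frac{m+n}{k})$, which is precisely $\mathcal{B}_{m_k}(t^{m}\hat{f})(\tau)$. The identity (\ref{Borel_product}) proceeds identically once $\hat{f}(t)\star\hat{g}(t)$ is read as the Cauchy product $\sum_{n \geq 2}\bigl(\sum_{n_{1}+n_{2}=n} f_{n_{1}}\star g_{n_{2}}\bigr) t^{n}$ in $T\mathbb{E}[[T]]$: substituting the series for $\mathcal{B}_{m_k}(\hat{f})((\tau^{k}-s)^{1/k})$ and $\mathcal{B}_{m_k}(\hat{g})(s^{1/k})$ into the double integral, the same Beta evaluation (now with $a=n_{1}/k$, $b=n_{2}/k$) collapses each term to $\tau^{n_{1}+n_{2}}/\Gamma(\frac{n_{1}+n_{2}}{k})$ times the $\star$ product of the coefficients, and regrouping the double sum according to $n = n_{1}+n_{2}$ returns the left-hand side.

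These manipulations are routine, so I do not expect a genuine obstacle; the only two points that require attention are the formal interchange of the sums with the $\tau$-integral, which is harmless because the verification is carried out coefficient by coefficient, and, in (\ref{Borel_product}), the preservation of the order in the noncommutative product $\star$. The latter causes no difficulty since $\star$ is applied only to the pair $(f_{n_{1}}, g_{n_{2}})$ and is never interchanged with the scalar Gamma and Beta factors, so its ordering is carried through the computation unchanged.
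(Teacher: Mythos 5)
Your proposal is correct and follows essentially the same route as the paper: expand both sides as formal series, use the functional equation $\Gamma(\frac{n}{k}+1)=\frac{n}{k}\Gamma(\frac{n}{k})$ for the first identity, and the Beta integral $\int_{0}^{\tau^{k}}(\tau^{k}-s)^{a-1}s^{b-1}ds=\tau^{k(a+b-1)}\Gamma(a)\Gamma(b)/\Gamma(a+b)$ for the other two, comparing coefficients termwise. The only cosmetic difference is that you verify the second and third identities from right to left whereas the paper substitutes the Beta formula into the left-hand expansion, which is the same computation read in the opposite direction.
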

\begin{proof} First, we show (\ref{Borel_diff}). By definition, we have that
\begin{equation}
\mathcal{B}_{m_k}(\frac{t^{k+1}}{k}\partial_{t}\hat{f}(t))(\tau) = \sum_{n \geq 1}
\frac{ \frac{n}{k} f_{n} }{ \Gamma( \frac{n}{k} + 1) } \tau^{n+k} \label{expansion_Borel_diff} 
\end{equation}
By application of the addition formula for the Gamma function which yields
$\Gamma( \frac{n}{k} + 1) = \frac{n}{k} \Gamma( \frac{n}{k} )$ for any $n \geq 1$, we deduce
(\ref{Borel_diff}) from (\ref{expansion_Borel_diff}).

Now, we prove (\ref{Borel_mult_monom}). By definition, we can write
\begin{equation}
\mathcal{B}_{m_k}( t^{m} \hat{f}(t) )(\tau) =\frac{1}{\Gamma( \frac{m}{k} )}
\sum_{n \geq 1} \frac{f_n}{\Gamma( \frac{n}{k} )}
\frac{ \Gamma(\frac{m}{k}) \Gamma(\frac{n}{k}) }{\Gamma(\frac{m+n}{k})} \tau^{m+n}. \label{expansion_Borel_mult_monom}
\end{equation}
Using the Beta integral formula (see Appendix B in \cite{ba2}), we can write
\begin{equation}
\frac{ \Gamma( \frac{m}{k} ) \Gamma( \frac{n}{k} ) }{\Gamma( \frac{m+n}{k} )} =
\frac{\tau^{k}}{\tau^{m+n}} \int_{0}^{\tau^{k}} (\tau^{k}-s)^{\frac{m}{k} - 1} s^{\frac{n}{k}-1} ds
\label{Beta_integral_monom}
\end{equation}
for any $m,n \geq 1$. Plugging (\ref{Beta_integral_monom}) into (\ref{expansion_Borel_mult_monom}) yields (\ref{Borel_mult_monom}).

Finally, we show (\ref{Borel_product}). By definition, we have
\begin{equation}
\mathcal{B}_{m_k}( \hat{f}(t) \star \hat{g}(t) )(\tau) = \sum_{n \geq 2} (\sum_{p+q=n} \frac{f_p}{\Gamma(\frac{p}{k})} \star
\frac{g_q}{\Gamma(\frac{q}{k})} \times \frac{\Gamma(\frac{p}{k})\Gamma(\frac{q}{k})}{\Gamma(\frac{n}{k})}) \tau^{n}
\label{expansion_Borel_product}
\end{equation}
Using again the Beta integral formula, we can write
\begin{equation}
\frac{\Gamma(\frac{p}{k})\Gamma(\frac{q}{k})}{\Gamma(\frac{n}{k})} = \frac{\tau^{k}}{\tau^{n}}
\int_{0}^{\tau^k} (\tau^{k}-s)^{\frac{p}{k}-1} s^{\frac{q}{k}-1} ds \label{Beta_integral_product}
\end{equation}
when $p+q=n$ and $p,q \geq 1$. By the substitution of (\ref{Beta_integral_product}) into (\ref{expansion_Borel_product}),
we deduce (\ref{Borel_product}).
\end{proof}

In the following proposition, we recall some properties of the inverse Fourier transform
\begin{prop}
Let $f \in E_{(\beta,\mu)}$ with $\beta > 0$, $\mu > 1$. The inverse Fourier transform of $f$ is defined by
$$ \mathcal{F}^{-1}(f)(x) = \frac{1}{ (2\pi)^{1/2} } \int_{-\infty}^{+\infty} f(m) \exp( ixm ) dm $$
for all $x \in \mathbb{R}$. The function $\mathcal{F}^{-1}(f)$ extends to an analytic function on the strip
\begin{equation}
H_{\beta} = \{ z \in \mathbb{C} / |\mathrm{Im}(z)| < \beta \}. \label{strip_H_beta}
\end{equation}
Let $\phi(m) = im f(m) \in E_{(\beta,\mu - 1)}$. Then, we have
\begin{equation}
\partial_{z} \mathcal{F}^{-1}(f)(z) = \mathcal{F}^{-1}(\phi)(z) \label{dz_fourier}
\end{equation}
for all $z \in H_{\beta}$.\\
Let $g \in E_{(\beta,\mu)}$ and let $\psi(m) = f \ast g(m)$, the convolution product of $f$ and $g$, for all $m \in \mathbb{R}$.
From Proposition 5, we know that $\psi \in E_{(\beta,\mu)}$. Moreover, we have
\begin{equation}
\mathcal{F}^{-1}(f)(z)\mathcal{F}^{-1}(g)(z) = \mathcal{F}^{-1}(\psi)(z) \label{prod_fourier}
\end{equation}
for all $z \in H_{\beta}$.
\end{prop}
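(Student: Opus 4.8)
The plan is to prove the three assertions in turn, each of them resting on the exponential decay encoded in the norm $||\cdot||_{(\beta,\mu)}$. The crucial elementary observation is that for $z = x + iy \in H_{\beta}$ (so $|y| < \beta$) one has $|\exp(izm)| = \exp(-ym) \leq \exp(|y|\,|m|)$, which combined with the bound $|f(m)| \leq ||f||_{(\beta,\mu)} (1+|m|)^{-\mu} e^{-\beta|m|}$ coming from $f \in E_{(\beta,\mu)}$ yields
$$ |f(m)\exp(izm)| \leq ||f||_{(\beta,\mu)} (1+|m|)^{-\mu} e^{-(\beta - |y|)|m|}. $$
Since $\beta - |y| > 0$, the right-hand side is integrable on $\mathbb{R}$, and the estimate is uniform for $z$ in any compact subset of $H_{\beta}$ (on which $|y|$ stays bounded away from $\beta$). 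Hence the integral defining $\mathcal{F}^{-1}(f)$ converges locally uniformly on $H_{\beta}$; as $z \mapsto f(m)\exp(izm)$ is entire for each fixed $m$, an application of Morera's theorem together with Fubini (equivalently, of the holomorphy theorem for parameter integrals) shows that $\mathcal{F}^{-1}(f)$ extends holomorphically to the strip $H_{\beta}$.

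For the identity (\ref{dz_fourier}) I would differentiate under the integral sign, using $\partial_{z}\exp(izm) = im\exp(izm)$. This interchange is legitimate because $\phi(m) = imf(m)$ satisfies $|\phi(m)| \leq ||f||_{(\beta,\mu)}\,|m|\,(1+|m|)^{-\mu} e^{-\beta|m|}$, so that $\phi \in E_{(\beta,\mu-1)}$ and the computation of the first step produces the same locally uniform integrable majorant for $\phi(m)\exp(izm)$ on $H_{\beta}$. Swapping $\partial_{z}$ with $\int_{-\infty}^{+\infty}$ then gives directly $\partial_{z}\mathcal{F}^{-1}(f)(z) = \frac{1}{(2\pi)^{1/2}}\int_{-\infty}^{+\infty} imf(m)\exp(izm)\,dm = \mathcal{F}^{-1}(\phi)(z)$.

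For the product formula (\ref{prod_fourier}) I would start from the product of the two defining integrals and express it as the double integral
$$ \mathcal{F}^{-1}(f)(z)\mathcal{F}^{-1}(g)(z) = \frac{1}{2\pi}\int_{-\infty}^{+\infty}\int_{-\infty}^{+\infty} f(m_{1})g(m_{2})\exp(iz(m_{1}+m_{2}))\,dm_{1}\,dm_{2}. $$
The absolute convergence of this double integral on $H_{\beta}$, which follows from the product of the two majorants of the first step, justifies Fubini's theorem. I would then carry out the change of variable $m = m_{1}+m_{2}$ at fixed $m_{2}$, so that the inner integral becomes $\int_{-\infty}^{+\infty} f(m-m_{2})g(m_{2})\,dm_{2} = \psi(m)$ (up to the normalizing constant), and recognise the outer integral as $\mathcal{F}^{-1}(\psi)(z)$; that $\psi \in E_{(\beta,\mu)}$ is already granted by Proposition 5, so $\mathcal{F}^{-1}(\psi)$ is well defined and holomorphic on $H_{\beta}$ by the first step.

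The calculations are routine, and the only two points that genuinely demand care are the bookkeeping of the $(2\pi)^{1/2}$ normalisation in the convolution product so that the constants in (\ref{prod_fourier}) match exactly, and the verification that every interchange of integrations, and of the derivative with the integral, is dominated by the integrable majorant furnished by the factor $e^{-(\beta-|y|)|m|}$. It is precisely here that the strict inequality $|y| < \beta$ defining $H_{\beta}$ is indispensable, since the estimates degenerate as $z$ approaches the boundary of the strip.
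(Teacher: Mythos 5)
Your proof is correct, and for the holomorphic extension it is exactly the paper's argument: the majorant $|f(m)e^{izm}|\leq \|f\|_{(\beta,\mu)}(1+|m|)^{-\mu}e^{-(\beta-|\mathrm{Im}(z)|)|m|}$, locally uniform on $H_{\beta}$, followed by the standard holomorphy-of-parameter-integrals argument. Where you genuinely diverge is that the paper stops there and simply cites Rudin for (\ref{dz_fourier}) and (\ref{prod_fourier}), whereas you supply the proofs: differentiation under the integral sign dominated by the same exponential majorant (and indeed $\phi\in E_{(\beta,\mu-1)}$ suffices, since the factor $e^{-(\beta-|\mathrm{Im}(z)|)|m|}$ gives integrability without needing $\mu-1>1$), and Fubini plus the substitution $m=m_{1}+m_{2}$ for the product formula. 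This self-contained route costs only a few lines and has the merit of exposing the one delicate point you rightly flag: with the paper's unnormalized convolution $f\ast g(m)=\int f(m-m_{1})g(m_{1})\,dm_{1}$ and the symmetric $(2\pi)^{-1/2}$ in $\mathcal{F}^{-1}$, the computation actually yields $\mathcal{F}^{-1}(f)\,\mathcal{F}^{-1}(g)=(2\pi)^{-1/2}\mathcal{F}^{-1}(f\ast g)$, so (\ref{prod_fourier}) as displayed is off by that constant. This is harmless for the rest of the paper, since the factor $(2\pi)^{-1/2}$ is carried explicitly in front of every convolution term in (\ref{SCP}) and its descendants, but your bookkeeping remark is the correct resolution of it rather than a mere formality.
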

\begin{proof} Let $f \in E_{(\beta,\mu)}$. It is straight to check that $\mathcal{F}^{-1}(f)$ is well defined on the real line.
The fact that $\mathcal{F}^{-1}(f)$ extends to an analytic function on the strip $H_{\beta}$ follows from the
next inequality. There exists $C>0$ such that
$$ |f(m)||\exp(izm)| \leq \frac{C}{ (1 + |m|)^{\mu} } \exp( (\beta'-\beta) |m| ) $$
for all $m \in \mathbb{R}$, $z \in H_{\beta'}$, with $\beta' < \beta$. The relations (\ref{dz_fourier}),
(\ref{prod_fourier}) are classical and can be found for instance in \cite{ru}.  
\end{proof}

\section{Formal and analytic solutions of convolution initial value problems with complex parameters}

Let $k \geq 1$ and $D \geq 2$ be integers. For $1 \leq l \leq D$, let
$d_{l},\delta_{l},\Delta_{l} \geq 0$ be nonnegative integers.
We assume that 
\begin{equation}
1 = \delta_{1} \ \ , \ \ \delta_{l} < \delta_{l+1},
\end{equation}
for all $1 \leq l \leq D-1$. We make also the assumption that
\begin{equation}
d_{D} = (\delta_{D}-1)(k+1) \ \ , \ \ d_{l} > (\delta_{l}-1)(k+1) \ \ , \ \ \Delta_{D} = d_{D} - \delta_{D} + 1 \label{assum_dl_delta_l_Delta_l}
\end{equation}
for all $1 \leq l \leq D-1$. Let $Q(X),Q_{1}(X),Q_{2}(X),R_{l}(X) \in \mathbb{C}[X]$, $0 \leq l \leq D$, be polynomials such that
\begin{multline}
\mathrm{deg}(Q) \geq \mathrm{deg}(R_{D}) \geq \mathrm{deg}(R_{l}) \ \ , \ \ \mathrm{deg}(R_{D}) \geq \mathrm{deg}(Q_{1}) \ \ , \ \
\mathrm{deg}(R_{D}) \geq \mathrm{deg}(Q_{2}), \\
Q(im) \neq 0 \ \ , \ \ R_{D}(im) \neq 0
\end{multline}
for all $m \in \mathbb{R}$, all $0 \leq l \leq D-1$. We consider sequences of functions
$m \mapsto C_{0,n}(m,\epsilon)$, for all $n \geq 0$ and $m \mapsto F_{n}(m,\epsilon)$, for all $n \geq 1$,
that belong to the Banach space $E_{(\beta,\mu)}$ for some $\beta > 0$ and
$\mu > \max( \mathrm{deg}(Q_{1})+1, \mathrm{deg}(Q_{2})+1)$ and which
depend holomorphically on $\epsilon \in D(0,\epsilon_{0})$. We assume that there exist constants $K_{0},T_{0}>0$
such that
\begin{equation}
||C_{0,n}(m,\epsilon)|| _{(\beta,\mu)} \leq K_{0} (\frac{1}{T_{0}})^{n} \ \ , \ \
||F_{n}(m,\epsilon)||_{(\beta,\mu)} \leq K_{0} (\frac{1}{T_{0}})^{n} \label{norm_beta_mu_F_n}
\end{equation}
for all $n \geq 1$, for all $\epsilon \in D(0,\epsilon_{0})$. We define
$$ C_{0}(T,m,\epsilon) = \sum_{n \geq 1} C_{0,n}(m,\epsilon) T^{n} \ \ , \ \
F(T,m,\epsilon) = \sum_{n \geq 1} F_{n}(m,\epsilon) T^{n} $$
which are convergent series on $D(0,T_{0}/2)$ with values in $E_{(\beta,\mu)}$. We consider the following singular initial value problem
\begin{multline}
Q(im)(\partial_{T}U(T,m,\epsilon) ) =
\epsilon^{-1}\frac{1}{(2\pi)^{1/2}}\int_{-\infty}^{+\infty}Q_{1}(i(m-m_{1}))U(T,m-m_{1},\epsilon)\\
\times Q_{2}(im_{1})U(T,m_{1},\epsilon) dm_{1}
+ \sum_{l=1}^{D} R_{l}(im) \epsilon^{\Delta_{l} - d_{l} + \delta_{l} - 1} T^{d_{l}} \partial_{T}^{\delta_l}U(T,m,\epsilon)\\
+ \epsilon^{-1}\frac{1}{(2\pi)^{1/2}}\int_{-\infty}^{+\infty}C_{0}(T,m-m_{1},\epsilon)R_{0}(im_{1})U(T,m_{1},\epsilon) dm_{1}\\
+  \epsilon^{-1}\frac{1}{(2\pi)^{1/2}}\int_{-\infty}^{+\infty}C_{0,0}(m-m_{1},\epsilon)R_{0}(im_{1})U(T,m_{1},\epsilon) dm_{1}
+ \epsilon^{-1}F(T,m,\epsilon)
\label{SCP}
\end{multline}
for given initial data $U(0,m,\epsilon) = 0$.\medskip

\begin{prop} There exists a unique formal series
$$ \hat{U}(T,m,\epsilon) = \sum_{n \geq 1} U_{n}(m,\epsilon) T^{n} $$
solution of (\ref{SCP}) with initial data $U(0,m,\epsilon) \equiv 0$, where the coefficients
$m \mapsto U_{n}(m,\epsilon)$ belong to $E_{(\beta,\mu)}$ for $\beta>0$ and
$\mu>\max( \mathrm{deg}(Q_{1}) + 1, \mathrm{deg}(Q_{2})+1)$ given above and depend
holomorphically on $\epsilon$ in $D(0,\epsilon_{0}) \setminus \{ 0 \}$. 
\end{prop}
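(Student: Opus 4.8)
The plan is to determine the coefficients $U_{n}(m,\epsilon)$ one after another by inserting the Ansatz $\hat{U}(T,m,\epsilon)=\sum_{n\geq 1}U_{n}(m,\epsilon)T^{n}$ into (\ref{SCP}) and identifying equal powers of $T$. Since $\partial_{T}\hat{U}=\sum_{n\geq 0}(n+1)U_{n+1}T^{n}$, the quadratic convolution and the $C_{0}\hat{U}$ convolution contribute only from order $T^{2}$ on, while the forcing term $F$ and the $C_{0,0}\hat{U}$ convolution contribute from order $T^{1}$ on, and the coefficient of $T^{n}$ in $T^{d_{l}}\partial_{T}^{\delta_{l}}\hat{U}$ is $\frac{(n-d_{l}+\delta_{l})!}{(n-d_{l})!}U_{n-d_{l}+\delta_{l}}$. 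Matching the coefficient of $T^{n}$ for $n\geq 0$ thus yields the recursion
\begin{multline*}
Q(im)(n+1)U_{n+1}(m,\epsilon) = \frac{\epsilon^{-1}}{(2\pi)^{1/2}}\sum_{n_1+n_2=n}\int_{-\infty}^{+\infty}Q_{1}(i(m-m_{1}))U_{n_1}(m-m_{1},\epsilon)Q_{2}(im_{1})U_{n_2}(m_{1},\epsilon)\,dm_{1}\\
+\sum_{l=1}^{D}R_{l}(im)\,\epsilon^{\Delta_{l}-d_{l}+\delta_{l}-1}\,\frac{(n-d_{l}+\delta_{l})!}{(n-d_{l})!}\,U_{n-d_{l}+\delta_{l}}(m,\epsilon)\\
+\frac{\epsilon^{-1}}{(2\pi)^{1/2}}\sum_{n_1+n_2=n}\int_{-\infty}^{+\infty}C_{0,n_1}(m-m_{1},\epsilon)R_{0}(im_{1})U_{n_2}(m_{1},\epsilon)\,dm_{1}\\
+\frac{\epsilon^{-1}}{(2\pi)^{1/2}}\int_{-\infty}^{+\infty}C_{0,0}(m-m_{1},\epsilon)R_{0}(im_{1})U_{n}(m_{1},\epsilon)\,dm_{1}+\epsilon^{-1}F_{n}(m,\epsilon),
\end{multline*}
with the convention that summands carrying a negative or inadmissible index are discarded.

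First I would observe that this recursion is triangular. By assumption (\ref{assum_dl_delta_l_Delta_l}), together with $k\geq 1$ and $\delta_{D}\geq 2$ (which follows from $1=\delta_{1}<\delta_{2}<\cdots<\delta_{D}$ and $D\geq 2$), one has $d_{l}\geq\delta_{l}$ for every $1\leq l\leq D$, hence $n-d_{l}+\delta_{l}\leq n<n+1$; likewise the two convolution sums involve only indices $n_{2}\leq n-1$ and the $C_{0,0}$ term only $U_{n}$. Thus the whole right-hand side depends solely on $U_{1},\ldots,U_{n}$. Since $Q(im)\neq 0$ for all $m\in\mathbb{R}$, division by $Q(im)(n+1)$ determines $U_{n+1}$ uniquely from the previously constructed coefficients; the $T^{0}$-identity at the bottom reads $Q(im)U_{1}=0$, so $U_{1}\equiv 0$ and all higher coefficients are then forced. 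This simultaneously gives existence and uniqueness of the formal solution.

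It then remains to check, by induction on $n$, that each $U_{n}(\cdot,\epsilon)$ lies in $E_{(\beta,\mu)}$ and depends holomorphically on $\epsilon$. For the membership, the nonlinear convolution divided by $Q(im)$ is exactly the product $U_{n_1}\star U_{n_2}$ of Proposition 5 with $R=Q$, which is licit since $\mathrm{deg}(Q)\geq\mathrm{deg}(Q_{1}),\mathrm{deg}(Q_{2})$, $Q(im)\neq 0$ and $\mu>\max(\mathrm{deg}(Q_{1})+1,\mathrm{deg}(Q_{2})+1)$; the $C_{0,n_1}\hat{U}$ and $C_{0,0}\hat{U}$ terms are controlled by the same bilinear estimate taken with $Q_{1}\equiv 1$ and $Q_{2}=R_{0}$ (using $\mathrm{deg}(Q)\geq\mathrm{deg}(R_{D})\geq\mathrm{deg}(R_{0})$), and multiplication of $U_{n-d_{l}+\delta_{l}}$ by $R_{l}(im)/Q(im)$ preserves $E_{(\beta,\mu)}$ because $\mathrm{deg}(Q)\geq\mathrm{deg}(R_{l})$ makes that factor bounded, while $F_{n}/Q(im)\in E_{(\beta,\mu)}$ by (\ref{norm_beta_mu_F_n}). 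For the holomorphy, each $U_{n}$ is a finite combination of the data $C_{0,j}(m,\epsilon)$ and $F_{j}(m,\epsilon)$ — holomorphic in $\epsilon$ on $D(0,\epsilon_{0})$ as $E_{(\beta,\mu)}$-valued maps — multiplied by the factors $\epsilon^{-1}$ and $\epsilon^{\Delta_{l}-d_{l}+\delta_{l}-1}$, and it is precisely these negative powers that confine holomorphy to the punctured disc $D(0,\epsilon_{0})\setminus\{0\}$. The main obstacle, and the only place where the shape of the equation is genuinely used, is the triangularity step: one must ensure that the irregular operators $T^{d_{l}}\partial_{T}^{\delta_{l}}$ never raise the $T$-order, i.e. that $d_{l}\geq\delta_{l}$, which is guaranteed by (\ref{assum_dl_delta_l_Delta_l}); once this is in place, the rest is a routine propagation of the $E_{(\beta,\mu)}$ bounds through Proposition 5.
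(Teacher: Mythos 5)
Your proposal is correct and takes essentially the same route as the paper: the paper's own proof consists precisely of the recursion you derive (with $\Pi_{j=0}^{\delta_l-1}(n+\delta_l-d_l-j)=\frac{(n+\delta_l-d_l)!}{(n-d_l)!}$) together with an appeal to Proposition 5 for membership in $E_{(\beta,\mu)}$. You merely make explicit what the paper leaves implicit — the triangularity coming from $d_l\geq\delta_l$, the vanishing of $U_1$, and the origin of the punctured-disc holomorphy in the factors $\epsilon^{-1}$ and $\epsilon^{\Delta_l-d_l+\delta_l-1}$.
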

\begin{proof} From Proposition 5 and the conditions in the statement above, we get that the coefficients
$U_{n}(m,\epsilon)$ of $\hat{U}(T,m,\epsilon)$ are well defined, belong to $E_{(\beta,\mu)}$ for all
$\epsilon \in D(0,\epsilon_{0}) \setminus \{ 0 \}$, all $n \geq 1$ and satisfy the following recursion relation
\begin{multline}
(n+1)U_{n+1}(m,\epsilon)\\
= \frac{\epsilon^{-1}}{Q(im)}
\sum_{n_{1}+n_{2}=n,n_{1} \geq 1,n_{2} \geq 1}
\frac{1}{(2\pi)^{1/2}} \int_{-\infty}^{+\infty} Q_{1}(i(m-m_{1}))U_{n_1}(m-m_{1},\epsilon)
Q_{2}(im_{1})U_{n_2}(m_{1},\epsilon) dm_{1}\\
+ \sum_{l=1}^{D} \frac{R_{l}(im)}{Q(im)}\left( \epsilon^{\Delta_{l} - d_{l} + \delta_{l} - 1}
\Pi_{j=0}^{\delta_{l}-1} (n+\delta_{l}-d_{l}-j) \right)
U_{n+\delta_{l}-d_{l}}(m,\epsilon)\\
+ \frac{\epsilon^{-1}}{Q(im)}
\sum_{n_{1}+n_{2}=n,n_{1} \geq 1,n_{2} \geq 1} \frac{1}{(2\pi)^{1/2}} \int_{-\infty}^{+\infty}
C_{0,n_{1}}(m-m_{1},\epsilon)R_{0}(im_{1})U_{n_2}(m_{1},\epsilon) dm_{1}\\
+ \frac{\epsilon^{-1}}{(2\pi)^{1/2}Q(im)} \int_{-\infty}^{+\infty}
C_{0,0}(m-m_{1},\epsilon)R_{0}(im_{1})U_{n}(m_{1},\epsilon) dm_{1}
+ \frac{\epsilon^{-1}}{Q(im)}F_{n}(m,\epsilon)
\end{multline}
for all $n \geq \max_{1 \leq l \leq D}d_{l}$.
\end{proof}

Using the formula from \cite{taya}, p. 40, we can expand the operators
$T^{\delta_{l}(k+1)} \partial_{T}^{\delta_l}$ in the form
\begin{equation}
T^{\delta_{l}(k+1)} \partial_{T}^{\delta_l} = (T^{k+1}\partial_{T})^{\delta_l} +
\sum_{1 \leq p \leq \delta_{l}-1} A_{\delta_{l},p} T^{k(\delta_{l}-p)} (T^{k+1}\partial_{T})^{p} \label{expand_op_diff}
\end{equation}
where $A_{\delta_{l},p}$, $p=1,\ldots,\delta_{l}-1$ are real numbers.
We define integers $d_{l,k} \geq 0$ to satisfy
\begin{equation}
d_{l} + k + 1 = \delta_{l}(k+1) + d_{l,k}
\end{equation}
for all $1 \leq l \leq D$. Multiplying the equation (\ref{SCP}) by $T^{k+1}$ and using
(\ref{expand_op_diff}), we can rewrite the equation (\ref{SCP}) in the form
\begin{multline}
Q(im)( T^{k+1}\partial_{T}U(T,m,\epsilon) ) \\
= \epsilon^{-1}T^{k+1}
\frac{1}{(2\pi)^{1/2}}\int_{-\infty}^{+\infty} Q_{1}(i(m-m_{1}))U(T,m-m_{1},\epsilon) Q_{2}(im_{1})U(T,m_{1},\epsilon) dm_{1} \\
+ \sum_{l=1}^{D} R_{l}(im)\left( \epsilon^{\Delta_{l} - d_{l} + \delta_{l} - 1}T^{d_{l,k}}(T^{k+1}\partial_{T})^{\delta_l}
U(T,m,\epsilon) \right.
\\+ \sum_{1 \leq p \leq \delta_{l}-1} A_{\delta_{l},p}
\left. \epsilon^{\Delta_{l}-d_{l}+\delta_{l}-1} T^{k(\delta_{l}-p) + d_{l,k}}(T^{k+1}\partial_{T})^{p}U(T,m,\epsilon) \right)\\
+ \epsilon^{-1}T^{k+1}
\frac{1}{(2\pi)^{1/2}}\int_{-\infty}^{+\infty} C_{0}(T,m-m_{1},\epsilon) R_{0}(im_{1})U(T,m_{1},\epsilon) dm_{1}\\
+ \epsilon^{-1}T^{k+1}
\frac{1}{(2\pi)^{1/2}}\int_{-\infty}^{+\infty} C_{0,0}(m-m_{1},\epsilon) R_{0}(im_{1})U(T,m_{1},\epsilon) dm_{1}
+ \epsilon^{-1}T^{k+1}F(T,m,\epsilon)
\label{SCP_irregular}
\end{multline}

We denote
$\omega_{k}(\tau,m,\epsilon)$ the formal $m_{k}-$Borel transform of
$\hat{U}(T,m,\epsilon)$ with respect to $T$, $\varphi_{k}(\tau,m,\epsilon)$ the formal $m_{k}-$Borel transform of
$C_{0}(T,m,\epsilon)$ with respect to $T$ and $\psi_{k}(\tau,m,\epsilon)$ the formal $m_{k}-$Borel transform of
$F(T,m,\epsilon)$ with respect to $T$,
\begin{multline*}
 \omega_{k}(\tau,m,\epsilon) = \sum_{n \geq 1} U_{n}(m,\epsilon) \frac{\tau^n}{\Gamma(\frac{n}{k})} \ \ , \ \
\varphi_{k}(\tau,m,\epsilon) = \sum_{n \geq 1} C_{0,n}(m,\epsilon) \frac{\tau^n}{\Gamma(\frac{n}{k})}\\
\psi_{k}(\tau,m,\epsilon) = \sum_{n \geq 1} F_{n}(m,\epsilon) \frac{\tau^n}{\Gamma(\frac{n}{k})}
\end{multline*}
Using (\ref{norm_beta_mu_F_n}) we get that
$\varphi_{k}(\tau,m,\epsilon) \in F_{(\nu,\beta,\mu,k,\epsilon)}^{d}$ and
$\psi_{k}(\tau,m,\epsilon) \in F_{(\nu,\beta,\mu,k,\epsilon)}^{d}$, for
all $\epsilon \in D(0,\epsilon_{0}) \setminus \{ 0 \}$, any unbounded sector $S_{d}$ centered at 0 and bisecting direction
$d \in \mathbb{R}$, for some $\nu>0$. Indeed, we have that
\begin{multline}
||\varphi_{k}(\tau,m,\epsilon)||_{(\nu,\beta,\mu,k,\epsilon)} \leq \sum_{n \geq 1}
||C_{0,n}(m,\epsilon)||_{(\beta,\mu)} (\sup_{\tau \in \bar{D}(0,\rho) \cup S_{d}}
\frac{1 + |\frac{\tau}{\epsilon}|^{2k}}{|\frac{\tau}{\epsilon}|} \exp(-\nu |\frac{\tau}{\epsilon}|^{k})
\frac{|\tau|^n}{\Gamma(\frac{n}{k})}),\\
||\psi_{k}(\tau,m,\epsilon)||_{(\nu,\beta,\mu,k,\epsilon)} \leq \sum_{n \geq 1}
||F_{n}(m,\epsilon)||_{(\beta,\mu)} (\sup_{\tau \in \bar{D}(0,\rho) \cup S_{d}}
\frac{1 + |\frac{\tau}{\epsilon}|^{2k}}{|\frac{\tau}{\epsilon}|} \exp(-\nu |\frac{\tau}{\epsilon}|^{k})
\frac{|\tau|^n}{\Gamma(\frac{n}{k})}) \label{maj_norm_psi_k_1}
\end{multline}
By using the classical estimates (\ref{x_m_exp_x<}) and Stirling formula
$\Gamma(n/k) \sim (2\pi)^{1/2}(n/k)^{\frac{n}{k}-\frac{1}{2}}e^{-n/k}$ as $n$ tends to $+\infty$, we get two constants
$A_{1},A_{2}>0$ depending on $\nu,k$ such that
\begin{multline}
\sup_{\tau \in \bar{D}(0,\rho) \cup S_{d}}
\frac{1 + |\frac{\tau}{\epsilon}|^{2k}}{|\frac{\tau}{\epsilon}|} \exp(-\nu |\frac{\tau}{\epsilon}|^{k})
\frac{|\tau|^n}{\Gamma(\frac{n}{k})} = \sup_{\tau \in \bar{D}(0,\rho) \cup S_{d}}
|\epsilon|^{n} (1 + |\frac{\tau}{\epsilon}|^{2k})
|\frac{\tau}{\epsilon}|^{n-1} \frac{\exp( -\nu |\frac{\tau}{\epsilon}|^{k} )}{\Gamma(\frac{n}{k})}\\
\leq \epsilon_{0}^{n} \sup_{x \geq 0} (1+x^{2})x^{\frac{n-1}{k}}
\frac{e^{-\nu x}}{\Gamma(\frac{n}{k})} \leq \epsilon_{0}^{n}\left( (\frac{n-1}{\nu k})^{\frac{n-1}{k}}e^{-\frac{n-1}{k}} +
( \frac{n-1}{\nu k} + \frac{2}{\nu})^{\frac{n-1}{k}+2} e^{-(\frac{n-1}{k}+2)} \right)/ \Gamma(n/k)\\
\leq A_{1}\epsilon_{0}^{n}(A_{2})^{n} \label{sup_Stirling}
\end{multline}
for all $n \geq 0$, all $\epsilon \in D(0,\epsilon_{0}) \setminus \{ 0 \}$. Therefore, if $\epsilon_{0}$ fulfills
$\epsilon_{0}A_{2} < T_{0}$, we get the estimates
\begin{multline}
||\varphi_{k}(\tau,m,\epsilon)||_{(\nu,\beta,\mu,k,\epsilon)} \leq A_{1} \sum_{n \geq 1} ||C_{0,n}(m,\epsilon)||_{(\beta,\mu)}
(\epsilon_{0}A_{2})^{n} \leq \frac{A_{1}A_{2}K_{0}}{T_0} \frac{\epsilon_{0}}{ 1 - \frac{A_{2}}{T_0}\epsilon_{0} },\\
||\psi_{k}(\tau,m,\epsilon)||_{(\nu,\beta,\mu,k,\epsilon)} \leq A_{1} \sum_{n \geq 1} ||F_{n}(m,\epsilon)||_{(\beta,\mu)}
(\epsilon_{0}A_{2})^{n} \leq \frac{A_{1}A_{2}K_{0}}{T_0} \frac{\epsilon_{0}}{ 1 - \frac{A_{2}}{T_0}\epsilon_{0} }
\label{norm_F_varphi_k_psi_k_epsilon_0}
\end{multline}
for all $\epsilon \in D(0,\epsilon_{0}) \setminus \{ 0 \}$.

Using the computation rules for the formal $m_{k}-$Borel transform in Proposition 6, we deduce the following equation satisfied by
$\omega_{k}(\tau,m,\epsilon)$,
\begin{multline}
Q(im)( k \tau^{k} \omega_{k}(\tau,m,\epsilon) )
= \epsilon^{-1}
\frac{\tau^k}{\Gamma(1 + \frac{1}{k})} \int_{0}^{\tau^k}
(\tau^{k}-s)^{1/k}\\
\times \left( \frac{1}{(2\pi)^{1/2}} s\int_{0}^{s} \int_{-\infty}^{+\infty} \right.
Q_{1}(i(m-m_{1}))\omega_{k}((s-x)^{1/k},m-m_{1},\epsilon)\\
\left. \times  Q_{2}(im_{1})
\omega_{k}(x^{1/k},m_{1},\epsilon) \frac{1}{(s-x)x} dxdm_{1} \right) \frac{ds}{s}\\
+ R_{D}(im) \left( k^{\delta_D} \tau^{\delta_{D}k} \omega_{k}(\tau,m,\epsilon) \right. \\
+ \sum_{1 \leq p \leq \delta_{D}-1} A_{\delta_{D},p}
\frac{\tau^{k}}{\Gamma(\delta_{D}-p)}\int_{0}^{\tau^k} (\tau^{k}-s)^{\delta_{D}-p-1}
\left. (k^{p} s^{p} \omega_{k}(s^{1/k},m,\epsilon)) \frac{ds}{s} \right)\\
+ \sum_{l=1}^{D-1} R_{l}(im) \left( \epsilon^{\Delta_{l}-d_{l}+\delta_{l}-1} \frac{\tau^k}{\Gamma( \frac{d_{l,k}}{k} )} \right.
\int_{0}^{\tau^k} (\tau^{k}-s)^{\frac{d_{l,k}}{k}-1}(k^{\delta_l}s^{\delta_l}\omega_{k}(s^{1/k},m,\epsilon)) \frac{ds}{s}\\
+ \sum_{1 \leq p \leq \delta_{l}-1} A_{\delta_{l},p}\epsilon^{\Delta_{l}-d_{l}+\delta_{l}-1}
\frac{\tau^k}{\Gamma( \frac{d_{l,k}}{k} + \delta_{l}-p)} \int_{0}^{\tau^k}
\left. (\tau^{k}-s)^{\frac{d_{l,k}}{k}+\delta_{l}-p-1}(k^{p}s^{p}\omega_{k}(s^{1/k},m,\epsilon)) \frac{ds}{s} \right)\\
 + \epsilon^{-1}
\frac{\tau^k}{\Gamma(1 + \frac{1}{k})} \int_{0}^{\tau^k}
(\tau^{k}-s)^{1/k}\\
\times \left( \frac{1}{(2\pi)^{1/2}} s\int_{0}^{s} \int_{-\infty}^{+\infty} \right.
\left. \varphi_{k}((s-x)^{1/k},m-m_{1},\epsilon) R_{0}(im_{1}) \omega_{k}(x^{1/k},m_{1},\epsilon) \frac{1}{(s-x)x}
dxdm_{1} \right) \frac{ds}{s}\\
+ \epsilon^{-1}\frac{\tau^k}{\Gamma(1 + \frac{1}{k})} \int_{0}^{\tau^k}
(\tau^{k}-s)^{1/k} \frac{1}{(2\pi)^{1/2}} ( \int_{-\infty}^{+\infty} C_{0,0}(m-m_{1},\epsilon)R_{0}(im_{1})
\omega_{k}(s^{1/k},m_{1},\epsilon) dm_{1} )\frac{ds}{s}\\
+ \epsilon^{-1} \frac{\tau^{k}}{\Gamma(1 + \frac{1}{k})}\int_{0}^{\tau^k}
(\tau^{k}-s)^{1/k} \psi_{k}(s^{1/k},m,\epsilon) \frac{ds}{s} \label{k_Borel_equation}
\end{multline}

We make the additional assumption that there exists an unbounded sector
$$ S_{Q,R_{D}} = \{ z \in \mathbb{C} / |z| \geq r_{Q,R_{D}} \ \ , \ \ |\mathrm{arg}(z) - d_{Q,R_{D}}| \leq \eta_{Q,R_{D}} \} $$
with direction $d_{Q,R_{D}} \in \mathbb{R}$, aperture $\eta_{Q,R_{D}}>0$ for some radius $r_{Q,R_{D}}>0$ such that
\begin{equation}
\frac{Q(im)}{R_{D}(im)} \in S_{Q,R_{D}} \label{quotient_Q_RD_in_S}
\end{equation} 
for all $m \in \mathbb{R}$. We factorize the polynomial $P_{m}(\tau) = Q(im)k - R_{D}(im)k^{\delta_D}
\tau^{(\delta_{D}-1)k}$ in the form
\begin{equation}
 P_{m}(\tau) = -R_{D}(im)k^{\delta_D}\Pi_{l=0}^{(\delta_{D}-1)k-1} (\tau - q_{l}(m)) \label{factor_P_m}
\end{equation}
where
\begin{equation}
q_{l}(m) = (\frac{|Q(im)|}{|R_{D}(im)|k^{\delta_{D}-1}})^{\frac{1}{(\delta_{D}-1)k}}
\exp( \sqrt{-1}( \mathrm{arg}( \frac{Q(im)}{R_{D}(im)k^{\delta_{D}-1}}) \frac{1}{(\delta_{D}-1)k} +
\frac{2\pi l}{(\delta_{D}-1)k} ) ) \label{defin_roots}
\end{equation}
for all $0 \leq l \leq (\delta_{D}-1)k-1$, all $m \in \mathbb{R}$.

We choose an unbounded sector $S_{d}$ centered at 0, a small closed disc $\bar{D}(0,\rho)$ and we prescribe the sector
$S_{Q,R_{D}}$ in such a way that the following conditions hold.\medskip

\noindent 1) There exists a constant $M_{1}>0$ such that
\begin{equation}
|\tau - q_{l}(m)| \geq M_{1}(1 + |\tau|) \label{root_cond_1}
\end{equation}
for all $0 \leq l \leq (\delta_{D}-1)k-1$, all $m \in \mathbb{R}$, all $\tau \in S_{d} \cup \bar{D}(0,\rho)$. Indeed,
from (\ref{quotient_Q_RD_in_S}) and the explicit expression (\ref{defin_roots}) of $q_{l}(m)$, we first observe that
$|q_{l}(m)| > 2\rho$ for every $m \in \mathbb{R}$, all $0 \leq l \leq (\delta_{D}-1)k-1$ for an appropriate choice of $r_{Q,R_{D}}$
and of $\rho>0$. We also see that for all $m \in \mathbb{R}$, all $0 \leq l \leq (\delta_{D}-1)k-1$, the roots
$q_{l}(m)$ remain in a union $\mathcal{U}$ of unbounded sectors centered at 0 that do not cover a full neighborhood of
the origin in $\mathbb{C}^{\ast}$ provided that $\eta_{Q,R_{D}}$ is small enough. Therefore, one can choose an adequate
sector $S_{d}$ such that $S_{d} \cap \mathcal{U} = \emptyset$ with the property that for all
$0 \leq l \leq (\delta_{D}-1)k-1$ the quotients $q_{l}(m)/\tau$ lay outside
some small disc centered at 1 in $\mathbb{C}$ for all $\tau \in S_{d}$, all $m \in \mathbb{R}$. This yields (\ref{root_cond_1})
for some small constant $M_{1}>0$.\medskip

\noindent 2) There exists a constant $M_{2}>0$ such that
\begin{equation}
|\tau - q_{l_0}(m)| \geq M_{2}|q_{l_0}(m)| \label{root_cond_2}
\end{equation}
for some $l_{0} \in \{0,\ldots,(\delta_{D}-1)k-1 \}$, all $m \in \mathbb{R}$, all $\tau \in S_{d} \cup \bar{D}(0,\rho)$. Indeed, for the
sector $S_{d}$ and the disc $\bar{D}(0,\rho)$ chosen as above in 1), we notice that for any fixed
$0 \leq l_{0} \leq (\delta_{D}-1)k-1$, the quotient $\tau/q_{l_0}(m)$ stays outside a small disc centered at 1 in $\mathbb{C}$
for all $\tau \in S_{d} \cup \bar{D}(0,\rho)$, all $m \in \mathbb{R}$. Hence (\ref{root_cond_2}) must hold for some small
constant $M_{2}>0$.\medskip

By construction
of the roots (\ref{defin_roots}) in the factorization (\ref{factor_P_m}) and using the lower bound estimates
(\ref{root_cond_1}), (\ref{root_cond_2}), we get a constant $C_{P}>0$ such that
\begin{multline}
|P_{m}(\tau)| \geq M_{1}^{(\delta_{D}-1)k-1}M_{2}|R_{D}(im)k^{\delta_D}
|(\frac{|Q(im)|}{|R_{D}(im)|k^{\delta_{D}-1}})^{\frac{1}{(\delta_{D}-1)k}} (1+|\tau|)^{(\delta_{D}-1)k-1}\\
\geq M_{1}^{(\delta_{D}-1)k-1}M_{2}\frac{k^{\delta_D}}{(k^{\delta_{D}-1})^{\frac{1}{(\delta_{D}-1)k}}}
(r_{Q,R_{D}})^{\frac{1}{(\delta_{D}-1)k}} |R_{D}(im)| \\
\times (\min_{x \geq 0}
\frac{(1+x)^{(\delta_{D}-1)k-1}}{(1+x^{k})^{(\delta_{D}-1) - \frac{1}{k}}}) (1 + |\tau|^{k})^{(\delta_{D}-1)-\frac{1}{k}}\\
= C_{P} (r_{Q,R_{D}})^{\frac{1}{(\delta_{D}-1)k}} |R_{D}(im)| (1+|\tau|^{k})^{(\delta_{D}-1)-\frac{1}{k}} \label{low_bounds_P_m}
\end{multline}
for all $\tau \in S_{d} \cup \bar{D}(0,\rho)$, all $m \in \mathbb{R}$.

In the next proposition, we give sufficient conditions under which the equation (\ref{k_Borel_equation}) has a solution
$\omega_{k}(\tau,m,\epsilon)$ in the Banach space $F_{(\nu,\beta,\mu,k,\epsilon)}^{d}$ where $\beta,\mu$ are defined above.

\begin{prop} Under the assumption that
\begin{equation}
\delta_{D} \geq \delta_{l} + \frac{2}{k} \ \ , \ \ \Delta_{l}-d_{l}+\delta_{l} + k(\delta_{l} - \delta_{D}) + d_{l,k} \geq 0,
\label{constraints_k_Borel_equation}
\end{equation}
for all $1 \leq l \leq D-1$, there exist a radius $r_{Q,R_{D}}>0$, a constant $\varpi>0$ and constants
$\zeta_{0},\zeta_{1},\zeta_{2}>0$ (depending on $Q_{1},Q_{2},k,C_{P},\mu,\nu,\epsilon_{0},R_{l},\Delta_{l},\delta_{l},d_{l}$ for
$0 \leq l \leq D$) such that if
\begin{equation}
||C_{0,0}(m,\epsilon)||_{(\beta,\mu)} \leq \zeta_{0} \ \ , \ \
||\varphi_{k}(\tau,m,\epsilon)||_{(\nu,\beta,\mu,k,\epsilon)} \leq \zeta_{1} \ \ , \ \
||\psi_{k}(\tau,m,\epsilon)||_{(\nu,\beta,\mu,k,\epsilon)} \leq \zeta_{2} \label{norm_F_varphi_k_psi_k_small}
\end{equation}
for all $\epsilon \in D(0,\epsilon_{0}) \setminus \{ 0 \}$, the equation (\ref{k_Borel_equation}) has a unique solution
$\omega_{k}^{d}(\tau,m,\epsilon)$ in the space $F_{(\nu,\beta,\mu,k,\epsilon)}^{d}$ where $\beta,\mu>0$ are defined in
Proposition 8 which verifies $||\omega_{k}^{d}(\tau,m,\epsilon)||_{(\nu,\beta,\mu,k,\epsilon)} \leq \varpi$, for all
$\epsilon \in D(0,\epsilon_{0}) \setminus \{ 0 \}$.
\end{prop}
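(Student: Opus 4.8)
The plan is to recast the convolution equation (\ref{k_Borel_equation}) as a fixed point problem and to solve it by the contraction mapping principle inside the Banach space $F_{(\nu,\beta,\mu,k,\epsilon)}^{d}$. First I would observe that every term on both sides of (\ref{k_Borel_equation}) carries a factor $\tau^{k}$; dividing it out and moving the leading term $R_{D}(im)k^{\delta_{D}}\tau^{\delta_{D}k}\omega_{k}$ (which equals $\tau^{k}R_{D}(im)k^{\delta_{D}}\tau^{(\delta_{D}-1)k}\omega_{k}$) to the left-hand side produces the polynomial coefficient $P_{m}(\tau)=Q(im)k-R_{D}(im)k^{\delta_{D}}\tau^{(\delta_{D}-1)k}$ of (\ref{factor_P_m}) in front of $\omega_{k}$. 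Since the lower bound (\ref{low_bounds_P_m}) shows that $P_{m}(\tau)$ does not vanish on $(\bar{D}(0,\rho)\cup S_{d})\times\mathbb{R}$, I can divide through by $P_{m}(\tau)$ and rewrite (\ref{k_Borel_equation}) equivalently as $\omega_{k}=\mathcal{H}_{\epsilon}(\omega_{k})$, where $\mathcal{H}_{\epsilon}$ is affine plus quadratic: a constant term $c_{\epsilon}$ coming from the forcing $\psi_{k}$, a linear part $L_{\epsilon}$ gathering the lower-order differential terms (the $R_{D}$ sum and the $R_{l}$ terms for $1\le l\le D-1$) together with the two convolution terms carrying $C_{0,0}$ and $\varphi_{k}$, and a bilinear part $\mathcal{Q}_{\epsilon}$ coming from the quadratic nonlinearity.

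Then I would estimate the norm of each piece of $\mathcal{H}_{\epsilon}$ on a closed ball $\bar{B}(0,\varpi)\subset F_{(\nu,\beta,\mu,k,\epsilon)}^{d}$ by matching it to one of the continuity results of Section 2, the factor $R_{l}(im)/P_{m}(\tau)$ (bounded by a constant times $(1+|\tau|^{k})^{-((\delta_{D}-1)-1/k)}$ thanks to $\mathrm{deg}(R_{l})\le\mathrm{deg}(R_{D})$ and (\ref{low_bounds_P_m})) playing the role of the coefficient $a_{\gamma_{1},k}(\tau)$ of Proposition 2 with $\gamma_{1}=(\delta_{D}-1)-1/k$. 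More precisely, the $R_{l}$ integral terms are controlled by Proposition 2(ii) with $\chi=d_{l,k}$ and $\nu_{2}=\delta_{l}-1$, the $R_{D}$ sum by Proposition 2(i), the quadratic term and the $\varphi_{k}$ term by Proposition 3 (with $R=R_{D}$), the $C_{0,0}$ term by Proposition 4, the forcing term by Proposition 1 with $\gamma_{2}=1/k$, and every prefactor $1/P_{m}(\tau)$ by Lemma 1. The crucial bookkeeping is the total power of $|\epsilon|$ produced by each term: for the $l$-th linear term the explicit $\epsilon^{\Delta_{l}-d_{l}+\delta_{l}-1}$ combines with the $|\epsilon|^{k(1+\nu_{2}+\chi_{2}-\gamma_{1})}$ of Proposition 2 into $|\epsilon|^{\Delta_{l}-d_{l}+\delta_{l}+k(\delta_{l}-\delta_{D})+d_{l,k}}$, a nonnegative power exactly by the second inequality in (\ref{constraints_k_Borel_equation}), while the hypothesis $\nu_{2}+1/k-\gamma_{1}\le0$ of Proposition 2(ii) is precisely the first inequality $\delta_{D}\ge\delta_{l}+2/k$. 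Likewise the $\epsilon^{-1}$ in front of the quadratic, $C_{0,0}$, $\varphi_{k}$ and forcing terms is cancelled by the $|\epsilon|$ gained from Propositions 1, 3 and 4. Writing $\|\mathcal{H}_{\epsilon}(\omega)\|\le A\varpi^{2}+B\varpi+C$, the constant $A$ is uniformly bounded, $B$ is controlled by $\zeta_{0}$ (through $\|C_{0,0}\|_{(\beta,\mu)}$), by $\zeta_{1}$ (through $\|\varphi_{k}\|$) and by nonnegative powers of $\epsilon_{0}$, and $C$ by $\zeta_{2}$ (through $\|\psi_{k}\|$); choosing first $\varpi$ small, then $r_{Q,R_{D}}$ large and $\epsilon_{0},\zeta_{0},\zeta_{1},\zeta_{2}$ small under (\ref{norm_F_varphi_k_psi_k_small}), I can force $\mathcal{H}_{\epsilon}(\bar{B}(0,\varpi))\subset\bar{B}(0,\varpi)$.

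Finally I would prove the contraction estimate. Since $L_{\epsilon}$ is linear, its difference contributes $B\|\omega_{1}-\omega_{2}\|$ with the same small $B$ as above, while the bilinear part is handled by writing $\mathcal{Q}_{\epsilon}(\omega_{1})-\mathcal{Q}_{\epsilon}(\omega_{2})$ through $\omega_{1}+\omega_{2}$ and $\omega_{1}-\omega_{2}$, so that Proposition 3 yields a bound $A(\|\omega_{1}\|+\|\omega_{2}\|)\|\omega_{1}-\omega_{2}\|\le 2A\varpi\|\omega_{1}-\omega_{2}\|$. Shrinking $\varpi$ and the parameters so that $2A\varpi+B\le\tfrac12$, the map $\mathcal{H}_{\epsilon}$ becomes a $\tfrac12$-contraction of the complete metric space $\bar{B}(0,\varpi)$ into itself, and the Banach fixed point theorem furnishes the unique $\omega_{k}^{d}\in F_{(\nu,\beta,\mu,k,\epsilon)}^{d}$ with $\|\omega_{k}^{d}\|_{(\nu,\beta,\mu,k,\epsilon)}\le\varpi$ solving (\ref{k_Borel_equation}), uniformly for $\epsilon\in D(0,\epsilon_{0})\setminus\{0\}$. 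I expect the main difficulty to be exactly the $|\epsilon|$-exponent bookkeeping described above: one must check that the constraints (\ref{constraints_k_Borel_equation}) simultaneously validate the hypotheses of Propositions 1 and 2 and keep every power of $|\epsilon|$ nonnegative, so that no term of $\mathcal{H}_{\epsilon}$ blows up as $\epsilon\to0$.
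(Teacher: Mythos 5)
Your proposal is correct and follows essentially the same route as the paper: the authors also divide (\ref{k_Borel_equation}) by $\tau^{k}P_{m}(\tau)$ to define the map $\mathcal{H}_{\epsilon}$, bound each term via Lemma 1 and Propositions 1--4 together with the lower bound (\ref{low_bounds_P_m}) (with exactly the parameter choices $\gamma_{1}=(\delta_{D}-1)-\tfrac{1}{k}$, $\chi=d_{l,k}$, $\nu_{2}=\delta_{l}-1$ you indicate, so that the two inequalities of (\ref{constraints_k_Borel_equation}) validate Proposition 2(ii) and keep the net power of $|\epsilon|$ nonnegative), and conclude by the contraction mapping theorem on $\bar{B}(0,\varpi)$ after polarizing the quadratic term. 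Your $|\epsilon|$-exponent bookkeeping matches the paper's computations verbatim.
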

\begin{proof} We start the proof with a lemma which provides appropriate conditions in order to apply a fixed point theorem.
\begin{lemma} One can choose the constant $r_{Q,R_{D}}>0$, a constant $\varpi$ small enough and three constants
$\zeta_{0},\zeta_{1},\zeta_{2}>0$ (depending on $Q_{1},Q_{2},k,C_{P},\mu,\nu,\epsilon_{0},R_{l},\Delta_{l},\delta_{l},d_{l}$ for
$0 \leq l \leq D$) such that if
$$ ||C_{0,0}(m,\epsilon)||_{(\beta,\mu)} \leq \zeta_{0} \ \ , \ \
||\varphi_{k}(\tau,m,\epsilon)||_{(\nu,\beta,\mu,k,\epsilon)} \leq \zeta_{1} \ \ , \ \
||\psi_{k}(\tau,m,\epsilon)||_{(\nu,\beta,\mu,k,\epsilon)} \leq \zeta_{2}
$$
for all $\epsilon \in D(0,\epsilon_{0}) \setminus \{ 0 \}$ the map $\mathcal{H}_{\epsilon}$ defined by

\begin{multline}
\mathcal{H}_{\epsilon}(w(\tau,m)) :=
 \frac{\epsilon^{-1}}{P_{m}(\tau) \Gamma(1 + \frac{1}{k})} \int_{0}^{\tau^k}(\tau^{k}-s)^{1/k} \\
\times \left( \frac{1}{(2\pi)^{1/2}} s\int_{0}^{s} \int_{-\infty}^{+\infty} \right.
Q_{1}(i(m-m_{1}))w((s-x)^{1/k},m-m_{1}) \\
\left. \times Q_{2}(im_{1})w(x^{1/k},m_{1}) \frac{1}{(s-x)x} dxdm_{1} \right) \frac{ds}{s}\\
+ \frac{R_{D}(im)}{P_{m}(\tau)} \left\{ \sum_{1 \leq p \leq \delta_{D}-1}
\frac{A_{\delta_{D},p}}{\Gamma(\delta_{D}-p)}\int_{0}^{\tau^k} (\tau^{k}-s)^{\delta_{D}-p-1}
(k^{p} s^{p} w(s^{1/k},m)) \frac{ds}{s} \right\}\\
+ \sum_{l=1}^{D-1} \frac{R_{l}(im)}{P_{m}(\tau)}
\left\{ \frac{ \epsilon^{\Delta_{l}-d_{l}+\delta_{l}-1}}{\Gamma( \frac{d_{l,k}}{k} )}
\int_{0}^{\tau^k} (\tau^{k}-s)^{\frac{d_{l,k}}{k}-1}(k^{\delta_l}s^{\delta_l}w(s^{1/k},m)) \frac{ds}{s} \right.\\
+ \sum_{1 \leq p \leq \delta_{l}-1}
\frac{A_{\delta_{l},p}\epsilon^{\Delta_{l}-d_{l}+\delta_{l}-1}}{\Gamma( \frac{d_{l,k}}{k} + \delta_{l}-p)}
\int_{0}^{\tau^k}
\left. (\tau^{k}-s)^{\frac{d_{l,k}}{k}+\delta_{l}-p-1}(k^{p}s^{p}w(s^{1/k},m)) \frac{ds}{s} \right \}\\
+  \frac{\epsilon^{-1}}{P_{m}(\tau) \Gamma(1 + \frac{1}{k})} \int_{0}^{\tau^k}(\tau^{k}-s)^{1/k} \\
\times \left( \frac{1}{(2\pi)^{1/2}} s\int_{0}^{s} \int_{-\infty}^{+\infty}
 \varphi_{k}((s-x)^{1/k},m-m_{1},\epsilon) R_{0}(im_{1})w(x^{1/k},m_{1}) \frac{1}{(s-x)x} dxdm_{1} \right) \frac{ds}{s}\\
+ \frac{\epsilon^{-1}}{P_{m}(\tau) \Gamma(1 + \frac{1}{k})} \int_{0}^{\tau^k}(\tau^{k}-s)^{1/k}
\frac{1}{(2\pi)^{1/2}}(\int_{-\infty}^{+\infty} C_{0,0}(m-m_{1},\epsilon)R_{0}(im_{1})w(s^{1/k},m_{1}) dm_{1})\frac{ds}{s}\\
+ \frac{\epsilon^{-1}}{P_{m}(\tau)\Gamma(1 + \frac{1}{k})}\int_{0}^{\tau^k}
(\tau^{k}-s)^{1/k} \psi_{k}(s^{1/k},m,\epsilon) \frac{ds}{s}
\end{multline}
satisfy the next properties.\\
{\bf i)} The following inclusion holds
\begin{equation}
\mathcal{H}_{\epsilon}(\bar{B}(0,\varpi)) \subset \bar{B}(0,\varpi) \label{H_inclusion}
\end{equation}
where $\bar{B}(0,\varpi)$ is the closed ball of radius $\varpi>0$ centered at 0 in $F_{(\nu,\beta,\mu,k,\epsilon)}^{d}$,
for all $\epsilon \in D(0,\epsilon_{0}) \setminus \{ 0 \}$.\\
{\bf ii)} We have
\begin{equation}
|| \mathcal{H}_{\epsilon}(w_{1}) - \mathcal{H}_{\epsilon}(w_{2})||_{(\nu,\beta,\mu,k,\epsilon)}
\leq \frac{1}{2} ||w_{1} - w_{2}||_{(\nu,\beta,\mu,k,\epsilon)}
\label{H_shrink}
\end{equation}
for all $w_{1},w_{2} \in \bar{B}(0,\varpi)$, for all $\epsilon \in D(0,\epsilon_{0}) \setminus \{ 0 \}$.
\end{lemma}
\begin{proof} We first check the property (\ref{H_inclusion}). Let $\epsilon \in D(0,\epsilon_{0}) \setminus \{ 0 \}$ and
$w(\tau,m)$ be in $F_{\nu,\beta,\mu,k,\epsilon}^{d}$. We take $\zeta_{0},\zeta_{1}, \zeta_{2}, \varpi > 0$ such that
\begin{multline*}
||w(\tau,m)||_{(\nu,\beta,\mu,k,\epsilon)} \leq \varpi \ \ , \ \ ||C_{0,0}(m,\epsilon)||_{(\beta,\mu)} \leq \zeta_{0} \ \ , \ \
||\varphi_{k}(\tau,m,\epsilon)||_{(\nu,\beta,\mu,k,\epsilon)} \leq \zeta_{1},\\
||\psi_{k}(\tau,m,\epsilon)||_{(\nu,\beta,\mu,k,\epsilon)} \leq \zeta_{2},
\end{multline*}
for all $\epsilon \in D(0,\epsilon_{0}) \setminus \{ 0 \}$.

Using Lemma 1 and Proposition 3 with the lower bound estimates (\ref{low_bounds_P_m}) we get that
\begin{multline}
||\frac{\epsilon^{-1}}{P_{m}(\tau) \Gamma(1 + \frac{1}{k})} \int_{0}^{\tau^k}(\tau^{k}-s)^{1/k} \\
\times \left( \frac{1}{(2\pi)^{1/2}} s\int_{0}^{s} \int_{-\infty}^{+\infty} \right.
Q_{1}(i(m-m_{1}))w((s-x)^{1/k},m-m_{1}) \\
\left. \times Q_{2}(im_{1})w(x^{1/k},m_{1}) \frac{1}{(s-x)x} dxdm_{1} \right) \frac{ds}{s}||_{(\nu,\beta,\mu,k,\epsilon)}\\
\leq \frac{1}{\Gamma(1 + \frac{1}{k}) (2\pi)^{1/2}}
\frac{ C_{3} ||w(\tau,m)||_{(\nu,\beta,\mu,k,\epsilon)}^{2} }{ C_{P}
(r_{Q,R_{D}})^{\frac{1}{(\delta_{D}-1)k}} }\\
\leq \frac{1}{\Gamma(1 + \frac{1}{k}) (2\pi)^{1/2}}
\frac{ C_{3} \varpi^{2} }{C_{P}
(r_{Q,R_{D}})^{\frac{1}{(\delta_{D}-1)k}} } \label{fix_point_norm_estim_1}
\end{multline}
Moreover, for $0 \leq p \leq \delta_{D}-1$ and by means of Proposition 2 {\bf i)}, we deduce
\begin{multline}
||\frac{R_{D}(im)}{P_{m}(\tau)} \frac{A_{\delta_{D},p}}{\Gamma(\delta_{D}-p)}\int_{0}^{\tau^k} (\tau^{k}-s)^{\delta_{D}-p-1}
(k^{p} s^{p} w(s^{1/k},m)) \frac{ds}{s}||_{(\nu,\beta,\mu,k,\epsilon)} \\
\leq \frac{A_{\delta_{D},p}k^{p}C_{2.1}|\epsilon|}{\Gamma(\delta_{D}-p)C_{P}(r_{Q,R})^{\frac{1}{(\delta_{D}-1)k}}}
||w(\tau,m)||_{(\nu,\beta,\mu,k,\epsilon)} \\
\leq
\frac{A_{\delta_{D},p}k^{p}C_{2.1}\epsilon_{0}}{\Gamma(\delta_{D}-p)C_{P}(r_{Q,R})^{\frac{1}{(\delta_{D}-1)k}}}\varpi.
\label{fix_point_norm_estim_2}
\end{multline}

With the help of Proposition 2 {\bf ii)} and due to the assumptions of (\ref{constraints_k_Borel_equation}) we also get
that
\begin{multline}
||\frac{R_{l}(im)}{P_{m}(\tau)}\frac{ \epsilon^{\Delta_{l}-d_{l}+\delta_{l}-1}}{\Gamma( \frac{d_{l,k}}{k} )}
\int_{0}^{\tau^k} (\tau^{k}-s)^{\frac{d_{l,k}}{k}-1}(k^{\delta_l}s^{\delta_l}w(s^{1/k},m)) \frac{ds}{s}
||_{(\nu,\beta,\mu,k,\epsilon)}\\
\leq \frac{ k^{\delta_l}C_{2.2}}{\Gamma(\frac{d_{l,k}}{k}) C_{P}(r_{Q,R_{D}})^{\frac{1}{(\delta_{D}-1)k}}}
|\epsilon|^{\Delta_{l}-d_{l}+\delta_{l} + k(\delta_{l} - \delta_{D}) + d_{l,k}}\sup_{m \in \mathbb{R}} |\frac{R_{l}(im)}{R_{D}(im)}|
||w(\tau,m)||_{(\nu,\beta,\mu,k,\epsilon)}\\
\leq \frac{ k^{\delta_l}C_{2.2}}{\Gamma(\frac{d_{l,k}}{k}) C_{P}(r_{Q,R_{D}})^{\frac{1}{(\delta_{D}-1)k}}}
\epsilon_{0}^{\Delta_{l}-d_{l}+\delta_{l} + k(\delta_{l} - \delta_{D}) + d_{l,k}}
\sup_{m \in \mathbb{R}} |\frac{R_{l}(im)}{R_{D}(im)}|\varpi.
\label{fix_point_norm_estim_3}
\end{multline}
and that
\begin{multline}
|| \frac{R_{l}(im)}{P_{m}(\tau)}\frac{A_{\delta_{l},p}
\epsilon^{\Delta_{l}-d_{l}+\delta_{l}-1}}{\Gamma( \frac{d_{l,k}}{k} + \delta_{l}-p)}
\int_{0}^{\tau^k}
(\tau^{k}-s)^{\frac{d_{l,k}}{k}+\delta_{l}-p-1}(k^{p}s^{p}w(s^{1/k},m)) \frac{ds}{s}||_{(\nu,\beta,\mu,k,\epsilon)}\\
\leq \frac{|A_{\delta_{l},p}| k^{p}C_{2.2}}{\Gamma( \frac{d_{l,k}}{k} + \delta_{l}-p)
C_{P}(r_{Q,R_{D}})^{\frac{1}{(\delta_{D}-1)k}}} |\epsilon|^{\Delta_{l} - d_{l} + \delta_{l} + k(\delta_{l}-\delta_{D}) + d_{l,k}}
\sup_{m \in \mathbb{R}} |\frac{R_{l}(im)}{R_{D}(im)}|||w(\tau,m)||_{(\nu,\beta,\mu,k,\epsilon)}\\
\leq \frac{|A_{\delta_{l},p}| k^{p}C_{2.2}}{\Gamma( \frac{d_{l,k}}{k} + \delta_{l}-p)
C_{P}(r_{Q,R_{D}})^{\frac{1}{(\delta_{D}-1)k}}} \epsilon_{0}^{\Delta_{l} - d_{l} + \delta_{l} + k(\delta_{l}-\delta_{D}) + d_{l,k}}
\sup_{m \in \mathbb{R}} |\frac{R_{l}(im)}{R_{D}(im)}| \varpi. \label{fix_point_norm_estim_4}
\end{multline}

Using Lemma 1 and Proposition 3 again with the lower bound estimates (\ref{low_bounds_P_m}) we get that
\begin{multline}
||\frac{\epsilon^{-1}}{P_{m}(\tau) \Gamma(1 + \frac{1}{k})} \int_{0}^{\tau^k}(\tau^{k}-s)^{1/k} \\
\times \left( \frac{1}{(2\pi)^{1/2}} s\int_{0}^{s} \int_{-\infty}^{+\infty} \right.
 \varphi_{k}((s-x)^{1/k},m-m_{1},\epsilon) \\
\times \left. R_{0}(im_{1}) w(x^{1/k},m_{1}) \frac{1}{(s-x)x}
dxdm_{1} \right) \frac{ds}{s}||_{(\nu,\beta,\mu,k,\epsilon)}\\
\leq \frac{1}{\Gamma(1 + \frac{1}{k}) (2\pi)^{1/2}}
\frac{ C_{3} ||\varphi_{k}(\tau,m,\epsilon)||_{(\nu,\beta,\mu,k,\epsilon)}
||w(\tau,m)||_{(\nu,\beta,\mu,k,\epsilon)} }{ C_{P}
(r_{Q,R_{D}})^{\frac{1}{(\delta_{D}-1)k}} }\\
\leq \frac{1}{\Gamma(1 + \frac{1}{k}) (2\pi)^{1/2}}
\frac{ C_{3} \zeta_{1}\varpi }{ C_{P}
(r_{Q,R_{D}})^{\frac{1}{(\delta_{D}-1)k}} } \label{fix_point_norm_estim_6}
\end{multline}

Moreover, using Proposition 4, we also get
\begin{multline}
||\frac{\epsilon^{-1}}{P_{m}(\tau) \Gamma(1 + \frac{1}{k})} \int_{0}^{\tau^k}(\tau^{k}-s)^{1/k}
\frac{1}{(2\pi)^{1/2}}(\int_{-\infty}^{+\infty} C_{0,0}(m-m_{1},\epsilon)\\
\times R_{0}(im_{1})w(s^{1/k},m_{1}) dm_{1} ) \frac{ds}{s}
||_{(\nu,\beta,\mu,\epsilon)} \leq \frac{1}{\Gamma(1 + \frac{1}{k}) (2\pi)^{1/2}}
\frac{ C_{4} \zeta_{0}\varpi }{ C_{P}
(r_{Q,R_{D}})^{\frac{1}{(\delta_{D}-1)k}} } \label{fix_point_norm_estim_7}
\end{multline}

Finally, from Lemma 1 and Proposition 1, one gets
\begin{multline}
||\frac{\epsilon^{-1}}{P_{m}(\tau)\Gamma(1 + \frac{1}{k})}\int_{0}^{\tau^k}
(\tau^{k}-s)^{1/k} \psi_{k}(s^{1/k},m,\epsilon) \frac{ds}{s}||_{(\nu,\beta,\mu,k,\epsilon)}\\
\leq \frac{C_{1}}{\Gamma(1+\frac{1}{k})C_{P}(r_{Q,R_{D}})^{\frac{1}{(\delta_{D}-1)k}}\min_{m \in \mathbb{R}} |R_{D}(im)|}
||\psi_{k}(\tau,m,\epsilon)||_{(\nu,\beta,\mu,k,\epsilon)}\\
\leq \frac{C_{1}}{\Gamma(1+\frac{1}{k})C_{P}(r_{Q,R_{D}})^{\frac{1}{(\delta_{D}-1)k}}\min_{m \in \mathbb{R}} |R_{D}(im)|}
 \zeta_{2} \label{fix_point_norm_estim_8}
\end{multline}

Now, we choose $\varpi, \zeta_{0}, \zeta_{1},\zeta_{2} >0$ and $r_{Q,R_{D}}>0$ such that

\begin{multline}
 \frac{1}{\Gamma(1 + \frac{1}{k}) (2\pi)^{1/2}}
\frac{ C_{3} \varpi^{2} }{ C_{P}
(r_{Q,R_{D}})^{\frac{1}{(\delta_{D}-1)k}} }
+ \sum_{p=1}^{\delta_{D}-1}
\frac{|A_{\delta_{D},p}|k^{p}C_{2.1}\epsilon_{0}}{\Gamma(\delta_{D}-p)C_{P}(r_{Q,R})^{\frac{1}{(\delta_{D}-1)k}}}\varpi\\
+ \sum_{l=1}^{D-1} \frac{ k^{\delta_l}C_{2.2}}{\Gamma(\frac{d_{l,k}}{k}) C_{P}(r_{Q,R_{D}})^{\frac{1}{(\delta_{D}-1)k}}}
\epsilon_{0}^{\Delta_{l}-d_{l}+\delta_{l} + k(\delta_{l} - \delta_{D}) + d_{l,k}}
\sup_{m \in \mathbb{R}} |\frac{R_{l}(im)}{R_{D}(im)}|\varpi\\
+ \sum_{p=1}^{\delta_{l}-1} \frac{|A_{\delta_{l},p}| k^{p}C_{2.2}}{\Gamma( \frac{d_{l,k}}{k} + \delta_{l}-p)
C_{P}(r_{Q,R_{D}})^{\frac{1}{(\delta_{D}-1)k}}} \epsilon_{0}^{\Delta_{l} - d_{l} + \delta_{l} + k(\delta_{l}-\delta_{D}) + d_{l,k}}
\sup_{m \in \mathbb{R}} |\frac{R_{l}(im)}{R_{D}(im)}| \varpi\\
+  \frac{1}{\Gamma(1 + \frac{1}{k}) (2\pi)^{1/2}}
\frac{ (C_{3} \zeta_{1} + C_{4} \zeta_{0})\varpi }{C_{P}
(r_{Q,R_{D}})^{\frac{1}{(\delta_{D}-1)k}} }\\
+ \frac{C_{1}}{\Gamma(1+\frac{1}{k})C_{P}(r_{Q,R_{D}})^{\frac{1}{(\delta_{D}-1)k}}\min_{m \in \mathbb{R}} |R_{D}(im)|}
 \zeta_{2} \leq \varpi \label{fix_point_sum<varpi}
\end{multline}

Gathering all the norm estimates (\ref{fix_point_norm_estim_1}), (\ref{fix_point_norm_estim_2}),
(\ref{fix_point_norm_estim_3}), (\ref{fix_point_norm_estim_4}), (\ref{fix_point_norm_estim_6}),
(\ref{fix_point_norm_estim_7}),  (\ref{fix_point_norm_estim_8}) with the constraint
(\ref{fix_point_sum<varpi}), one gets (\ref{H_inclusion}).\medskip

Now, we check the second property (\ref{H_shrink}). Let $w_{1}(\tau,m),w_{2}(\tau,m)$ be in $F^{d}_{(\nu,\beta,\mu,k,\epsilon)}$.
We take $\varpi > 0$ such that
$$ ||w_{l}(\tau,m)||_{(\nu,\beta,\mu,k,\epsilon)} \leq \varpi,$$
for $l=1,2$, for all $\epsilon \in D(0,\epsilon_{0}) \setminus \{ 0 \}$. One can write
\begin{multline}
Q_{1}(i(m-m_{1}))w_{1}((s-x)^{1/k},m-m_{1})Q_{2}(im_{1})w_{1}(x^{1/k},m_{1})\\
-Q_{1}(i(m-m_{1}))w_{2}((s-x)^{1/k},m-m_{1})Q_{2}(im_{1})w_{2}(x^{1/k},m_{1})\\
= Q_{1}(i(m-m_{1}))\left(w_{1}((s-x)^{1/k},m-m_{1}) - w_{2}((s-x)^{1/k},m-m_{1})\right)Q_{2}(im_{1})w_{1}(x^{1/k},m_{1})\\
+ Q_{1}(i(m-m_{1}))w_{2}((s-x)^{1/k},m-m_{1})Q_{2}(im_{1})\left(w_{1}(x^{1/k},m_{1}) - w_{2}(x^{1/k},m_{1})\right)
\label{conv_product_w1_w2}
\end{multline}
and using Lemma 1 and Proposition 3 with the lower bound estimates (\ref{low_bounds_P_m}) we get that
\begin{multline}
||\frac{\epsilon^{-1}}{P_{m}(\tau) \Gamma(1 + \frac{1}{k})} \int_{0}^{\tau^k}(\tau^{k}-s)^{1/k} \\
\times \left( \frac{1}{(2\pi)^{1/2}} s\int_{0}^{s} \int_{-\infty}^{+\infty} \right.
(Q_{1}(i(m-m_{1}))w_{1}((s-x)^{1/k},m-m_{1}) Q_{2}(im_{1})w_{1}(x^{1/k},m_{1})\\
- Q_{1}(i(m-m_{1}))w_{2}((s-x)^{1/k},m-m_{1}) \\
\left. \times Q_{2}(im_{1})w_{2}(x^{1/k},m_{1})) \frac{1}{(s-x)x}
dxdm_{1} \right) \frac{ds}{s}||_{(\nu,\beta,\mu,k,\epsilon)}\\
\leq \frac{1}{\Gamma(1 + \frac{1}{k}) (2\pi)^{1/2}}
\frac{ C_{3} }{ C_{P}
(r_{Q,R_{D}})^{\frac{1}{(\delta_{D}-1)k}} }\\
\times ||w_{1}(\tau,m) - w_{2}(\tau,m)||_{(\nu,\beta,\mu,k,\epsilon)}(||w_{1}(\tau,m)||_{(\nu,\beta,\mu,k,\epsilon)} +
||w_{2}(\tau,m)||_{(\nu,\beta,\mu,k,\epsilon)})
\\
\leq \frac{1}{\Gamma(1 + \frac{1}{k}) (2\pi)^{1/2}}
\frac{ C_{3} 2 \varpi }{ C_{P}
(r_{Q,R_{D}})^{\frac{1}{(\delta_{D}-1)k}} }||w_{1}(\tau,m) - w_{2}(\tau,m)||_{(\nu,\beta,\mu,k,\epsilon)}
\label{fix_point_norm_estim_1_shrink}
\end{multline}

From the estimates (\ref{fix_point_norm_estim_2}),
(\ref{fix_point_norm_estim_3}), (\ref{fix_point_norm_estim_4}),
(\ref{fix_point_norm_estim_6}), (\ref{fix_point_norm_estim_7}), (\ref{fix_point_norm_estim_8}) and under the constraints
(\ref{constraints_k_Borel_equation}), we deduce that
\begin{multline}
||\frac{R_{D}(im)}{P_{m}(\tau)} \frac{A_{\delta_{D},p}}{\Gamma(\delta_{D}-p)}\int_{0}^{\tau^k} (\tau^{k}-s)^{\delta_{D}-p-1}
(k^{p} s^{p} (w_{1}(s^{1/k},m) - w_{2}(s^{1/k},m))) \frac{ds}{s}||_{(\nu,\beta,\mu,k,\epsilon)} \\
\leq \frac{|A_{\delta_{D},p}|k^{p}C_{2.1}|\epsilon|}{\Gamma(\delta_{D}-p)C_{P}(r_{Q,R_{D}})^{\frac{1}{(\delta_{D}-1)k}}}
||w_{1}(\tau,m) - w_{2}(\tau,m)||_{(\nu,\beta,\mu,k,\epsilon)} \\
\leq
\frac{|A_{\delta_{D},p}|k^{p}C_{2.1}\epsilon_{0}}{\Gamma(\delta_{D}-p)C_{P}(r_{Q,R_{D}})^{\frac{1}{(\delta_{D}-1)k}}}
||w_{1}(\tau,m) - w_{2}(\tau,m)||_{(\nu,\beta,\mu,k,\epsilon)}
\label{fix_point_norm_estim_2_shrink}
\end{multline}
and
\begin{multline}
||\frac{R_{l}(im)}{P_{m}(\tau)}\frac{ \epsilon^{\Delta_{l}-d_{l}+\delta_{l}-1}}{\Gamma( \frac{d_{l,k}}{k} )}
\int_{0}^{\tau^k} (\tau^{k}-s)^{\frac{d_{l,k}}{k}-1}(k^{\delta_l}s^{\delta_l}(w_{1}(s^{1/k},m)-w_{2}(s^{1/k},m)) \frac{ds}{s}
||_{(\nu,\beta,\mu,k,\epsilon)}\\
\leq \frac{ k^{\delta_l}C_{2.2}}{\Gamma(\frac{d_{l,k}}{k}) C_{P}(r_{Q,R_{D}})^{\frac{1}{(\delta_{D}-1)k}}}
|\epsilon|^{\Delta_{l}-d_{l}+\delta_{l} + k(\delta_{l} - \delta_{D}) + d_{l,k}}
\sup_{m \in \mathbb{R}} |\frac{R_{l}(im)}{R_{D}(im)}|
||w_{1}(\tau,m) - w_{2}(\tau,m)||_{(\nu,\beta,\mu,k,\epsilon)}\\
\leq \frac{ k^{\delta_l}C_{2.2}}{\Gamma(\frac{d_{l,k}}{k}) C_{P}(r_{Q,R_{D}})^{\frac{1}{(\delta_{D}-1)k}}}
\epsilon_{0}^{\Delta_{l}-d_{l}+\delta_{l} + k(\delta_{l} - \delta_{D}) + d_{l,k}}
\sup_{m \in \mathbb{R}} |\frac{R_{l}(im)}{R_{D}(im)}|
||w_{1}(\tau,m) - w_{2}(\tau,m)||_{(\nu,\beta,\mu,k,\epsilon)}
\label{fix_point_norm_estim_3_shrink}
\end{multline}
and that
\begin{multline}
|| \frac{R_{l}(im)}{P_{m}(\tau)}\frac{A_{\delta_{l},p}
\epsilon^{\Delta_{l}-d_{l}+\delta_{l}-1}}{\Gamma( \frac{d_{l,k}}{k} + \delta_{l}-p)}
\int_{0}^{\tau^k}(\tau^{k}-s)^{\frac{d_{l,k}}{k}+\delta_{l}-p-1}\\
\times (k^{p}s^{p}(w_{1}(s^{1/k},m) - w_{2}(s^{1/k},m))
\frac{ds}{s}||_{(\nu,\beta,\mu,k,\epsilon)}\\
\leq \frac{|A_{\delta_{l},p}| k^{p}C_{2.2}}{\Gamma( \frac{d_{l,k}}{k} + \delta_{l}-p)
C_{P}(r_{Q,R_{D}})^{\frac{1}{(\delta_{D}-1)k}}} |\epsilon|^{\Delta_{l} - d_{l} + \delta_{l} + k(\delta_{l}-\delta_{D}) + d_{l,k}}\\
\times
\sup_{m \in \mathbb{R}} |\frac{R_{l}(im)}{R_{D}(im)}|
||w_{1}(\tau,m) - w_{2}(\tau,m)||_{(\nu,\beta,\mu,k,\epsilon)}\\
\leq \frac{|A_{\delta_{l},p}| k^{p}C_{2.2}}{\Gamma( \frac{d_{l,k}}{k} + \delta_{l}-p)
C_{P}(r_{Q,R_{D}})^{\frac{1}{(\delta_{D}-1)k}}} \epsilon_{0}^{\Delta_{l} - d_{l} + \delta_{l} + k(\delta_{l}-\delta_{D}) + d_{l,k}}\\
\times
\sup_{m \in \mathbb{R}} |\frac{R_{l}(im)}{R_{D}(im)}|
||w_{1}(\tau,m) - w_{2}(\tau,m)||_{(\nu,\beta,\mu,k,\epsilon)}
 \label{fix_point_norm_estim_4_shrink}
\end{multline}
and that
\begin{multline}
||\frac{\epsilon^{-1}}{P_{m}(\tau) \Gamma(1 + \frac{1}{k})} \int_{0}^{\tau^k}(\tau^{k}-s)^{1/k}
\left( \frac{1}{(2\pi)^{1/2}} s\int_{0}^{s} \int_{-\infty}^{+\infty}
 \varphi_{k}((s-x)^{1/k},m-m_{1},\epsilon) \right. \\
\left. R_{0}(im_{1})(w_{1}(x^{1/k},m_{1})-w_{2}(x^{1/k},m_{1})) \frac{1}{(s-x)x}
dxdm_{1} \right) \frac{ds}{s}||_{(\nu,\beta,\mu,k,\epsilon)}\\
\leq \frac{1}{\Gamma(1 + \frac{1}{k}) (2\pi)^{1/2}}
\frac{ C_{3} ||\varphi_{k}(\tau,m,\epsilon)||_{(\nu,\beta,\mu,k,\epsilon)}
||w_{1}(\tau,m)-w_{2}(\tau,m)||_{(\nu,\beta,\mu,k,\epsilon)} }{ C_{P}
(r_{Q,R_{D}})^{\frac{1}{(\delta_{D}-1)k}} }\\
\leq \frac{1}{\Gamma(1 + \frac{1}{k}) (2\pi)^{1/2}}
\frac{ C_{3} \zeta_{1} ||w_{1}(\tau,m)-w_{2}(\tau,m)||_{(\nu,\beta,\mu,k,\epsilon)} }{ C_{P}
(r_{Q,R_{D}})^{\frac{1}{(\delta_{D}-1)k}} } \label{fix_point_norm_estim_6_shrink}
\end{multline}
together with
\begin{multline}
||\frac{\epsilon^{-1}}{P_{m}(\tau) \Gamma(1 + \frac{1}{k})} \int_{0}^{\tau^k}(\tau^{k}-s)^{1/k}
\frac{1}{(2\pi)^{1/2}}(\int_{-\infty}^{+\infty}C_{0,0}(m-m_{1},\epsilon)R_{0}(im_{1})\\
\times (w_{1}(s^{1/k},m_{1}) - w_{2}(s^{1/k},m_{1})) dm_{1} ) \frac{ds}{s} ||_{(\nu,\beta,\mu,k,\epsilon)}\\
\leq \frac{1}{\Gamma(1 + \frac{1}{k}) (2\pi)^{1/2}}
\frac{ C_{4} \zeta_{0} ||w_{1}(\tau,m)-w_{2}(\tau,m)||_{(\nu,\beta,\mu,k,\epsilon)} }{ C_{P}
(r_{Q,R_{D}})^{\frac{1}{(\delta_{D}-1)k}} } \label{fix_point_norm_estim_7_shrink}
\end{multline}

Now, we take $\varpi$ and $r_{Q,R_{D}}$ such that
\begin{multline}
 \frac{1}{\Gamma(1 + \frac{1}{k}) (2\pi)^{1/2}}
\frac{ C_{3} 2 \varpi }{ C_{P}
(r_{Q,R_{D}})^{\frac{1}{(\delta_{D}-1)k}} }\\
+ \sum_{1 \leq p \leq \delta_{D}-1}
\frac{|A_{\delta_{D},p}|k^{p}C_{2.1}\epsilon_{0}}{\Gamma(\delta_{D}-p)C_{P}(r_{Q,R_{D}})^{\frac{1}{(\delta_{D}-1)k}}}\\
+ \sum_{1 \leq l \leq D-1}  \frac{ k^{\delta_l}C_{2.2}}{\Gamma(\frac{d_{l,k}}{k}) C_{P}(r_{Q,R_{D}})^{\frac{1}{(\delta_{D}-1)k}}}
\epsilon_{0}^{\Delta_{l}-d_{l}+\delta_{l} + k(\delta_{l} - \delta_{D}) + d_{l,k}}
\sup_{m \in \mathbb{R}} |\frac{R_{l}(im)}{R_{D}(im)}|\\
+ \sum_{1 \leq p \leq \delta_{l}-1} \frac{|A_{\delta_{l},p}| k^{p}C_{2.2}}{\Gamma( \frac{d_{l,k}}{k} + \delta_{l}-p)
C_{P}(r_{Q,R_{D}})^{\frac{1}{(\delta_{D}-1)k}}} \epsilon_{0}^{\Delta_{l} - d_{l} + \delta_{l} + k(\delta_{l}-\delta_{D}) + d_{l,k}}
\sup_{m \in \mathbb{R}} |\frac{R_{l}(im)}{R_{D}(im)}|\\
+ \frac{1}{\Gamma(1 + \frac{1}{k}) (2\pi)^{1/2}}
\frac{ C_{3} \zeta_{1} + C_{4} \zeta_{0} }{ C_{P}
(r_{Q,R_{D}})^{\frac{1}{(\delta_{D}-1)k}} } \leq \frac{1}{2} \label{fix_point_sum_shrink<1/2}
\end{multline}

Bearing in mind the estimates (\ref{fix_point_norm_estim_1_shrink}), (\ref{fix_point_norm_estim_2_shrink}),
(\ref{fix_point_norm_estim_3_shrink}), (\ref{fix_point_norm_estim_4_shrink}),
(\ref{fix_point_norm_estim_6_shrink}), (\ref{fix_point_norm_estim_7_shrink})
with the constraint (\ref{fix_point_sum_shrink<1/2}), one gets (\ref{H_shrink}).

Finally, we choose $\varpi$ and $r_{Q,R_{D}}$ such that both
(\ref{fix_point_sum<varpi}) and (\ref{fix_point_sum_shrink<1/2}) are satisfied. This yields our lemma.
\end{proof}
We consider the ball $\bar{B}(0,\varpi) \subset F_{(\nu,\beta,\mu,k,\epsilon)}^{d}$ constructed in Lemma 2 which is a complete
metric space for the norm $||.||_{(\nu,\beta,\mu,k,\epsilon)}$. From the lemma above, we get that $\mathcal{H}_{\epsilon}$ is a
contractive map from $\bar{B}(0,\varpi)$ into itself. Due to the classical contractive mapping theorem, we deduce that
the map $\mathcal{H}_{\epsilon}$ has a unique fixed point denoted by $\omega_{k}(\tau,m,\epsilon)$ (i.e
$\mathcal{H}_{\epsilon}(\omega_{k}(\tau,m,\epsilon))= \omega_{k}(\tau,m,\epsilon)$) in
$\bar{B}(0,\varpi)$, for all $\epsilon \in D(0,\epsilon_{0}) \setminus \{ 0 \}$. Moreover, the function
$\omega_{k}(\tau,m,\epsilon)$ depends holomorphically on $\epsilon$ in $D(0,\epsilon_{0}) \setminus \{ 0 \}$. By construction,
$\omega_{k}(\tau,m,\epsilon)$ defines a solution of the equation (\ref{k_Borel_equation}). This yields the proposition.
\end{proof}

In the next proposition, we construct analytic solutions of the equation (\ref{SCP}).
\begin{prop} Let the assumption (\ref{constraints_k_Borel_equation}) hold. We also choose the sectors $S_{d}$ and $S_{Q,R_{D}}$
in such a way that (\ref{root_cond_1}) and (\ref{root_cond_2}) hold. We take the radius $r_{Q,R_{D}}$ as prescribed in Proposition 9.
We also assume that the inequalities
(\ref{norm_F_varphi_k_psi_k_small}) hold for $\zeta_{0},\zeta_{1},\zeta_{2}$ constructed in Proposition 9. Notice that
the inequalities for $\zeta_{1},\zeta_{2}$ can be satisfied if
$\epsilon_{0}$ is small enough due to the estimates (\ref{norm_F_varphi_k_psi_k_epsilon_0})).

Let $S_{d,\theta,h'|\epsilon|}$ be a bounded sector with aperture $\pi/k < \theta < \pi/k + 2\delta$
(where $2\delta$ is the small aperture of the unbounded sector $S_{d}$), with direction $d$ and radius $h'|\epsilon|$ for some
$h'>0$ independent of $\epsilon$. We choose $0 < \beta' < \beta$.

Then, the equation 
(\ref{SCP}) with initial condition $U(0,m,\epsilon) \equiv 0$ has a solution $(T,m) \mapsto U(T,m,\epsilon)$ defined on
$S_{d,\theta,h'|\epsilon|} \times \mathbb{R}$ for some real number $h'>0$ for all
$\epsilon \in D(0,\epsilon_{0}) \setminus \{ 0 \}$. Let $\epsilon \in D(0,\epsilon_{0}) \setminus \{ 0 \}$, then
for each $T \in S_{d,\theta,h'|\epsilon|}$, the function
$m \mapsto U(T,m,\epsilon)$ belongs to the space $E_{(\beta',\mu)}$ and for each $m \in \mathbb{R}$, the function
$T \mapsto U(T,m,\epsilon)$ is bounded and holomorphic on $S_{d,\theta,h'|\epsilon|}$. Moreover, the function
$U(T,m,\epsilon)$ can be written as a Laplace transform of order $k$ in the direction $d$,
\begin{equation}
U(T,m,\epsilon) = k \int_{L_{\gamma}} \omega_{k}^{d}(u,m,\epsilon) e^{-(\frac{u}{T})^{k}} \frac{du}{u} \label{int_repres_U}
\end{equation} 
along a halfline $L_{\gamma}=\mathbb{R}_{+}e^{i\gamma} \in S_{d} \cup \{ 0 \}$
(the direction $\gamma$ may depend on $T$), where $\omega_{k}^{d}(\tau,m,\epsilon)$ defines a continuous function on
$(\bar{D}(0,\rho) \cup S_{d}) \times \mathbb{R} \times D(0,\epsilon_{0}) \setminus \{ 0 \}$ which is holomorphic with respect to
$(\tau,\epsilon)$ on $(\bar{D}(0,\rho) \cup S_{d}) \times D(0,\epsilon_{0}) \setminus \{ 0 \}$ and satisfies the estimates : there exists
a constant $\varpi_{d}$ (independent of $\epsilon$) such that
\begin{equation}
|\omega_{k}^{d}(\tau,m,\epsilon)| \leq \varpi_{d}(1+ |m|)^{-\mu} e^{-\beta|m|}
\frac{ |\frac{\tau}{\epsilon}|}{1 + |\frac{\tau}{\epsilon}|^{2k}} \exp( \nu |\frac{\tau}{\epsilon}|^{k}) \label{|omega_k_d|<} 
\end{equation}
for all $\tau \in D(0,\rho) \cup S_{d}$, all $m \in \mathbb{R}$, all $\epsilon \in D(0,\epsilon_{0}) \setminus \{ 0 \}$.
\end{prop}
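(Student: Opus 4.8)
The plan is to read Proposition 10 as the statement that the fixed point $\omega_{k}^{d}$ produced in Proposition 9 is precisely the $m_{k}$-Borel transform of the formal solution $\hat{U}$, analytically continued to $S_{d}$ with controlled growth, so that the $m_{k}$-Laplace transform of Definition 3 reconstructs an actual solution of (\ref{SCP}). First I would record that the bound (\ref{|omega_k_d|<}) is immediate: since $\omega_{k}^{d} \in F_{(\nu,\beta,\mu,k,\epsilon)}^{d}$ with $||\omega_{k}^{d}||_{(\nu,\beta,\mu,k,\epsilon)} \leq \varpi$ by Proposition 9, unravelling the definition of the norm in Definition 1 gives exactly (\ref{|omega_k_d|<}) with $\varpi_{d} = \varpi$, a constant independent of $\epsilon$. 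I then define $U^{d}$ by the Laplace integral (\ref{int_repres_U}) along a ray $L_{\gamma} \subset S_{d}$ chosen so that $\cos(k(\gamma-\mathrm{arg}(T))) \geq \delta_{1} > 0$.

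Next I would check that (\ref{int_repres_U}) makes sense and has the stated regularity. Writing $u = re^{i\gamma}$, the kernel satisfies $|e^{-(u/T)^{k}}| = \exp(-(r/|T|)^{k}\cos(k(\gamma-\mathrm{arg}(T)))) \leq \exp(-\delta_{1}(r/|T|)^{k})$, while (\ref{|omega_k_d|<}) bounds $|\omega_{k}^{d}(u,m,\epsilon)|$ by a constant times $(1+|m|)^{-\mu}e^{-\beta|m|}$ times $(r/|\epsilon|)\exp(\nu(r/|\epsilon|)^{k})/(1+(r/|\epsilon|)^{2k})$. The resulting radial integrand decays like $\exp\big((\nu/|\epsilon|^{k} - \delta_{1}/|T|^{k})r^{k}\big)$, so the integral converges as soon as $(|T|/|\epsilon|)^{k} < \delta_{1}/\nu$; this is guaranteed on $S_{d,\theta,h'|\epsilon|}$ once $h'$ is fixed with $h'^{k} < \delta_{1}/\nu$. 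Holomorphy and boundedness of $T \mapsto U^{d}(T,m,\epsilon)$ on the full sector of aperture $\theta \in (\pi/k, \pi/k + 2\delta)$ follow by differentiating under the integral and by deforming $L_{\gamma}$ inside $S_{d}$ (Cauchy's theorem, using holomorphy of $\omega_{k}^{d}$ in $\tau$), exactly as in the general discussion following Definition 3. For the $m$-dependence, multiplying by $(1+|m|)^{\mu}e^{\beta'|m|}$ and using $\beta' < \beta$ turns the exponential weight into $e^{(\beta'-\beta)|m|} \leq 1$, so $m \mapsto U^{d}(T,m,\epsilon)$ lies in $E_{(\beta',\mu)}$ uniformly in $T$.

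The main obstacle is the verification that $U^{d}$ actually solves (\ref{SCP}) with $U^{d}(0,m,\epsilon) \equiv 0$. My plan here is to bridge to the $m_{k}$-summability framework. Expanding $\omega_{k}^{d}$ in powers of $\tau$ on $\bar{D}(0,\rho)$ and inserting into (\ref{k_Borel_equation}) forces its Taylor coefficients at $\tau = 0$ to obey the same recursion as the coefficients $U_{n}(m,\epsilon)/\Gamma(n/k)$ of $\mathcal{B}_{m_{k}}(\hat{U})$; by the uniqueness of this recursion the two series coincide, so $\omega_{k}^{d}$ is the analytic continuation of $\mathcal{B}_{m_{k}}(\hat{U})$ to $S_{d}$ and, thanks to (\ref{|omega_k_d|<}), has exponential growth of order $k$ there. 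Hence $\hat{U}$ is $m_{k}$-summable in the direction $d$ and $U^{d}$ is precisely its $m_{k}$-sum $\mathcal{L}^{d}_{m_{k}}(\mathcal{B}_{m_{k}}(\hat{U}))$. I would then invoke the commutation identities (\ref{Borel_diff}), (\ref{Borel_mult_monom}), (\ref{Borel_product}) of Proposition 6 together with the differential-algebra properties (\ref{sum_prod_deriv_m_k_sum}) of $m_{k}$-sums: applying $\mathcal{L}^{d}_{m_{k}}$ term by term to (\ref{k_Borel_equation}) sends $k\tau^{k}\omega_{k}^{d}$ back to $T^{k+1}\partial_{T}U^{d}$, each weighted convolution $\frac{\tau^{k}}{\Gamma(\cdot)}\int_{0}^{\tau^{k}}(\tau^{k}-s)^{\cdot}s^{\cdot}\omega_{k}^{d}\,\frac{ds}{s}$ back to the corresponding monomial-times-derivative term, and each $m$-convolution with the kernel $\frac{1}{(s-x)x}$ back to the product nonlinearity and the $C_{0}$, $\varphi_{k}$ couplings. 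This reproduces exactly (\ref{SCP_irregular}); dividing by $T^{k+1} \neq 0$ yields (\ref{SCP}). Finally, since $\hat{U}$ has no constant term, the Gevrey asymptotics (\ref{Laplace_k_Gevrey_ae}) of its $m_{k}$-sum force $U^{d}(T,m,\epsilon) \to 0$ as $T \to 0$, giving the initial condition and completing the proof. The delicate point throughout is matching, line by line, the weighted-convolution terms of (\ref{k_Borel_equation}) with their Laplace preimages in (\ref{SCP_irregular}), where the precise exponents $d_{l,k}$ and the formulas of Proposition 6 must be tracked carefully.
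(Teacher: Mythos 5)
Your proposal is correct and follows essentially the same route as the paper: identify the fixed point of Proposition 9 with the analytic continuation of the formal $m_{k}$-Borel transform of $\hat{U}$, read off the bound (\ref{|omega_k_d|<}) from the norm $||\omega_{k}^{d}||_{(\nu,\beta,\mu,k,\epsilon)} \leq \varpi$, conclude $m_{k}$-summability of $\hat{U}$ in direction $d$, and then use the commutation rules of Proposition 6 together with the differential-algebra properties (\ref{sum_prod_deriv_m_k_sum}) to transfer the equation from the Borel plane back to (\ref{SCP_irregular}) and hence to (\ref{SCP}). The only cosmetic difference is that you transport (\ref{k_Borel_equation}) forward through the Laplace transform term by term, whereas the paper argues that the $m_{k}$-sum inherits equation (\ref{SCP_irregular}) from the formal solution $\hat{U}$; both hinge on exactly the same identities.
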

\begin{proof} Taking into account the requirements stated above in Proposition 10, we get that all the assumptions of Proposition 9 are
fulfilled. Therefore, the formal $m_{k}-$Borel transform
$\omega_{k}(\tau,m,\epsilon) = \sum_{n \geq 1} U_{n}(m,\epsilon) \tau^{n}/\Gamma(n/k)$ of the formal series
$\hat{U}(T,m,\epsilon)$ constructed in Proposition 9 is convergent with respect to $\tau$ on $D(0,\rho)$ as series with coefficients
in the Banach space $E_{(\beta,\mu)}$. Moreover, this function
$\omega_{k}(\tau,m,\epsilon)$ can be extended as an analytic function with respect to $\tau$ on the sector $S_{d}$, denoted
$\omega_{k}^{d}(\tau,m,\epsilon)$, that belongs to the Banach space $F_{(\nu,\beta,\mu,k,\epsilon)}^{d}$ and satisfies the
bounds $||\omega_{k}^{d}(\tau,m,\epsilon)||_{(\nu,\beta,\mu,k,\epsilon)} \leq \varpi_{d}$ where $\varpi_{d}$ is a constant
independent of $\epsilon$ in $D(0,\epsilon_{0}) \setminus \{ 0 \}$. This means that (\ref{|omega_k_d|<}) must hold. As a result, we
get that the formal series $\hat{U}(T,m,\epsilon) \in TE_{(\beta,\mu)}[[T]]$ is $m_{k}-$summable in the direction $d$ (see Definition 3). 
By construction, its $m_{k}-$sum $U(T,m,\epsilon)$ in direction $d$ defines a holomorphic function on the sector
$S_{d,\theta,h'|\epsilon|}$ described above in Proposition 10 with values in $E_{(\beta,\mu)}$, for
all $\epsilon \in D(0,\epsilon_{0}) \setminus \{ 0 \}$.
On the other hand, the series $C_{0}(T,m,\epsilon), F(T,m,\epsilon) \in TE_{(\beta,\mu)}[[T]]$ are convergent. Therefore, these series
are $m_{k}$-summable in any direction $d$ and their $m_{k}-$sums satisfy
$$ \mathcal{L}_{m_k}^{d}(\varphi_{k}(\tau,m,\epsilon))(T) = C_{0}(T,m,\epsilon) \ \ , \ \
\mathcal{L}_{m_k}^{d}(\psi_{k}(\tau,m,\epsilon))(T) = F(T,m,\epsilon)
$$
for all $T \in D(0,T_{0}/2)$. Finally, using the properties for the sum, product and derivative of
$m_{k}-$sums described in (\ref{sum_prod_deriv_m_k_sum}), we deduce that the $m_{k}-$sum $U(T,m,\epsilon)$ in direction $d$
satisfies the equation (\ref{SCP_irregular}) as a function of $(T,m)$ on $S_{d,\theta,h'|\epsilon|} \times \mathbb{R}$, for all
$\epsilon \in D(0,\epsilon_{0}) \setminus \{ 0 \}$, since the formal series $\hat{U}(T,m,\epsilon)$ satisfies the equation
(\ref{SCP_irregular}). As a result, the function $U(T,m,\epsilon)$ also satisfies
the equation (\ref{SCP}) as a function of $(T,m)$ on $S_{d,\theta,h'|\epsilon|} \times \mathbb{R}$, for all
$\epsilon \in D(0,\epsilon_{0}) \setminus \{ 0 \}$. 
\end{proof}

\section{Analytic solutions of a nonlinear initial value Cauchy problem with complex parameter}

Let $k \geq 1$ and $D \geq 2$ be integers. For $1 \leq l \leq D$, let
$d_{l},\delta_{l},\Delta_{l} \geq 0$ be nonnegative integers.
We assume that 
\begin{equation}
1 = \delta_{1} \ \ , \ \ \delta_{l} < \delta_{l+1}, \label{assum_delta_l}
\end{equation}
for all $1 \leq l \leq D-1$. We make also the assumption that
\begin{equation}
d_{D} = (\delta_{D}-1)(k+1) \ \ , \ \ d_{l} > (\delta_{l}-1)(k+1) \ \ , \ \ \Delta_{D} = d_{D} - \delta_{D} + 1 \label{assum_d_delta_Delta}
\end{equation}
for all $1 \leq l \leq D-1$. Let $Q(X),Q_{1}(X),Q_{2}(X),R_{l}(X) \in \mathbb{C}[X]$, $0 \leq l \leq D$, be polynomials such that
\begin{multline}
\mathrm{deg}(Q) \geq \mathrm{deg}(R_{D}) \geq \mathrm{deg}(R_{l}) \ \ , \ \
\mathrm{deg}(R_{D}) \geq \mathrm{deg}(Q_{1}) \ \ , \ \ \mathrm{deg}(R_{D}) \geq \mathrm{deg}(Q_{2}),\\
Q(im) \neq 0 \ \ , \ \ R_{D}(im) \neq 0 \label{assum_deg_Q_R}
\end{multline}
for all $m \in \mathbb{R}$, all $0 \leq l \leq D-1$.

We consider the following nonlinear initial value problem
\begin{multline}
Q(\partial_{z})(\partial_{t}u(t,z,\epsilon)) = (Q_{1}(\partial_{z})u(t,z,\epsilon))(Q_{2}(\partial_{z})u(t,z,\epsilon))
+ \sum_{l=1}^{D} \epsilon^{\Delta_{l}}t^{d_l}\partial_{t}^{\delta_l}R_{l}(\partial_{z})u(t,z,\epsilon)\\
+ c_{0}(t,z,\epsilon)R_{0}(\partial_{z})u(t,z,\epsilon) + f(t,z,\epsilon) \label{ICP_main}
\end{multline}
for given initial data $u(0,z,\epsilon) \equiv 0$.

The coefficient $c_{0}(t,z,\epsilon)$ and the forcing term $f(t,z,\epsilon)$ are constructed as follows. We consider sequences of functions
$m \mapsto C_{0,n}(m,\epsilon)$, for $n \geq 0$ and $m \mapsto F_{n}(m,\epsilon)$, for $n \geq 1$, that belong to the Banach space
$E_{(\beta,\mu)}$ for some $\beta > 0$, $\mu > \max( \mathrm{deg}(Q_{1})+1, \mathrm{deg}(Q_{2})+1)$ and which
depend holomorphically on $\epsilon \in D(0,\epsilon_{0})$. We assume that there exist constants $K_{0},T_{0}>0$
such that (\ref{norm_beta_mu_F_n}) hold for all $n \geq 1$, for all $\epsilon \in D(0,\epsilon_{0})$.
We deduce that the functions
$$ \mathbf{C}_{0}(T,z,\epsilon) = \sum_{n \geq 0} \mathcal{F}^{-1}(m \mapsto C_{0,n}(m,\epsilon))(z) T^{n} \ \ , \ \
\mathbf{F}(T,z,\epsilon) = \sum_{n \geq 1} \mathcal{F}^{-1}(m \mapsto F_{n}(m,\epsilon))(z) T^{n} $$
represent bounded holomorphic functions on $D(0,T_{0}/2) \times H_{\beta'} \times D(0,\epsilon_{0})$ for any
$0 < \beta' < \beta$ (where
$\mathcal{F}^{-1}$ denotes the inverse Fourier transform defined in Proposition 7). We define
the coefficient $c_{0}(t,z,\epsilon)$ and the forcing term $f(t,z,\epsilon)$ as
\begin{equation}
c_{0}(t,z,\epsilon) = \mathbf{C}_{0}(\epsilon t,z,\epsilon) \ \ , \ \ f(t,z,\epsilon) = \mathbf{F}(\epsilon t , z,\epsilon).
\label{defin_c_0_f}
\end{equation}
The functions $c_{0}$ and $f$ are holomorphic and bounded on $D(0,r) \times H_{\beta'} \times D(0,\epsilon_{0})$ where
$r \epsilon_{0} < T_{0}/2$.

We make the additional assumption that there exists an unbounded sector
$$ S_{Q,R_{D}} = \{ z \in \mathbb{C} / |z| \geq r_{Q,R_{D}} \ \ , \ \ |\mathrm{arg}(z) - d_{Q,R_{D}}| \leq \eta_{Q,R_{D}} \} $$
with direction $d_{Q,R_{D}} \in \mathbb{R}$, aperture $\eta_{Q,R_{D}}>0$ for some radius $r_{Q,R_{D}}>0$ such that
\begin{equation}
\frac{Q(im)}{R_{D}(im)} \in S_{Q,R_{D}} \label{assum_Q_R_D}
\end{equation} 
for all $m \in \mathbb{R}$.

\begin{defin} Let $\varsigma \geq 2$ be an integer. For all $0 \leq p \leq \varsigma-1$, we consider open sectors
$\mathcal{E}_{p}$ centered at $0$, with radius $\epsilon_{0}$ and opening
$\frac{\pi}{k}+\kappa_{p}$, with $\kappa_{p}>0$ small enough such that
$\mathcal{E}_{p} \cap \mathcal{E}_{p+1} \neq \emptyset$, for all
$0 \leq p \leq \varsigma-1$ (with the convention that $\mathcal{E}_{\varsigma} = \mathcal{E}_{0})$. Moreover, we assume that
the intersection of any three different elements in $(\mathcal{E}_{p})_{0 \leq p \leq \varsigma}$ is empty and that
$\cup_{p=0}^{\varsigma - 1} \mathcal{E}_{p} = \mathcal{U} \setminus \{ 0 \}$,
where $\mathcal{U}$ is some neighborhood of 0 in $\mathbb{C}$. Such a set of sectors
$\{ \mathcal{E}_{p} \}_{0 \leq p \leq \varsigma - 1}$ is called a good covering in $\mathbb{C}^{\ast}$.
\end{defin}

\begin{defin} Let $\{ \mathcal{E}_{p} \}_{0 \leq p \leq \varsigma - 1}$ be a good covering in $\mathbb{C}^{\ast}$. Let
$\mathcal{T}$ be an open bounded sector centered at 0 with radius $r_{\mathcal{T}}$ and consider a family of open sectors
$$ S_{\mathfrak{d}_{p},\theta,\epsilon_{0}r_{\mathcal{T}}} =
\{ T \in \mathbb{C}^{\ast} / |T| < \epsilon_{0}r_{\mathcal{T}} \ \ , \ \ |\mathfrak{d}_{p} - \mathrm{arg}(T)| < \theta/2 \} $$
with aperture $\theta > \pi/k$ and where $\mathfrak{d}_{p} \in \mathbb{R}$, for all $0 \leq p \leq \varsigma-1$, are directions
which satisfy
the following constraints: Let $q_{l}(m)$ be the roots of the polynomials (\ref{factor_P_m}) defined by (\ref{defin_roots}) and
$S_{\mathfrak{d}_p}$, $0 \leq p \leq \varsigma -1$ be unbounded sectors centered at 0 with directions
$\mathfrak{d}_{p}$ and with small aperture. We assume
that\\
1) There exists a constant $M_{1}>0$ such that
\begin{equation}
|\tau - q_{l}(m)| \geq M_{1}(1 + |\tau|) \label{root_cond_1_in_defin}
\end{equation}
for all $0 \leq l \leq (\delta_{D}-1)k-1$, all $m \in \mathbb{R}$, all $\tau \in S_{\mathfrak{d}_p} \cup \bar{D}(0,\rho)$, for all
$0 \leq p \leq \varsigma-1$.\\
2) There exists a constant $M_{2}>0$ such that
\begin{equation}
|\tau - q_{l_0}(m)| \geq M_{2}|q_{l_0}(m)| \label{root_cond_2_in_defin}
\end{equation}
for some $l_{0} \in \{0,\ldots,(\delta_{D}-1)k-1 \}$, all $m \in \mathbb{R}$, all $\tau \in S_{\mathfrak{d}_p} \cup \bar{D}(0,\rho)$, for
all $0 \leq p \leq \varsigma - 1$.\\
3) For all $0 \leq p \leq \varsigma - 1$, for all $t \in \mathcal{T}$, all $\epsilon \in \mathcal{E}_{p}$, we have that
$\epsilon t \in S_{\mathfrak{d}_{p},\theta,\epsilon_{0}r_{\mathcal{T}}}$.\medskip

\noindent We say that the family
$\{ (S_{\mathfrak{d}_{p},\theta,\epsilon_{0}r_{\mathcal{T}}})_{0 \leq p \leq \varsigma-1},\mathcal{T} \}$
is associated to the good covering $\{ \mathcal{E}_{p} \}_{0 \leq p \leq \varsigma - 1}$.
\end{defin}

In the next first main result, we construct a family of actual holomorphic solutions to the equation (\ref{ICP_main}) for given
initial data at $t=0$ being identically equal to zero, defined on the sectors $\mathcal{E}_{p}$ with respect to the
complex parameter $\epsilon$. We can also control the difference between any two neighboring solutions
on the intersection of sectors $\mathcal{E}_{p} \cap \mathcal{E}_{p+1}$ and show that it is exponentially
flat of order at most $k$.

\begin{theo} We consider the equation (\ref{ICP_main}) and we assume that the constraints
(\ref{assum_delta_l}), (\ref{assum_d_delta_Delta}), (\ref{assum_deg_Q_R}) and (\ref{assum_Q_R_D}) hold. We also make the
additional assumption that
\begin{equation}
\delta_{D} \geq \delta_{l} + \frac{2}{k} \ \ , \ \ \Delta_{l}-d_{l}+\delta_{l} + k(\delta_{l} - \delta_{D}) + d_{l,k} \geq 0,
\label{constraints_k_Borel_equation_for_u_p}
\end{equation}
hold for all $1 \leq l \leq D-1$. Let the coefficient $c_{0}(t,z,\epsilon)$ and forcing term $f(t,z,\epsilon)$ be constructed as in
(\ref{defin_c_0_f}). Let a good covering $\{ \mathcal{E}_{p} \}_{0 \leq p \leq \varsigma - 1}$ in $\mathbb{C}^{\ast}$ be given, for
which a family of sectors $\{ (S_{\mathfrak{d}_{p},\theta,\epsilon_{0}r_{\mathcal{T}}})_{0 \leq p \leq \varsigma-1},\mathcal{T} \}$
associated to this good covering can be considered.

Then, there exist a radius $r_{Q,R_{D}}>0$ large enough, $\epsilon_{0}>0$ small enough and
a constant $\zeta_{0}>0$ small enough such that if
$$ ||C_{0,0}(m,\epsilon)||_{(\beta,\mu)} < \zeta_{0} $$
for all $\epsilon \in D(0,\epsilon_{0}) \setminus \{ 0 \}$, then for every $0 \leq p \leq \varsigma-1$,
one can construct a solution $u_{p}(t,z,\epsilon)$ of the equation (\ref{ICP_main}) with $u_{p}(0,z,\epsilon) \equiv 0$ which
defines a bounded holomorphic function on the domain $(\mathcal{T} \cap D(0,h')) \times H_{\beta'} \times
\mathcal{E}_{i}$ for any given $0< \beta'< \beta$ and for some $h'>0$. Moreover, there exist constants $0 < h'' \leq h'$,
$K_{p},M_{p}>0$ (independent of $\epsilon$)
such that
\begin{equation}
\sup_{t \in \mathcal{T} \cap D(0,h''), z \in H_{\beta'}}
|u_{p+1}(t,z,\epsilon) - u_{p}(t,z,\epsilon)| \leq K_{p}e^{-\frac{M_p}{|\epsilon|^{k}}}
\label{exp_small_difference_u_p}
\end{equation}
for all $\epsilon \in \mathcal{E}_{p+1} \cap \mathcal{E}_{p}$, for all $0 \leq p \leq \varsigma-1$ (where by convention
$u_{\varsigma}=u_{0}$).
\end{theo}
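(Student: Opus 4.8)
The plan is to produce each $u_p$ by transporting the auxiliary solution of Proposition 10 back through the change of variables $T=\epsilon t$ and the inverse Fourier transform. First I would fix $0\le p\le \varsigma-1$ and apply Propositions 9 and 10 along the direction $d=\mathfrak{d}_p$: the conditions (\ref{root_cond_1_in_defin}) and (\ref{root_cond_2_in_defin}) supplied by the associated family are exactly (\ref{root_cond_1}) and (\ref{root_cond_2}), while the smallness of $\zeta_0$ together with $\epsilon_0$ (using (\ref{norm_F_varphi_k_psi_k_epsilon_0}) to force $\|\varphi_k\|\le\zeta_1$ and $\|\psi_k\|\le\zeta_2$) guarantees the hypotheses of Proposition 9. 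This yields a holomorphic $\omega_k^{\mathfrak{d}_p}(\tau,m,\epsilon)$ satisfying (\ref{|omega_k_d|<}) and its Laplace transform $U^{\mathfrak{d}_p}(T,m,\epsilon)$ solving (\ref{SCP}). I then set
$$u_p(t,z,\epsilon)=\mathcal{F}^{-1}\big(m\mapsto U^{\mathfrak{d}_p}(\epsilon t,m,\epsilon)\big)(z).$$
By condition 3 of Definition 5, $\epsilon t\in S_{\mathfrak{d}_p,\theta,\epsilon_0 r_{\mathcal{T}}}$ whenever $t\in\mathcal{T}$ and $\epsilon\in\mathcal{E}_p$; restricting to $t\in\mathcal{T}\cap D(0,h')$ places $\epsilon t$ in the radius-$h'|\epsilon|$ sector of Proposition 10, so $m\mapsto U^{\mathfrak{d}_p}(\epsilon t,m,\epsilon)$ lies in $E_{(\beta',\mu)}$ and Proposition 7 makes $u_p$ a bounded holomorphic function on $(\mathcal{T}\cap D(0,h'))\times H_{\beta'}\times\mathcal{E}_p$.

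Next I would verify that $u_p$ solves (\ref{ICP_main}). Since $T=\epsilon t$ gives $\partial_t=\epsilon\partial_T$ and $t^{d_l}\partial_t^{\delta_l}=\epsilon^{\delta_l-d_l}T^{d_l}\partial_T^{\delta_l}$, equation (\ref{SCP}) is precisely the Fourier transform in $z$ of (\ref{ICP_main}) after this substitution and division by $\epsilon$. Using rule (\ref{dz_fourier}) of Proposition 7 (which turns the factors $Q_1(im),Q_2(im),R_l(im),Q(im)$ into the operators $Q_1(\partial_z),Q_2(\partial_z),R_l(\partial_z),Q(\partial_z)$) together with rule (\ref{prod_fourier}) (which turns the convolutions in $m$ into the products $(Q_1(\partial_z)u)(Q_2(\partial_z)u)$ and the coefficient convolutions into $c_0 R_0(\partial_z)u$ and $f$), the identity satisfied by $U^{\mathfrak{d}_p}$ transfers verbatim to $u_p$. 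The vanishing initial data is immediate from $U^{\mathfrak{d}_p}(0,m,\epsilon)\equiv0$, whence $u_p(0,z,\epsilon)=\mathcal{F}^{-1}(0)=0$.

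The crux is the estimate (\ref{exp_small_difference_u_p}). For $\epsilon\in\mathcal{E}_p\cap\mathcal{E}_{p+1}$ the two functions $\omega_k^{\mathfrak{d}_p}$ and $\omega_k^{\mathfrak{d}_{p+1}}$ coincide on $D(0,\rho)$ with the common convergent series $\omega_k$, so I would deform the two Laplace rays in (\ref{int_repres_U}), split each at radius $\rho$, and (since the integrand $\omega_k(u)e^{-(u/(\epsilon t))^k}/u$ is holomorphic on $D(0,\rho)$) replace the two inner segments by a single arc $C_\rho$, obtaining
$$U^{\mathfrak{d}_{p+1}}(\epsilon t,m,\epsilon)-U^{\mathfrak{d}_p}(\epsilon t,m,\epsilon)=k\int_{L_{\rho,\gamma_{p+1}}}\!\!-\,k\int_{L_{\rho,\gamma_p}}+\,k\int_{C_\rho}.$$
On $C_\rho$ one has $|e^{-(u/(\epsilon t))^k}|\le\exp(-\delta_1(\rho/|\epsilon t|)^k)$, and since $|t|\le h''$ this is bounded by $\exp(-M/|\epsilon|^k)$; on each tail I combine (\ref{|omega_k_d|<}), which contributes $\exp(\nu|u/\epsilon|^k)$, with $|e^{-(u/(\epsilon t))^k}|\le\exp(-\delta_1 h''^{-k}|u/\epsilon|^k)$, so that choosing $h''$ small enough to force $\delta_1 h''^{-k}>\nu$ makes the net exponent $-(\delta_1 h''^{-k}-\nu)|u/\epsilon|^k$, and integration over $|u|\ge\rho$ produces a bound $C(1+|m|)^{-\mu}e^{-\beta|m|}e^{-M/|\epsilon|^k}$. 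Hence $\|U^{\mathfrak{d}_{p+1}}-U^{\mathfrak{d}_p}\|_{(\beta',\mu)}\le C e^{-M/|\epsilon|^k}$, and applying $\mathcal{F}^{-1}$ (Proposition 7, with $\beta'<\beta$ absorbing the weight) converts this into the sup-norm estimate (\ref{exp_small_difference_u_p}) on $H_{\beta'}$. I expect this last step --- the contour deformation and the quantitative balancing of the Borel growth rate $\nu$ against the Laplace decay through the choice of $h''$ --- to be the main technical obstacle.
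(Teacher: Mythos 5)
Your proposal is correct and follows essentially the same route as the paper: construction of $u_p$ as the inverse Fourier transform of the Laplace transform $U^{\mathfrak{d}_p}(\epsilon t,m,\epsilon)$ furnished by Propositions 9 and 10, verification of the equation via the commutation rules of Proposition 7, and the flatness estimate via the same three-term contour decomposition (two tails beyond radius $\rho/2$ plus an arc joining the two rays, legitimate because both $\omega_k^{\mathfrak{d}_p}$ and $\omega_k^{\mathfrak{d}_{p+1}}$ restrict to the common convergent germ $\omega_k$ on $D(0,\rho)$). Your key quantitative point --- shrinking $h''$ so that $\delta_1/|t|^k-\nu>\delta_2>0$, which lets the Laplace kernel dominate the exponential growth of order $k$ in (\ref{|omega_k_d_p|<}) --- is exactly the condition $|t|<(\delta_1/(\delta_2+\nu))^{1/k}$ used in the paper's estimates (\ref{I_1_exp_small_order_k})--(\ref{I_3_exp_small_order_k}).
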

\begin{proof} Using Proposition 10, one can choose $r_{Q,R_{D}}>0$ large enough, $\epsilon_{0}>0$ small enough and
$\zeta_{0}>0$ small enough such that
$$ ||C_{0,0}(m,\epsilon)||_{(\beta,\mu)} \leq \zeta_{0} $$
for all $\epsilon \in D(0,\epsilon_{0}) \setminus \{ 0 \}$ such that
for each direction $\mathfrak{d}_{p}$ with $0 \leq p \leq \varsigma - 1$, one can construct a
function $U^{\mathfrak{d}_p}(T,m,\epsilon)$ which satisfies
$U^{\mathfrak{d}_p}(0,m,\epsilon) \equiv 0$ and solves the equation
\begin{multline}
Q(im)(\partial_{T}U(T,m,\epsilon) ) =
\epsilon^{-1}\frac{1}{(2\pi)^{1/2}}\int_{-\infty}^{+\infty}Q_{1}(i(m-m_{1}))U(T,m-m_{1},\epsilon)\\
\times Q_{2}(im_{1})U(T,m_{1},\epsilon) dm_{1} \\
+ \sum_{l=1}^{D} R_{l}(im) \epsilon^{\Delta_{l} - d_{l} + \delta_{l} - 1} T^{d_{l}} \partial_{T}^{\delta_l}U(T,m,\epsilon)\\
+ \epsilon^{-1}\frac{1}{(2\pi)^{1/2}}\int_{-\infty}^{+\infty}C_{0}(T,m-m_{1},\epsilon)U(T,m_{1},\epsilon) dm_{1}\\
+ \epsilon^{-1}\frac{1}{(2\pi)^{1/2}}\int_{-\infty}^{+\infty}C_{0,0}(m-m_{1},\epsilon)U(T,m_{1},\epsilon) dm_{1} 
+ \epsilon^{-1}F(T,m,\epsilon)
\label{SCP_in_main_result}
\end{multline}
where
$$ C_{0}(T,m,\epsilon) = \sum_{n \geq 1} C_{0,n}(m,\epsilon) T^{n} \ \ , \ \ F(T,m,\epsilon) = \sum_{n \geq 1} F_{n}(m,\epsilon) T^{n} $$
are convergent series in $D(0,T_{0}/2)$ with values in $E_{(\beta,\mu)}$, for all $\epsilon \in D(0,\epsilon_{0}) \setminus \{ 0 \}$. The
function $(T,m) \mapsto U^{\mathfrak{d}_p}(T,m,\epsilon)$ is well defined on the domain
$S_{\mathfrak{d}_{p},\theta,h'|\epsilon|} \times \mathbb{R}$
where $h'>0$ is some real number, for all $\epsilon \in D(0,\epsilon_{0}) \setminus \{ 0 \}$. Moreover,
$U^{\mathfrak{d}_p}(T,m,\epsilon)$ can be written as a Laplace transform of order $k$ in the direction $\mathfrak{d}_p$,
\begin{equation}
U^{\mathfrak{d}_p}(T,m,\epsilon) = k \int_{L_{\gamma_{p}}} \omega_{k}^{\mathfrak{d}_p}(u,m,\epsilon)
e^{-(\frac{u}{T})^{k}} \frac{du}{u} \label{int_repres_U_d_p}
\end{equation} 
along a halfline $L_{\gamma_{p}}=\mathbb{R}_{+}e^{i\gamma_{p}} \in S_{\mathfrak{d}_p} \cup \{ 0 \}$
(the direction $\gamma_{p}$ may depend on $T$), where $\omega_{k}^{\mathfrak{d}_p}(\tau,m,\epsilon)$ defines a continuous function on
$(\bar{D}(0,\rho) \cup S_{d_p}) \times \mathbb{R} \times D(0,\epsilon_{0}) \setminus \{ 0 \}$ which is holomorphic with respect to
$(\tau,\epsilon)$ on $(\bar{D}(0,\rho) \cup S_{\mathfrak{d}_p}) \times D(0,\epsilon_{0}) \setminus \{ 0 \}$ for any $m \in \mathbb{R}$
and satisfies the estimates : there exists
a constant $\varpi_{\mathfrak{d}_p}$ (independent of $\epsilon$) such that
\begin{equation}
|\omega_{k}^{\mathfrak{d}_p}(\tau,m,\epsilon)| \leq \varpi_{\mathfrak{d}_p}(1+ |m|)^{-\mu} e^{-\beta|m|}
\frac{ |\frac{\tau}{\epsilon}|}{1 + |\frac{\tau}{\epsilon}|^{2k}} \exp( \nu |\frac{\tau}{\epsilon}|^{k}) \label{|omega_k_d_p|<} 
\end{equation}
for all $\tau \in D(0,\rho) \cup S_{\mathfrak{d}_{p}}$, all $m \in \mathbb{R}$, all
$\epsilon \in D(0,\epsilon_{0}) \setminus \{ 0 \}$. It is worth
noticing that all the functions $\tau \mapsto \omega_{k}^{\mathfrak{d}_p}(\tau,m,\epsilon)$ are analytic continuation on the sectors
$S_{\mathfrak{d}_p}$ of a common function denoted by
$$ \omega_{k}(\tau,m,\epsilon) = \sum_{n \geq 1} U_{n}(m,\epsilon) \frac{\tau^n}{\Gamma(\frac{n}{k})} $$
which is a convergent series on $D(0,\rho)$ with coefficients in $E_{(\beta,\mu)}$ and
where $U_{n}(m,\epsilon) \in E_{(\beta,\mu)}$ are the coefficients of the formal series
$\hat{U}(T,m,\epsilon) = \sum_{n \geq 1} U_{n}(m,\epsilon)T^{n}$ solution of the equation (\ref{SCP_in_main_result}), for
all $\epsilon \in D(0,\epsilon_{0}) \setminus \{ 0 \}$. Using the
estimates (\ref{|omega_k_d_p|<}), we get that the function 
$$ (T,z) \mapsto \mathbf{U}^{\mathfrak{d}_p}(T,z,\epsilon) = \mathcal{F}^{-1}(m \mapsto U^{\mathfrak{d}_p}(T,m,\epsilon))(z) $$
defines a bounded holomorphic function on $S_{\mathfrak{d}_{p},\theta,h'|\epsilon|} \times H_{\beta'}$, for all
$\epsilon \in D(0,\epsilon_{0}) \setminus \{ 0 \}$ and any $0 < \beta' < \beta$. For all $0 \leq p \leq \varsigma - 1$, we define
\begin{equation}
u_{p}(t,z,\epsilon) = \mathbf{U}^{\mathfrak{d}_p}(\epsilon t,z,\epsilon) = \frac{k}{(2\pi)^{1/2}}\int_{-\infty}^{+\infty}
\int_{L_{\gamma_{p}}}
\omega_{k}^{\mathfrak{d}_p}(u,m,\epsilon) e^{-(\frac{u}{\epsilon t})^{k}} e^{izm} \frac{du}{u} dm.
\end{equation}
By construction (see 3) in Definition 5), the function $u_{p}(t,z,\epsilon)$ defines a bounded holomorphic function on
the domain $(\mathcal{T} \cap D(0,h')) \times H_{\beta'} \times \mathcal{E}_{p}$. Moreover, we have
$u_{p}(0,z,\epsilon) \equiv 0$ and using the properties of the Fourier inverse transform from Proposition 7, we deduce that
$u_{p}(t,z,\epsilon)$ solves the main equation (\ref{ICP_main}) on
$(\mathcal{T} \cap D(0,h')) \times H_{\beta'} \times \mathcal{E}_{p}$.

Now, we proceed to the proof of the estimates (\ref{exp_small_difference_u_p}). Let $p \in \{ 0,\ldots,\varsigma - 1 \}$. Using the fact that
the function $u \mapsto \omega_{k}(u,m,\epsilon) \exp( -(\frac{u}{\epsilon t})^{k} )/u$ is holomorphic on $D(0,\rho)$ for all
$(m,\epsilon) \in \mathbb{R} \times (D(0,\epsilon_{0}) \setminus \{ 0 \})$, its integral along the union of a segment starting from
0 to $(\rho/2)e^{i\gamma_{p+1}}$, an arc of circle with radius $\rho/2$ which connects
$(\rho/2)e^{i\gamma_{p+1}}$ and $(\rho/2)e^{i\gamma_{p}}$ and a segment starting from
$(\rho/2)e^{i\gamma_{p}}$ to 0, is equal to zero. Therefore, we can write the difference $u_{p+1} - u_{p}$ as a sum of three
integrals,
\begin{multline}
u_{p+1}(t,z,\epsilon) - u_{p}(t,z,\epsilon) = \frac{k}{(2\pi)^{1/2}}\int_{-\infty}^{+\infty}
\int_{L_{\rho/2,\gamma_{p+1}}}
\omega_{k}^{\mathfrak{d}_{p+1}}(u,m,\epsilon) e^{-(\frac{u}{\epsilon t})^{k}} e^{izm} \frac{du}{u} dm\\ -
\frac{k}{(2\pi)^{1/2}}\int_{-\infty}^{+\infty}
\int_{L_{\rho/2,\gamma_{p}}}
\omega_{k}^{\mathfrak{d}_p}(u,m,\epsilon) e^{-(\frac{u}{\epsilon t})^{k}} e^{izm} \frac{du}{u} dm\\
+ \frac{k}{(2\pi)^{1/2}}\int_{-\infty}^{+\infty}
\int_{C_{\rho/2,\gamma_{p},\gamma_{p+1}}}
\omega_{k}(u,m,\epsilon) e^{-(\frac{u}{\epsilon t})^{k}} e^{izm} \frac{du}{u} dm \label{difference_u_p_decomposition}
\end{multline}
where $L_{\rho/2,\gamma_{p+1}} = [\rho/2,+\infty)e^{i\gamma_{p+1}}$,
$L_{\rho/2,\gamma_{p}} = [\rho/2,+\infty)e^{i\gamma_{p}}$ and
$C_{\rho/2,\gamma_{p},\gamma_{p+1}}$ is an arc of circle with radius connecting
$(\rho/2)e^{i\gamma_{p}}$ and $(\rho/2)e^{i\gamma_{p+1}}$ with a well chosen orientation.\medskip

We give estimates for the quantity
$$ I_{1} = \left| \frac{k}{(2\pi)^{1/2}}\int_{-\infty}^{+\infty}
\int_{L_{\rho/2,\gamma_{p+1}}}
\omega_{k}^{\mathfrak{d}_{p+1}}(u,m,\epsilon) e^{-(\frac{u}{\epsilon t})^{k}} e^{izm} \frac{du}{u} dm \right|.
$$
By construction, the direction $\gamma_{p+1}$ (which depends on $\epsilon t$) is chosen in such a way that
$\cos( k( \gamma_{p+1} - \mathrm{arg}(\epsilon t) )) \geq \delta_{1}$, for all
$\epsilon \in \mathcal{E}_{p} \cap \mathcal{E}_{p+1}$, all $t \in \mathcal{T} \cap D(0,h')$, for some fixed $\delta_{1} > 0$.
From the estimates (\ref{|omega_k_d_p|<}), we get that
\begin{multline}
I_{1} \leq \frac{k}{(2\pi)^{1/2}} \int_{-\infty}^{+\infty} \int_{\rho/2}^{+\infty}
\varpi_{\mathfrak{d}_{p+1}}(1+|m|)^{-\mu} e^{-\beta|m|}
\frac{ \frac{r}{|\epsilon|}}{1 + (\frac{r}{|\epsilon|})^{2k} } \\
\times \exp( \nu (\frac{r}{|\epsilon|})^{k} )
\exp(-\frac{\cos(k(\gamma_{p+1} - \mathrm{arg}(\epsilon t)))}{|\epsilon t|^{k}}r^{k}) e^{-m\mathrm{Im}(z)} \frac{dr}{r} dm\\
\leq \frac{k\varpi_{\mathfrak{d}_{p+1}}}{(2\pi)^{1/2}} \int_{-\infty}^{+\infty} e^{-(\beta - \beta')|m|} dm
\int_{\rho/2}^{+\infty}\frac{1}{|\epsilon|} \exp( -(\frac{\delta_{1}}{|t|^{k}} - \nu)(\frac{r}{|\epsilon|})^{k} ) dr\\
\leq  \frac{2k\varpi_{\mathfrak{d}_{p+1}}}{(2\pi)^{1/2}} \int_{0}^{+\infty} e^{-(\beta - \beta')m} dm
\int_{\rho/2}^{+\infty} \frac{|\epsilon|^{k-1}}{(\frac{\delta_1}{|t|^{k}} - \nu)k(\frac{\rho}{2})^{k-1}}
\times \frac{ (\frac{\delta_1}{|t|^{k}} - \nu)k r^{k-1} }{|\epsilon|^k}
\exp( -(\frac{\delta_{1}}{|t|^{k}} - \nu)(\frac{r}{|\epsilon|})^{k} ) dr\\
\leq
\frac{2k\varpi_{\mathfrak{d}_{p+1}}}{(2\pi)^{1/2}} \frac{|\epsilon|^{k-1}}{(\beta - \beta')
(\frac{\delta_{1}}{|t|^{k}} - \nu)k(\frac{\rho}{2})^{k-1}} \exp( -(\frac{\delta_1}{|t|^{k}} - \nu) \frac{(\rho/2)^k}{|\epsilon|^k} )\\
\leq \frac{2k\varpi_{\mathfrak{d}_{p+1}}}{(2\pi)^{1/2}} \frac{|\epsilon|^{k-1}}{(\beta - \beta')
\delta_{2}k(\frac{\rho}{2})^{k-1}} \exp( -\delta_{2} \frac{(\rho/2)^k}{|\epsilon|^k} ) \label{I_1_exp_small_order_k}
\end{multline}
for all $t \in \mathcal{T} \cap D(0,h')$ and $|\mathrm{Im}(z)| \leq \beta'$ with
$|t| < (\frac{\delta_{1}}{\delta_{2} + \nu})^{1/k}$, for some $\delta_{2}>0$, for all
$\epsilon \in \mathcal{E}_{p} \cap \mathcal{E}_{p+1}$.\medskip

In the same way, we also give estimates for the integral
$$ I_{2} = \left| \frac{k}{(2\pi)^{1/2}}\int_{-\infty}^{+\infty}
\int_{L_{\rho/2,\gamma_{p}}}
\omega_{k}^{\mathfrak{d}_{p}}(u,m,\epsilon) e^{-(\frac{u}{\epsilon t})^{k}} e^{izm} \frac{du}{u} dm \right|.
$$
Namely, the direction $\gamma_{p}$ (which depends on $\epsilon t$) is chosen in such a way that
$\cos( k( \gamma_{p} - \mathrm{arg}(\epsilon t) )) \geq \delta_{1}$, for all
$\epsilon \in \mathcal{E}_{p} \cap \mathcal{E}_{p+1}$, all $t \in \mathcal{T} \cap D(0,h')$, for some fixed $\delta_{1} > 0$.
Again from the estimates (\ref{|omega_k_d_p|<}) and following the same steps as in (\ref{I_1_exp_small_order_k}), we get that
\begin{equation}
I_{2} \leq \frac{2k\varpi_{\mathfrak{d}_{p}}}{(2\pi)^{1/2}} \frac{|\epsilon|^{k-1}}{(\beta - \beta')
\delta_{2}k(\frac{\rho}{2})^{k-1}} \exp( -\delta_{2} \frac{(\rho/2)^k}{|\epsilon|^k} ) \label{I_2_exp_small_order_k}
\end{equation}
for all $t \in \mathcal{T} \cap D(0,h')$ and $|\mathrm{Im}(z)| \leq \beta'$ with
$|t| < (\frac{\delta_{1}}{\delta_{2} + \nu})^{1/k}$, for some $\delta_{2}>0$, for all
$\epsilon \in \mathcal{E}_{p} \cap \mathcal{E}_{p+1}$.\medskip

Finally, we give upper bound estimates for the integral
$$
I_{3} = \left| \frac{k}{(2\pi)^{1/2}}\int_{-\infty}^{+\infty}
\int_{C_{\rho/2,\gamma_{p},\gamma_{p+1}}}
\omega_{k}(u,m,\epsilon) e^{-(\frac{u}{\epsilon t})^{k}} e^{izm} \frac{du}{u} dm \right|.
$$
By construction, the arc of circle $C_{\rho/2,\gamma_{p},\gamma_{p+1}}$ is chosen in such a way that
$\cos(k(\theta - \mathrm{arg}(\epsilon t))) \geq \delta_{1}$, for all $\theta \in [\gamma_{p},\gamma_{p+1}]$ (if
$\gamma_{p} < \gamma_{p+1}$), $\theta \in [\gamma_{p+1},\gamma_{p}]$ (if
$\gamma_{p+1} < \gamma_{p}$), for all $t \in \mathcal{T}$, all $\epsilon \in \mathcal{E}_{p} \cap \mathcal{E}_{p+1}$, for some
fixed $\delta_{1}>0$. Bearing in mind (\ref{|omega_k_d_p|<}) and (\ref{x_m_exp_x<}), we get that
\begin{multline}
I_{3} \leq \frac{k}{(2\pi)^{1/2}} \int_{-\infty}^{+\infty}  \left| \int_{\gamma_{p}}^{\gamma_{p+1}} \right.
\max_{0 \leq p \leq \varsigma - 1} \varpi_{\mathfrak{d}_p} (1+|m|)^{-\mu} e^{-\beta|m|}
\frac{ \frac{\rho/2}{|\epsilon|}}{1 + (\frac{\rho/2}{|\epsilon|})^{2k} } \\
\times \exp( \nu (\frac{\rho/2}{|\epsilon|})^{k} )
\exp(-\frac{\cos(k(\theta - \mathrm{arg}(\epsilon t)))}{|\epsilon t|^{k}}(\frac{\rho}{2})^{k})
\left. e^{-m\mathrm{Im}(z)} d\theta \right| dm\\
\leq \frac{k( \max_{0 \leq p \leq \varsigma - 1} \varpi_{\mathfrak{d}_p})}{(2\pi)^{1/2}} \int_{-\infty}^{+\infty}
e^{-(\beta - \beta')|m|} dm \times
|\gamma_{p} - \gamma_{p+1}| \frac{\rho/2}{|\epsilon|}
\exp( -\frac{( \frac{\delta_1}{|t|^k} - \nu)}{2} (\frac{\rho/2}{|\epsilon|})^{k}) \\
 \times \exp( -\frac{( \frac{\delta_1}{|t|^k} - \nu)}{2} (\frac{\rho/2}{|\epsilon|})^{k})\\
\leq \frac{2k (\max_{0 \leq p \leq \varsigma - 1} \varpi_{\mathfrak{d}_p})|\gamma_{p} - \gamma_{p+1}|}{(2\pi)^{1/2}(\beta - \beta')}
\sup_{x \geq 0} x^{1/k}e^{-(\frac{\delta_1}{|t|^k} - \nu)x} \times
\exp( -\frac{( \frac{\delta_1}{|t|^k} - \nu)}{2} (\frac{\rho/2}{|\epsilon|})^{k})\\
\leq \frac{2k (\max_{0 \leq p \leq \varsigma-1}
\varpi_{\mathfrak{d}_p})|\gamma_{p} - \gamma_{p+1}|}{(2\pi)^{1/2}(\beta - \beta')} (\frac{1/k}{\delta_2})^{1/k}
e^{-1/k} \exp( -\frac{\delta_{2}}{2} (\frac{\rho/2}{|\epsilon|})^{k}) \label{I_3_exp_small_order_k}
\end{multline}
for all $t \in \mathcal{T} \cap D(0,h')$ and $|\mathrm{Im}(z)| \leq \beta'$ with
$|t| < (\frac{\delta_{1}}{\delta_{2} + \nu})^{1/k}$, for some $\delta_{2}>0$, for all
$\epsilon \in \mathcal{E}_{p} \cap \mathcal{E}_{p+1}$.\medskip

Finally, gathering the three above inequalities (\ref{I_1_exp_small_order_k}), (\ref{I_2_exp_small_order_k}) and
(\ref{I_3_exp_small_order_k}), we deduce from the decomposition (\ref{difference_u_p_decomposition}) that
\begin{multline*}
|u_{p+1}(t,z,\epsilon) - u_{p}(t,z,\epsilon)| \leq
 \frac{2k(\varpi_{\mathfrak{d}_{p}} + \varpi_{\mathfrak{d}_{p+1}})}{(2\pi)^{1/2}} \frac{|\epsilon|^{k-1}}{(\beta - \beta')
\delta_{2}k(\frac{\rho}{2})^{k-1}} \exp( -\delta_{2} \frac{(\rho/2)^k}{|\epsilon|^k} )\\
+  \frac{2k (\max_{0 \leq p \leq \varsigma-1}
\varpi_{\mathfrak{d}_p})|\gamma_{p} - \gamma_{p+1}|}{(2\pi)^{1/2}(\beta - \beta')} (\frac{1/k}{\delta_2})^{1/k}
e^{-1/k} \exp( -\frac{\delta_{2}}{2} (\frac{\rho/2}{|\epsilon|})^{k})
\end{multline*}
for all $t \in \mathcal{T} \cap D(0,h')$ and $|\mathrm{Im}(z)| \leq \beta'$ with
$|t| < (\frac{\delta_{1}}{\delta_{2} + \nu})^{1/k}$, for some $\delta_{2}>0$, for all
$\epsilon \in \mathcal{E}_{p} \cap \mathcal{E}_{p+1}$. Therefore, the inequality
(\ref{exp_small_difference_u_p}) holds.
\end{proof}

\section{Existence of $k-$summable formal series in the complex parameter of the initial value problem}

\subsection{$k-$Summable formal series and Ramis-Sibuya Theorem}

We recall the definition of $k-$Borel summability of formal series with coefficients in a Banach space,
see \cite{ba}.

\begin{defin} Let $k \geq 1$ be an integer. A formal series
$$\hat{X}(\epsilon) = \sum_{j=0}^{\infty}  \frac{ a_{j} }{ j! } \epsilon^{j} \in \mathbb{F}[[\epsilon]]$$
with coefficients in a Banach space $( \mathbb{F}, ||.||_{\mathbb{F}} )$ is said to be $k-$summable
with respect to $\epsilon$ in the direction $d \in \mathbb{R}$ if \medskip

{\bf i)} there exists $\rho \in \mathbb{R}_{+}$ such that the following formal series, called formal
Borel transform of $\hat{X}$ of order $k$ 
$$ \mathcal{B}_{k}(\hat{X})(\tau) = \sum_{j=0}^{\infty} \frac{ a_{j} \tau^{j}  }{ j!\Gamma(1 + \frac{j}{k}) } \in \mathbb{F}[[\tau]],$$
is absolutely convergent for $|\tau| < \rho$, \medskip

{\bf ii)} there exists $\delta > 0$ such that the series $\mathcal{B}_{k}(\hat{X})(\tau)$ can be analytically continued with
respect to $\tau$ in a sector
$S_{d,\delta} = \{ \tau \in \mathbb{C}^{\ast} : |d - \mathrm{arg}(\tau) | < \delta \} $. Moreover, there exist $C >0$, and $K >0$
such that
$$ ||\mathcal{B}(\hat{X})(\tau)||_{\mathbb{F}}
\leq C e^{ K|\tau|^{k}} $$
for all $\tau \in S_{d, \delta}$.
\end{defin}
If this is so, the vector valued Laplace transform of order $k$ of $\mathcal{B}_{k}(\hat{X})(\tau)$ in the direction $d$ is defined by
$$ \mathcal{L}^{d}_{k}(\mathcal{B}_{k}(\hat{X}))(\epsilon) = \epsilon^{-k} \int_{L_{\gamma}}
\mathcal{B}_{k}(\hat{X})(u) e^{ - ( u/\epsilon )^{k} } ku^{k-1}du,$$
along a half-line $L_{\gamma} = \mathbb{R}_{+}e^{i\gamma} \subset S_{d,\delta} \cup \{ 0 \}$, where $\gamma$ depends on
$\epsilon$ and is chosen in such a way that
$\cos(k(\gamma - \mathrm{arg}(\epsilon))) \geq \delta_{1} > 0$, for some fixed $\delta_{1}$, for all
$\epsilon$ in a sector
$$ S_{d,\theta,R^{1/k}} = \{ \epsilon \in \mathbb{C}^{\ast} : |\epsilon| < R^{1/k} \ \ , \ \ |d - \mathrm{arg}(\epsilon) |
< \theta/2 \},$$
where $\frac{\pi}{k} < \theta < \frac{\pi}{k} + 2\delta$ and $0 < R < \delta_{1}/K$. The
function $\mathcal{L}^{d}_{k}(\mathcal{B}_{k}(\hat{X}))(\epsilon)$
is called the $k-$sum of the formal series $\hat{X}(t)$ in the direction $d$. It is bounded and holomorphic on the sector
$S_{d,\theta,R^{1/k}}$ and has the formal series $\hat{X}(\epsilon)$ as Gevrey asymptotic
expansion of order $1/k$ with respect to $\epsilon$ on $S_{d,\theta,R^{1/k}}$. This means that for all
$\frac{\pi}{k} < \theta_{1} < \theta$, there exist $C,M > 0$ such that
$$ ||\mathcal{L}^{d}_{k}(\mathcal{B}_{k}(\hat{X}))(\epsilon) - \sum_{p=0}^{n-1}
\frac{a_p}{p!} \epsilon^{p}||_{\mathbb{F}} \leq CM^{n}\Gamma(1+ \frac{n}{k})|\epsilon|^{n} $$
for all $n \geq 1$, all $\epsilon \in S_{d,\theta_{1},R^{1/k}}$.\medskip

Now, we state a cohomological criterion for $k-$summability of formal series with coefficients in Banach spaces (see
\cite{ba2}, p. 121 or \cite{hssi}, Lemma XI-2-6) which is known as the Ramis-Sibuya theorem in the literature. This result
is a crucial tool in the proof of our main result (Theorem 2).\medskip

\noindent {\bf Theorem (RS)} {\it Let $(\mathbb{F},||.||_{\mathbb{F}})$ be a Banach space over $\mathbb{C}$ and
$\{ \mathcal{E}_{p} \}_{0 \leq i \leq \varsigma-1}$ be a good covering in $\mathbb{C}^{\ast}$. For all
$0 \leq p \leq \varsigma - 1$, let $G_{p}$ be a holomorphic function from $\mathcal{E}_{p}$ into
the Banach space $(\mathbb{F},||.||_{\mathbb{F}})$ and let the cocycle $\Theta_{p}(\epsilon) = G_{p+1}(\epsilon) - G_{p}(\epsilon)$
be a holomorphic function from the sector $Z_{p} = \mathcal{E}_{p+1} \cap \mathcal{E}_{p}$ into $\mathbb{E}$
(with the convention that $\mathcal{E}_{\varsigma} = \mathcal{E}_{0}$ and $G_{\varsigma} = G_{0}$).
We make the following assumptions.\medskip

\noindent {\bf 1)} The functions $G_{p}(\epsilon)$ are bounded as $\epsilon \in \mathcal{E}_{p}$ tends to the origin
in $\mathbb{C}$, for all $0 \leq p \leq \varsigma - 1$.\medskip

\noindent {\bf 2)} The functions $\Theta_{p}(\epsilon)$ are exponentially flat of order $1/k$ on $Z_{p}$, for all
$0 \leq p \leq \varsigma-1$. This means that there exist constants $C_{p},A_{p}>0$ such that
$$ ||\Theta_{p}(\epsilon)||_{\mathbb{F}} \leq C_{p}e^{-A_{p}/|\epsilon|^{k}} $$
for all $\epsilon \in Z_{p}$, all $0 \leq p \leq \varsigma-1$.\medskip

Then, for all $0 \leq p \leq \nu - 1$, the functions $G_{p}(\epsilon)$ are the $k-$sums on $\mathcal{E}_{p}$ of a
common $k-$summable formal series $\hat{G}(\epsilon) \in \mathbb{F}[[\epsilon]]$.}

\subsection{Construction of $k-$summable formal series in the complex parameter of the initial value problem}

In this subsection, we establish the second main result of our work, namely the existence of a formal power series
in the parameter $\epsilon$ whose coefficients are bounded holomorphic
functions on the product of a sector with small radius centered at 0 and a strip in $\mathbb{C}^2$, that is a solution
of the equation (\ref{ICP_main}) and which is the common Gevrey asymptotic expansion of order $1/k$ of the actual solutions
$u_{p}(t,z,\epsilon)$ of (\ref{ICP_main}) constructed in Theorem 1.\medskip

\noindent The second main result of this work can be stated as follows.

\begin{theo} Let us assume that the hypotheses of Theorem 1 hold. Then, there exists a formal power series 
$$ \hat{u}(t,z,\epsilon) = \sum_{m \geq 0} h_{m}(t,z) \epsilon^{m}/m! $$
solution of the equation (\ref{ICP_main}), whose coefficients $h_{m}(t,z)$ belong to the Banach space $\mathbb{F}$ of
bounded holomorphic functions on
$(\mathcal{T} \cap D(0,h'')) \times H_{\beta'}$ equipped with supremum norm, where $h''>0$ is constructed in Theorem 1, and
such that the functions $u_{p}(t,z,\epsilon)$ defined in Theorem 1,
seen as holomorphic functions from $\mathcal{E}_{p}$ into $\mathbb{F}$, are its $k-$sums on the sectors $\mathcal{E}_{p}$, for
all $0 \leq p \leq \varsigma-1$. In other words, for all $0 \leq p \leq \varsigma-1$, there exist two constants $C_{p},M_{p}>0$ such that
\begin{equation}
\sup_{t \in \mathcal{T} \cap D(0,h''), z \in H_{\beta'}}
|u_{p}(t,z,\epsilon) - \sum_{m=0}^{n-1} h_{m}(t,z) \frac{\epsilon^m}{m!}| \leq C_{p}M_{p}^{n}\Gamma(1+\frac{n}{k})|\epsilon|^{n}
\end{equation} 
for all $n \geq 1$, all $\epsilon \in \mathcal{E}_{p}$.
\end{theo}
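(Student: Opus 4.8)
The plan is to apply the Ramis-Sibuya theorem (Theorem (RS)) to the family $\{ u_{p} \}_{0 \leq p \leq \varsigma-1}$ viewed as holomorphic functions from each $\mathcal{E}_{p}$ into the Banach space $\mathbb{F}$ of bounded holomorphic functions on $(\mathcal{T} \cap D(0,h'')) \times H_{\beta'}$ equipped with the supremum norm. The two hypotheses of the cohomological criterion must be checked, and once they hold the conclusion is immediate: the $u_{p}$ are the $k$-sums on $\mathcal{E}_{p}$ of a common $k$-summable formal series, and the desired Gevrey asymptotic estimate follows from the definition of $k$-summability recalled in Definition 6.

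First I would verify hypothesis \textbf{1)} of Theorem (RS), the boundedness of each $G_{p} := u_{p}$ near the origin. This is essentially already established in Theorem 1, where $u_{p}(t,z,\epsilon)$ is shown to be a bounded holomorphic function on $(\mathcal{T} \cap D(0,h')) \times H_{\beta'} \times \mathcal{E}_{p}$; restricting the radius in $t$ to $h''$ and reading the bound uniformly in $\epsilon \in \mathcal{E}_{p}$ gives that $\epsilon \mapsto u_{p}(\cdot,\cdot,\epsilon)$ is a bounded $\mathbb{F}$-valued function as $\epsilon \to 0$. Next I would verify hypothesis \textbf{2)}, the exponential flatness of the cocycle $\Theta_{p}(\epsilon) = u_{p+1}(\cdot,\cdot,\epsilon) - u_{p}(\cdot,\cdot,\epsilon)$ on $Z_{p} = \mathcal{E}_{p+1} \cap \mathcal{E}_{p}$. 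This is precisely the content of estimate (\ref{exp_small_difference_u_p}) in Theorem 1: taking the supremum norm over $(t,z)$ one reads
$$ ||\Theta_{p}(\epsilon)||_{\mathbb{F}} = \sup_{t \in \mathcal{T} \cap D(0,h''), z \in H_{\beta'}} |u_{p+1}(t,z,\epsilon) - u_{p}(t,z,\epsilon)| \leq K_{p} e^{-M_{p}/|\epsilon|^{k}} $$
for all $\epsilon \in Z_{p}$, which is exactly exponential flatness of order $1/k$ with constants $C_{p} = K_{p}$ and $A_{p} = M_{p}$.

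With both hypotheses confirmed, Theorem (RS) yields a formal series $\hat{u}(t,z,\epsilon) = \sum_{m \geq 0} h_{m}(t,z)\,\epsilon^{m}/m! \in \mathbb{F}[[\epsilon]]$ whose $k$-sum on each $\mathcal{E}_{p}$ is $u_{p}$. The Gevrey asymptotic bound then follows directly from the last paragraph of Definition 6, which states that the $k$-sum admits its formal series as Gevrey asymptotic expansion of order $1/k$; this gives the claimed inequality with suitable constants $C_{p}, M_{p}>0$. It remains to argue that $\hat{u}$ solves the equation (\ref{ICP_main}). I would obtain this by a standard uniqueness-of-asymptotic-expansion argument: each $u_{p}$ satisfies (\ref{ICP_main}), and since the coefficients $h_{m}(t,z)$ are recovered as the common asymptotic coefficients of the $u_{p}$ (and hence are independent of $p$), plugging $\hat{u}$ into (\ref{ICP_main}) and identifying coefficients of $\epsilon^{m}$ produces a triangular recursion that the $h_{m}$ must satisfy. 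The main obstacle, though a mild one given the preparatory work, is confirming that the formal solution produced by Ramis-Sibuya genuinely annihilates the differential operator of (\ref{ICP_main}) at the level of formal power series in $\epsilon$; this requires that the asymptotic expansion commute with the derivations $\partial_{t}, \partial_{z}$ appearing in $Q, Q_1, Q_2, R_l$ and with the quadratic nonlinearity, which is justified because $\mathbb{F}$ is a differential Banach algebra under $(\partial_t, \partial_z)$ and the set of functions admitting a Gevrey-$1/k$ expansion on a sector is stable under sum, product and derivation.
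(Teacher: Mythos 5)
Your proposal is correct and follows the paper's own strategy for the main part: both verify hypotheses 1) and 2) of Theorem (RS) exactly as you do, with 2) read off directly from (\ref{exp_small_difference_u_p}), and both then conclude that the $u_{p}$ are the $k$-sums of a common formal series $\hat{u} \in \mathbb{F}[[\epsilon]]$. The only place where you diverge is the final step, showing that $\hat{u}$ actually solves (\ref{ICP_main}). You argue via stability of Gevrey asymptotic classes under sums, products and derivations, so that the equation passes to the level of formal series by uniqueness of asymptotic expansions. The paper instead exploits a concrete consequence of $k$-summability, namely $\lim_{\epsilon \to 0,\, \epsilon \in \mathcal{E}_{p}} \sup_{t,z} |\partial_{\epsilon}^{m} u_{p}(t,z,\epsilon) - h_{m}(t,z)| = 0$ (see (\ref{limit_deriv_order_m_of_up_epsilon})), applies $\partial_{\epsilon}^{m}$ to the equation satisfied by $u_{p}$ via the Leibniz rule (equation (\ref{ICP_main_derivative_epsilon})), and lets $\epsilon \to 0$ to obtain the explicit recursion (\ref{rec_h_m}) for the $h_{m}$, from which one checks directly that $\hat{u}$ is a formal solution. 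The two routes are essentially equivalent in content; the paper's version has the advantage of producing the recursion for the coefficients explicitly and of sidestepping the point you should be slightly more careful about, namely that $\partial_{t}$ and $\partial_{z}$ are unbounded on $\mathbb{F}$, so the claim that the asymptotic expansion commutes with these derivations is not automatic from the Banach-algebra structure alone and needs a word of justification (e.g.\ Cauchy estimates on a slightly shrunk domain, or the limit statement on $\epsilon$-derivatives that the paper uses).
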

\begin{proof} We consider the family of functions $u_{p}(t,z,\epsilon)$, $0 \leq p \leq \varsigma-1$ constructed in Theorem 1.
For all $0 \leq p \leq \varsigma-1$, we define $G_{p}(\epsilon) := (t,z) \mapsto u_{p}(t,z,\epsilon)$, which is by construction a
holomorphic and bounded function from $\mathcal{E}_{p}$ into the Banach space $\mathbb{F}$ of bounded holomorphic functions on
$(\mathcal{T} \cap D(0,h'')) \times H_{\beta'}$ equipped with the supremum norm, where $\mathcal{T}$ is introduced in
Definition 5, $h''>0$ is set in Theorem 1 and
$\beta'>0$ is the width of the strip $H_{\beta'}$ on which the coefficients $c_{0}$ and $f$ are defined with respect to
$z$ (see (\ref{defin_c_0_f})). Bearing in mind the estimates (\ref{exp_small_difference_u_p}), we see that the cocycle
$\Theta_{p}(\epsilon) = G_{p+1}(\epsilon) - G_{p}(\epsilon)$ is exponentially flat of order $k$ on
$Z_{p} = \mathcal{E}_{p} \cap \mathcal{E}_{p+1}$, for any $0 \leq p \leq \varsigma-1$.

From Theorem (RS) stated above, there exists a formal power series $\hat{G}(\epsilon) \in \mathbb{F}[[\epsilon]]$ such that the
functions $G_{p}(\epsilon)$ are the $k-$sums on $\mathcal{E}_{p}$ of $\hat{G}(\epsilon)$ as $\mathbb{F}-$valued functions, for all
$0 \leq p \leq \varsigma-1$. We set
$$ \hat{G}(\epsilon ) = \sum_{m \geq 0} h_{m}(t,z) \epsilon^{m}/m! =: \hat{u}(t,z,\epsilon). $$
It remains to show that the formal series $\hat{u}(t,z,\epsilon)$ satisfies the main equation (\ref{ICP_main}). Since the functions
$G_{p}(\epsilon)$ are the $k-$sums of $\hat{G}(\epsilon)$, we have in particular that
\begin{equation}
\lim_{\epsilon \rightarrow 0, \epsilon \in \mathcal{E}_{p}}
\sup_{t \in \mathcal{T} \cap D(0,h''),z \in H_{\beta'}}|\partial_{\epsilon}^{m}u_{p}(t,z,\epsilon) - h_{m}(t,z)| = 0
\label{limit_deriv_order_m_of_up_epsilon}
\end{equation}
for all $0 \leq p \leq \varsigma-1$, all $m \geq 0$. Now, we choose some $p \in \{ 0, \ldots, \varsigma-1 \}$. By construction, the function
$u_{p}(t,z,\epsilon)$ is a solution of (\ref{ICP_main}). We take the derivative of order $m \geq 0$ with respect to $\epsilon$ on the
left and right handside of the equation (\ref{ICP_main}). From the Leibniz rule, we deduce that
$\partial_{\epsilon}^{m}u_{p}(t,z,\epsilon)$ verifies the following equation
\begin{multline}
Q(\partial_{z})(\partial_{t}\partial_{\epsilon}^{m}u_{p}(t,z,\epsilon)) = \sum_{m_{1}+m_{2}=m}
\frac{m!}{m_{1}!m_{2}!} (Q_{1}(\partial_{z})\partial_{\epsilon}^{m_1}u_{p}(t,z,\epsilon))
(Q_{2}(\partial_{z})\partial_{\epsilon}^{m_2}u_{p}(t,z,\epsilon))\\
+ \sum_{l=1}^{D} \left( \sum_{m_{1}+m_{2}=m} \frac{m!}{m_{1}!m_{2}!}
\partial_{\epsilon}^{m_1}(\epsilon^{\Delta_l})t^{d_l}\partial_{t}^{\delta_l}
R_{l}(\partial_{z})\partial_{\epsilon}^{m_2}u_{p}(t,z,\epsilon) \right) \\
+ \sum_{m_{1}+m_{2}=m} \frac{m!}{m_{1}!m_{2}!} \partial_{\epsilon}^{m_1}c_{0}(t,z,\epsilon)
R_{0}(\partial_{z})\partial_{\epsilon}^{m_2}u_{p}(t,z,\epsilon) + \partial_{\epsilon}^{m}f(t,z,\epsilon)
\label{ICP_main_derivative_epsilon}
\end{multline}
for all $m \geq 0$, all $(t,z,\epsilon) \in (\mathcal{T} \cap D(0,h'')) \times H_{\beta'} \times \mathcal{E}_{p}$. If we let $\epsilon$ tend
to zero in (\ref{ICP_main_derivative_epsilon}) and if we use (\ref{limit_deriv_order_m_of_up_epsilon}), we get the recursion
\begin{multline}
Q(\partial_{z})(\partial_{t}h_{m}(t,z)) = \sum_{m_{1}+m_{2}=m} \frac{m!}{m_{1}!m_{2}!}(Q_{1}(\partial_{z})h_{m_1}(t,z))
(Q_{2}(\partial_{z})h_{m_2}(t,z))\\
+ \sum_{l=1}^{D} \frac{m!}{(m-\Delta_{l})!}t^{d_l}\partial_{t}^{\delta_l}R_{l}(\partial_{z})h_{m-\Delta_{l}}(t,z)\\
+ \sum_{m_{1}+m_{2}=m} \frac{m!}{m_{1}!m_{2}!}(\partial_{\epsilon}^{m_1}c_{0})(t,z,0)R_{0}(\partial_{z})h_{m_2}(t,z) +
(\partial_{\epsilon}^{m}f)(t,z,0) \label{rec_h_m}
\end{multline}
for all $m \geq \max_{1 \leq l \leq D} \Delta_{l}$, all $(t,z) \in (\mathcal{T} \cap D(0,h'')) \times H_{\beta'}$. Since the functions
$c_{0}(t,z,\epsilon)$ and $f(t,z,\epsilon)$ are analytic with respect to $\epsilon$ at 0, we know that
\begin{equation}
c_{0}(t,z,\epsilon) = \sum_{m \geq 0} \frac{(\partial_{\epsilon}^{m}c_{0})(t,z,0)}{m!}\epsilon^{m} \ \ , \ \
f(t,z,\epsilon) = \sum_{m \geq 0} \frac{(\partial_{\epsilon}^{m}f)(t,z,0)}{m!}\epsilon^{m} \label{Taylor_c0_f}
\end{equation}
for all $\epsilon \in D(0,\epsilon_{0})$, all $z \in H_{\beta'}$. On other hand, one can check by direct inspection from the recursion
(\ref{rec_h_m}) and the expansions (\ref{Taylor_c0_f}) that the formal series
$\hat{u}(t,z,\epsilon) = \sum_{m \geq 0} h_{m}(t,z)\epsilon^{m}/m!$ solves the equation (\ref{ICP_main}).
\end{proof}


\begin{thebibliography}{99}
\bibitem{ba} W. Balser, \emph{From divergent power series to analytic functions. Theory and application
of multisummable power series.}
Lecture Notes in Mathematics, 1582. Springer-Verlag, Berlin, 1994. x+108 pp.
\bibitem{ba2} W. Balser, \emph{Formal power series and linear systems of meromorphic ordinary differential equations.}
Universitext. Springer-Verlag, New York, 2000. xviii+299 pp.
\bibitem{ba3} W. Balser, \emph{Summability of power series that are formal solutions of partial differential equations
with constant coefficients.} J. Math. Sci. (N. Y.) 124 (2004), no. 4, 5085--5097.
\bibitem{ba4} W. Balser, \emph{Multisummability of formal power series solutions of partial differential
equations with constant coefficients.} J. Differential Equations 201 (2004), no. 1, 63--74.
\bibitem{balo} W. Balser, M. Loday-Richaud, \emph{Summability of solutions of the heat equation with
inhomogeneous thermal conductivity in two variables.} Adv. Dyn. Syst. Appl. 4 (2009), no. 2, 159--177.
\bibitem{clgr} T. Claeys, T. Grava, \emph{Universality of the break-up profile for the KdV
equation in the small dispersion limit using the Riemann-Hilbert approach.} Comm. Math. Phys. 286 (2009), no. 3, 979--1009.
\bibitem{copata} O. Costin, H. Park, Y. Takei, \emph{Borel summability of the heat equation with variable
coefficients.} J. Differential Equations 252 (2012), no. 4, 3076--3092.
 \bibitem{cota} O. Costin, S. Tanveer, \emph{Existence and uniqueness for a class of nonlinear higher-order partial
differential equations in the complex plane.} Comm. Pure Appl. Math. 53 (2000), no. 9, 1092--1117.
\bibitem{cota2} O. Costin, S. Tanveer, \emph{Short time existence and Borel summability in
the Navier-Stokes equation in $\mathbb{R}^{3}$}, Comm. Partial Differential Equations 34 (2009), no. 7-9, 785--817.
\bibitem{du} B. Dubrovin, \emph{On Hamiltonian perturbations of hyperbolic systems of conservation laws. II.
Universality of critical behaviour.} Comm. Math. Phys. 267 (2006), no. 1, 117--139.
\bibitem{duel} B. Dubrovin, M. Elaeva, \emph{On the critical behavior in nonlinear evolutionary PDEs with small
viscosity.} Russ. J. Math. Phys. 19 (2012), no. 4, 449--460.
\bibitem{fol} G. Folland, \emph{Introduction to partial differential equations. Second edition.}
Princeton University Press, Princeton, NJ, 1995. xii+324 pp.
\bibitem{hi} S. Hirose, \emph{On a WKB theoretic transformation for a completely integrable system near a degenerate point
where two turning points coalesce}, preprint RIMS (2013).
\bibitem{hssi} P. Hsieh, Y. Sibuya, \emph{Basic theory of ordinary differential equations}. Universitext. Springer-Verlag, New York,
1999.
\bibitem{ko1} T. Koike, \emph{On the Hamiltonian structures of the second and the fourth Painlev\'e hierarchies, and
the degenerate Garnier systems.} Algebraic, analytic and geometric aspects of complex differential equations
and their deformations. Painlevé hierarchies, 99--127, RIMS K\^{o}ky\^{u}roku Bessatsu, B2, Res. Inst. Math. Sci. (RIMS), Kyoto, 2007.
\bibitem{ko2} T. Koike, \emph{On new expressions of the Painlev\'e hierarchies.} Algebraic analysis and the exact WKB analysis for
systems of differential equations, 153--198, RIMS K\^{o}ky\^{u}roku Bessatsu, B5, Res. Inst. Math. Sci. (RIMS), Kyoto, 2008.
\bibitem{lamasa1} A. Lastra, S. Malek, J. Sanz, \emph{On Gevrey solutions of threefold singular nonlinear partial differential
equations.} J. Differential Equations 255 (2013), no. 10, 3205--3232.
\bibitem{lamasa2} A. Lastra, S. Malek, J. Sanz, \emph{Continuous right inverses for the asymptotic Borel map in
ultraholomorphic classes via a Laplace-type transform}, J. Math. Anal. Appl. 396 (2012), no. 2, 724--740.
\bibitem{lumisc} D. Lutz, M. Miyake, R. Sch\"{a}fke, \emph{On the Borel summability of divergent solutions of
the heat equation.} Nagoya Math. J. 154 (1999), 1--29.
\bibitem{ly1} G. Lysik, \emph{Borel summable solutions of the Burgers equation}. Ann. Polon. Math. 95 (2009), no. 2, 187--197.
\bibitem{ly2} G. Lysik, \emph{The Borel summable solutions of heat operators on a real analytic manifold.} J. Math. Anal.
Appl. 410 (2014), no. 1, 117--123.
\bibitem{ma1} S. Malek, \emph{On the summability of formal solutions for doubly singular nonlinear partial differential
equations.} J. Dyn. Control Syst. 18 (2012), no. 1, 45--82.
\bibitem{ma2} S. Malek, \emph{On Gevrey asymptotics for some nonlinear integro-differential equations}.
J. Dyn. Control Syst. 16 (2010), no. 3, 377--406.
\bibitem{mi} S. Michalik, \emph{Summability and fractional linear partial differential equations.}
J. Dyn. Control Syst. 16 (2010), no. 4, 557--584.
\bibitem{man2} T. Mandai, \emph{Existence and nonexistence of null-solutions for some non-Fuchsian partial differential
operators with $T$-dependent coefficients.} Nagoya Math. J. 122 (1991), 115--137.
\bibitem{ou} S. Ouchi, \emph{Multisummability of formal power series solutions of nonlinear partial differential equations in
complex domains}. Asymptot. Anal. 47 (2006), no. 3-4, 187--225.
\bibitem{ru} W. Rudin, \emph{Functional analysis.} Second edition.
International Series in Pure and Applied Mathematics. McGraw-Hill, Inc., New York, 1991.
\bibitem{taya} H. Tahara, H. Yamazawa, \emph{Multisummability of formal solutions to the Cauchy problem for some linear
partial differential equations}, Journal of Differential equations, Volume 255, Issue 10, 15 November 2013, pages 3592--3637.
\end{thebibliography}
\end{document}